\documentclass[reqno]{amsart}
\usepackage{macros/preprintStyle}
\pdfoutput=1 %

\usepackage{amssymb}

\usepackage{bm}
\usepackage{comment}
\usepackage{adjustbox} 

\usepackage{mathtools}

\usepackage{algorithmic}

\usepackage{xspace}

\usepackage{graphicx}
\usepackage{color}
\usepackage{circuitikz}
\usepackage{tikz}
\usetikzlibrary{calc,positioning,shapes}
\usetikzlibrary{patterns,decorations.pathmorphing,decorations.markings}
\usetikzlibrary{external}

\usepackage{pgfplots}
\pgfplotsset{compat=newest}
\usepackage[margin=10pt,font=small,labelfont=bf,labelsep=endash]{caption}
\usepackage{subcaption}

\usetikzlibrary{intersections, backgrounds}
\usetikzlibrary{circuits}
\usetikzlibrary{circuits.ee.IEC}
\usepackage{pgfplots}
\pgfplotsset{compat=newest}

\definecolor{color0}{rgb}{0.12156862745098,0.466666666666667,0.705882352941177}
\definecolor{color1}{rgb}{1,0.498039215686275,0.0549019607843137}
\definecolor{color2}{rgb}{0.172549019607843,0.627450980392157,0.172549019607843}
\definecolor{color3}{rgb}{0.83921568627451,0.152941176470588,0.156862745098039}
\definecolor{color4}{rgb}{0.580392156862745,0.403921568627451,0.741176470588235}
\definecolor{color5}{rgb}{0,0,0}

\definecolor{mycolor1}{rgb}{0.00000,0.44700,0.74100}
\definecolor{mycolor2}{rgb}{0.85000,0.32500,0.09800}
\definecolor{mycolor3}{rgb}{0.92900,0.69400,0.12500}
\definecolor{mycolor4}{rgb}{0.46600,0.67400,0.18800}
\definecolor{mycolor5}{rgb}{0.49400,0.18400,0.55600}

\newcommand{\smoothFunctions}[3][]{\ifthenelse{\equal{#1}{}}{\mathcal{C}^{#2}}{\mathcal{C}_{#1}^{#2}}(#3)}

\newcommand{\dist}[1][]{\ifthenelse{\equal{#1}{}}{\mathbb{D}}{#1_{\mathbb{D}}}}

\DeclareMathOperator{\rank}{rank}

\DeclareMathOperator{\sspan}{span}

\DeclareMathOperator{\tr}{tr}

\renewcommand{\dist}{\mathrm{dist}}

\newcommand{\calP}{\mathcal{P}}

\newcommand{\rdim}{r} 

\newcommand{\N}{\ensuremath\mathbb{N}}

\newcommand{\R}{\ensuremath\mathbb{R}}
\newcommand{\C}{\ensuremath\mathbb{C}}

\newcommand{\ImgUnit}{\mathrm{i}}

\newcommand{\IdxHam}{p}
\newcommand{\IdxDis}{q}
\newcommand{\NrHam}{P}
\newcommand{\NrDis}{Q}

\newcommand{\pDim}{d}

\newcommand{\prmtr}{\mu}
\newcommand{\prmtrSet}{\calP}

\newcommand{\fdim}{N} 
\newcommand{\Lindbladian}{\mathcal{L}} 
\newcommand{\SteadyState}{\rho_{\mathrm{ss}}} 

\newcommand{\SolSet}{\mathcal{M}}

\newcommand{\FHinteraction}{g} 

\title[MOR for parametric Lindbladian]{Model Order Reduction for Parametric Dissipative Quantum Systems}
\author{Bernard Haasdonk${}^\dagger$, Benjamin Stamm${}^\dagger$  \and Zhuoyao Zeng${}^\dagger$}

\address{${}^{\dagger}$ Institute of Applied Analysis and Numerical Simulation, University of Stuttgart, Pfaffenwaldring 57, 70569 Stuttgart, Germany}
\email{\{given\_name.surname\}@mathematik.uni-stuttgart.de}

\date{\today}

\begin{document}

\begin{abstract}
We consider reduced basis approximations 
of non-equilibrium steady states 
of parameter-dependent Lindbladians, 
whose ambient matrix space grows 
exponentially with system size. 
We give a unified structural view 
of the problem through its characterizations
as an eigenvalue problem or
as a trace-constrained source problem. 
Building on these characterizations, 
we propose a residual-driven reduced basis framework 
for both formulations. 
Under the assumption of a unique steady state, 
we derive residual-based state-error estimates, 
and establish (sub-)exponential convergence 
of the greedy algorithms
under real-analytic parameter dependence of the problem data. 
Numerical experiments on boundary-driven Fermi--Hubbard 
and dissipative XYZ chains support the analysis and 
suggest that the trace-constrained greedy method 
offers the best overall balance of accuracy 
and computational cost.
\end{abstract}

\maketitle
{\footnotesize \textsc{Keywords:} Parametric eigenvalue problem, eigenspace approximation, model order reduction, reduced basis method, greedy algorithm, a priori convergence, subspace procedure, quantum spin systems.} 

{\footnotesize \textsc{AMS subject classification: } Primary 41A63, 65F15; Secondary 65N25, 65F50, 65K10, 81P68.}


\section{Introduction}\label{sec:intro}

We consider finite-dimensional open quantum systems with underlying complex Hilbert-space of dimension~$\fdim$. 
Their states are represented by \emph{density matrices} 
$\rho\in\C^{\fdim\times\fdim}$,
i.e., $\rho^\dagger=\rho,$
$\tr\rho=1$ and 
$\rho\succeq0.$
Here, $\rho\succeq0$ means that $\rho$ is positive semidefinite (psd).

Interactions with an external environment generally induce non-unitary dynamics. 
Following \cite{GoriniKossakowskiSudarshan1976,Lindblad1976}, 
the evolution under  the time-independent Markovian setting is governed 
by the \emph{Lindblad master equation} $\dot{\rho}(t)
\,=\,
\Lindbladian\rho(t),$
where the operator $\Lindbladian$ is called a \emph{Lindbladian}. 
We are concerned with its \emph{non-equilibrium steady states} $\SteadyState$, 
i.e., density matrices  satisfying $\Lindbladian\SteadyState\,=\,0.$
These steady states describe stationary configurations and, 
whenever the dynamics converges to a steady state, 
determine the long-time values of physical observables. 
Hence, steady state computation is a central task for open quantum systems, 
including driven-dissipative many-body models \cite{WeimerKshetrimayumOrus2021}.

We study a parametric family of Lindbladians $\bigl(\Lindbladian(\prmtr)\bigr)_{\prmtr\in\prmtrSet}$, 
where $\prmtrSet\subseteq\R^\pDim$ is compact. 
Repeated steady-state computations arise, 
for example, in phase-diagram exploration, 
sensitivity analysis, 
and the evaluation of parameter-dependent observables. 
For example, consider a many-body system with $L$ sites, 
each associated with a local Hilbert space $\C^s$. 
Note that $s=2$
for a chain of qubits. 
The full Hilbert space $(\C^s)^{\otimes L}$ then has dimension $\fdim=s^L$, 
and a density matrix acting on it is an $\fdim\times\fdim$ Hermitian matrix. 
The real vector space of such Hermitian matrices has dimension 
$\fdim^2=s^{2L}$, and imposing the unit-trace condition leaves 
$\fdim^2-1$ independent real parameters. 
This exponential growth makes repeated high-fidelity solves costly 
and motivates a reduced model for the associated many-query-many-body problem.

Reduced basis methods (RBM) provide a natural model order reduction (MOR) framework 
for such many-query problems \cite{HesthavenRozzaStamm2016, Haasdonk2017ReducedBasis}. 
They approximate the parametric solution set in a low-dimensional space generated by selected high-fidelity solutions, 
or \emph{snapshots}, 
and separate the expensive construction of this space from subsequent low-dimensional solves. 
In a greedy construction, new snapshots are usually selected by maximizing an error indicator over a training set. 
If the parametric Lindbladian admits the affine decomposition \eqref{eq:Lindbladian_affine_decomposition}, 
the quantities required for this sweep can be assembled from precomputed reduced components; 
see \Cref{subsec:reduced_operator_assembly}.

Our goal is to construct low-dimensional RB spaces by residual-driven greedy sampling, 
enabling fast repeated approximation of the parametric steady states, 
and to establish error estimates and convergence guarantees for the resulting RB procedures.
There are two natural ways to formulate the reduced steady-state approximation problem. 
One may regard a steady state as an eigenvector 
associated to the eigenvalue $0$ of the Lindbladian, 
or determine it directly by minimizing the Lindbladian residual subject to the unit-trace constraint. 
To make these two viewpoints concrete, 
for selected snapshot parameters $\prmtr_1,\ldots,\prmtr_J$, 
let
$V 
\, \coloneqq \, 
\operatorname{span}_{\R}
\left\{
\SteadyState(\prmtr_1),\ldots,\SteadyState(\prmtr_J)
\right\}
$
be the resulting reduced space
in the real vector space of Hermitian matrices,
equipped with the Frobenius norm~$\|\cdot\|$. 
The first viewpoint 
defines a norm-constrained residual minimization problem
\begin{equation} \label{eq:abstract_EVP}
\min_{X\in V}
\| \Lindbladian(\prmtr) X \|^2
\qquad
\text{subject to}
\qquad
\|X\|=1,
\end{equation}
followed by trace normalization. 
\eqref{eq:abstract_EVP} can be interpreted as an eigenvalue problem 
for the psd operator $\Lindbladian(\prmtr)^\dagger \Lindbladian(\prmtr)$
restricted to $V$. 
The second determines the approximation directly 
through the trace-constrained residual minimization problem
\begin{equation}
\min_{X\in V}
\| \Lindbladian(\prmtr)X \|^2
\qquad
\text{subject to}
\qquad
\tr X=1.
\label{eq:abstract_constraint_res_min}
\end{equation}

Hermiticity, as a linear constraint, is inherited from the snapshot solution
space.  The trace-constrained formulation enforces unit trace directly,
whereas the eigenvector approximation is normalized to unit trace whenever
its trace is nonzero.
However, positive semidefiniteness is not automatic and may be imposed by optional post-processing.
Despite their different constraints, 
both formulations fit into a common residual-minimization framework. 
This perspective allows us to develop the corresponding greedy procedures and analyze their approximation errors and convergence in parallel. 
Before that, 
we first review the relevant literature.

\subsection{Related work} \label{subsec:lit_review}

Early reduced basis (RB) techniques were developed 
for parameter-dependent structural problems \cite{NoorPeters1980}. 
The certified RB framework for parametrized PDEs combines snapshot approximation, 
a-posteriori error estimation, 
greedy sampling, 
and offline--online decomposition \cite{PrudhommeEtAl2002JFE,PruRVP02,MadayPateraTurinici2002,RozzaHuynhPatera2008}; 
see \cite{HesthavenRozzaStamm2016,QuarteroniManzoniNegri2016,Haasdonk2017ReducedBasis} for systematic presentations. 
The relation between greedy approximation and Kolmogorov widths underlying our convergence analysis 
was developed in \cite{BinevEtAl2011,BuffaEtAl2012,DeVorePetrovaWojtaszczyk2013}.

RB techniques have also been developed for parametric eigenvalue problems, 
including certified bounds for symmetric positive definite problems \cite{MaYOPR00}, 
asymptotic a-posteriori estimation for the first elliptic eigenpair \cite{FumagalliManzoniParoliniVerani2016}, 
simultaneous approximation of several eigenvalues and parameter-dependent multiplicities \cite{HWD17}, 
subspace acceleration and guaranteed bounds for the smallest eigenvalue \cite{SirK16}, 
clusters and eigenvalue intersections \cite{BHP24}, 
and non-symmetric problems \cite{ConjungoTaumhasEtAl2024}. 
Recently, a certified construction for the smallest eigenspace 
of general parametric Hermitian matrices is given in \cite{ManucciStammZeng2026},
and a general local higher order RB construction is studied in \cite{StammZeng2026}. 
In quantum many-body physics, 
related snapshot-projection ideas appear as {eigenvector continuation} \cite{FrameEtAl2018,DuguetEtAl2024} 
and in RB surrogates for parametric spin Hamiltonians \cite{Herbst2022,Brehmer2023}.

For open quantum systems, 
established methods for a single steady state or trajectory include sparse iterative solvers \cite{NationEtAl2015}, 
matrix-product representations \cite{ProsenZnidari2009,CuiCiracBanuls2015}, 
variational principles \cite{Weimer2015}, 
and neural-network approaches \cite{VicentiniEtAl2019}; 
see \cite{WeimerKshetrimayumOrus2021} for a review. 
MOR techniques have also been applied to open-system dynamics through balanced truncation \cite{SchaeferBungEtAl2011}, 
low-rank approximation of the evolving density operator \cite{LeBrisRouchon2013}, 
and exact structure-preserving reductions tailored to prescribed initial states or observables \cite{GrigolettoEtAl2025}. 
These works primarily reduce a fixed system, 
whereas we seek a reusable surrogate for the steady states of an entire parametric family.

The work most closely related to the present study is \cite{ChristiansenBaranPaaske2025}, which applies RB ideas to driven-dissipative Markovian systems. Its steady-state approximation uses the smallest-eigenvalue formulation for $\Lindbladian(\prmtr)^\dagger\Lindbladian(\prmtr)$, cf.~\eqref{eq:abstract_EVP}, together with a residual-driven greedy construction.

\subsection{Main contributions} \label{subsec:main_contributions}

Our contributions concern the reduced methodology, the underlying mathematical
structure, and its convergence analysis.

\begin{enumerate}
\item
We provide a systematic mathematical treatment of the structure underlying
the Lindbladian steady-state problem in \Cref{sec:pLindbladian}, 
exploiting its interpretations as a
parametric eigenvalue problem and a parametric source problem.  This viewpoint
organizes the roles of the kernel geometry, trace constraint, uniqueness and
stability, and leads naturally to residual-based state-error estimates for
both reduced formulations.

\item
We develop a concrete offline--online RB framework for parametric Lindbladian
steady state approximations in \Cref{sec:RBM}.  
It treats the previously proposed reduced eigenvalue problem
and a new trace-constrained residual formulation within a common
residual-driven greedy construction, including implementable reduced
assemblies.

\item
We establish a unified convergence framework for the two greedy procedures in \Cref{sec:error-analysis},
showing step by step how continuity and uniform well-posedness 
of the steady state solution map yields valid weak-greedy
selection and how analytic parameter dependence yields
(sub-)exponential convergence, including the effect of a finite training set.  
To the best of our knowledge, 
this is the first convergence-rate result
for greedy RB constructions in a parametric
eigenvalue setting.
\end{enumerate}

\subsection{General notation} \label{subsec:notation}

$ \mathcal{H} \coloneqq \{ X \in \C^{\fdim \times \fdim} : X = X^\dagger \}$ denotes the set of $\fdim \times \fdim$ Hermitian matrices.
It is viewed as a \emph{real} Hilbert space, 
equipped with the Frobenius inner product $\langle X, Y \rangle \coloneqq \tr X^\dagger Y  = \tr XY $ 
and the induced norm $\| X \| \coloneqq {\langle X, X \rangle^{1/2}}$ for $X, Y \in\mathcal{H}$.
The identity matrix of size $\fdim \times \fdim$ is denoted by $I$. 
The commutator and anti-commutator of $X, Y \in \mathcal{H}$ are defined as 
$\big[X, Y\big] \coloneqq XY - YX$ and $\big\{ X, Y \big\} \coloneqq XY + YX$, respectively.
The set of all continuous linear operators from $\mathcal{H}$ to $\mathcal{H}$ is denoted by $\mathcal{B}(\mathcal{H})$.

For $k \in \{0, 1\}$, 
we write $A_k \coloneqq \{ X \in \mathcal{H} : \tr X = k \}$. 
In particular, $A_0$ is a linear subspace of $\mathcal{H}$ consisting of zero-trace Hermitian matrices,
and $A_1$ is an affine subspace consisting of Hermitian matrices with unit trace.
The set of positive semidefinite matrices is denoted by $\mathcal{H}_+ \coloneqq \{ X \in \mathcal{H} : X \succeq 0 \}$,
and the set of quantum states is defined as $S \coloneqq \mathcal{H}_+ \cap A_1 = \{ X \in \mathcal{H} : X \succeq 0, \tr X = 1 \}$.

For $X \in \mathcal{H}$, $M \subseteq \mathcal{H}$,
the distance from $X$ to $M$ is denoted by $\dist(X,M) \coloneqq \inf_{Y \in M}\|X-Y\|$. 

\section{Parametric Lindbladians and their steady-state sets}\label{sec:pLindbladian}

We review the basic properties of Lindbladian operators and
quantum states before introducing the parametric steady-state problem and its
notion of approximability.

\subsection{Quantum states and Lindbladian operators} \label{subsec:quantum_states_Lindbladian}

In this subsection, 
let us fix a Hermitian matrix $H = H^\dagger \in \mathcal{H}$
as well as arbitrary matrices $L_1, \ldots, L_\NrDis \in \C^{\fdim \times \fdim}$, 
and we consider the Lindbladian operator in \emph{Gorini-Kossakowski-Sudarshan-Lindblad (GKSL) form} defined as
\begin{equation}\label{eq:Lindbladian_unparametric}
\Lindbladian X 
\coloneqq -\ImgUnit \big[ H , X \big] 
+ 
\sum_{\IdxDis=1}^\NrDis \big( L_\IdxDis X  L_\IdxDis^\dagger - \frac{1}{2} \big\{ L_\IdxDis^\dagger L_\IdxDis, X\big \} \big)
\qquad \text{for } X \in \mathcal{H}. 
\end{equation}

\begin{lemma}[Basic properties of Lindbladians]
\label[Lemma]{lem:Lindbladian_properties}
The Lindbladian $\Lindbladian$ defined in
\eqref{eq:Lindbladian_unparametric} is a linear operator
$\Lindbladian:\mathcal H\to A_0$ and satisfies
$(\ker\Lindbladian)\cap S\neq\emptyset$
and 
$\ker\Lindbladian
=
\sspan_{\R}\bigl((\ker\Lindbladian)\cap S\bigr).$

In particular, the steady-state problem, 
i.e., finding $\SteadyState \in S$ such that $\Lindbladian\SteadyState = 0$,
has a solution in~$S$,
and a solution is unique if and only if $\dim\ker\Lindbladian=1$.
\end{lemma}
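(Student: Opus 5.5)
The plan is to argue in four steps. First check the mapping properties by direct computation. Then obtain existence through the semigroup $e^{t\Lindbladian}$. Next obtain the spanning property from a Jordan-decomposition argument for fixed points of that semigroup. Uniqueness then follows by linear algebra.

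\emph{Step 1 (mapping properties).} Linearity is clear. For $X\in\mathcal H$, taking adjoints of \eqref{eq:Lindbladian_unparametric} gives $(-\ImgUnit[H,X])^\dagger=-\ImgUnit[H,X]$, $(L_\IdxDis XL_\IdxDis^\dagger)^\dagger=L_\IdxDis XL_\IdxDis^\dagger$ and $\{L_\IdxDis^\dagger L_\IdxDis,X\}^\dagger=\{L_\IdxDis^\dagger L_\IdxDis,X\}$, so $\Lindbladian X\in\mathcal H$. Cyclicity of the trace gives $\tr[H,X]=0$ and $\tr(L_\IdxDis XL_\IdxDis^\dagger)=\tr(L_\IdxDis^\dagger L_\IdxDis X)=\tfrac12\tr\{L_\IdxDis^\dagger L_\IdxDis,X\}$, hence $\tr\Lindbladian X=0$ and $\Lindbladian:\mathcal H\to A_0$.

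\emph{Step 2 (existence).} Let $T_t\coloneqq e^{t\Lindbladian}$ for $t\ge0$. By Step 1, $T_t$ is trace preserving. By the GKSL theorem \cite{GoriniKossakowskiSudarshan1976,Lindblad1976}, it is (completely) positive. If one prefers not to cite this, positivity follows from the Trotter splitting of $\Lindbladian$ into $X\mapsto -\ImgUnit(KX-XK^\dagger)$ with $K=H-\tfrac{\ImgUnit}{2}\sum_\IdxDis L_\IdxDis^\dagger L_\IdxDis$, whose exponential is $X\mapsto e^{-\ImgUnit tK}Xe^{\ImgUnit tK^\dagger}$, and $X\mapsto\sum_\IdxDis L_\IdxDis XL_\IdxDis^\dagger$, whose exponential is a power series of positive maps. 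Hence $T_t(S)\subseteq S$. Since $S$ is compact and convex, the Cesàro means $\frac1\tau\int_0^\tau T_t\rho\,dt$ of any $\rho\in S$ have a cluster point $\rho_\ast\in S$ as $\tau\to\infty$. A standard estimate shows $T_s\rho_\ast=\rho_\ast$ for all $s$: the difference between the shifted and unshifted averages is $O(s/\tau)$ because $\|T_t\|$ is bounded on $S$. Differentiating at $s=0$ gives $\Lindbladian\rho_\ast=0$. Alternatively, Brouwer's theorem applied to $T_1$ together with a rational-time argument would also give this.

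\emph{Step 3 (spanning; main obstacle).} Let $X\in\ker\Lindbladian$, so $T_tX=X$, and write the Jordan decomposition $X=X_+-X_-$ with $X_\pm\succeq0$ and orthogonal supports. Then $\|X\|_1=\tr X_++\tr X_-$. Now $X=T_tX_+-T_tX_-$ is another decomposition into psd parts, and $\tr T_tX_\pm=\tr X_\pm$ by trace preservation. The key fact I will prove is that the Jordan decomposition is the unique decomposition $X=A-B$ with $A,B\succeq0$ and $\tr A+\tr B=\|X\|_1$. Let $P$ be the projector onto the support of $X_+$. Then $\tr A\ge\tr PA\ge\tr P(A-B)=\tr X_+$, and similarly $\tr B\ge\tr X_-$. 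Equality forces $(I-P)A=0$ and $PB=0$, so $A=PAP$ and $B=(I-P)B(I-P)$. Compressing $X=A-B$ by $P$ and by $I-P$ then gives $A=X_+$ and $B=X_-$. Applied to $A=T_tX_+$, $B=T_tX_-$, this yields $T_tX_\pm=X_\pm$ for all $t$, hence $\Lindbladian X_\pm=0$. If $X_\pm\neq0$, then $X_\pm/\tr X_\pm\in(\ker\Lindbladian)\cap S$, so $X$ is a real combination of steady states. The reverse inclusion is trivial because $\ker\Lindbladian$ is a linear space. This is the step needing care, mainly in the equality case of the trace estimate.

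\emph{Step 4 (uniqueness).} A solution exists by Step 2. If $\dim\ker\Lindbladian=1$, any two states in the kernel are proportional, and unit trace forces them to coincide. If $\dim\ker\Lindbladian\ge2$, Step 3 shows that $(\ker\Lindbladian)\cap S$ spans a space of dimension at least $2$, so it contains two distinct states. Hence the steady state is unique if and only if $\dim\ker\Lindbladian=1$.
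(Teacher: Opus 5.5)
Your proof is correct, and it argues differently from the paper at the two substantive points. The paper verifies the mapping properties directly, as in your Step 1. It also derives the uniqueness criterion from the spanning identity and trace normalization, as in your Step 4. But it does not prove existence of a steady state or the identity $\ker\Lindbladian=\sspan_{\R}\bigl((\ker\Lindbladian)\cap S\bigr)$. It takes both from \cite[Sec.~3.6 and~4.1]{BaumgartnerNarnhofer2008}, where they are obtained within a structure theory for GKSL semigroups.

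You instead prove both from scratch, using only that $T_t=e^{t\Lindbladian}$ is positive and trace preserving:
\begin{itemize}
\item Existence comes from Ces\`aro means on the compact convex set $S$.
\item Spanning comes from the uniqueness of the trace-minimal splitting $X=A-B$ into positive parts. This forces $T_t$ to fix $X_+$ and $X_-$ separately.
\end{itemize}
The equality case checks out: $\tr\bigl((I-P)A(I-P)\bigr)=0$ and $\tr(PBP)=0$ give $(I-P)A=0$ and $PB=0$, and compressing by $P$ and by $I-P$ recovers $X_\pm$. The Trotter argument for positivity is also sound.

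Your route gives a self-contained, elementary proof that is more general. The GKSL form enters only through positivity of $T_t$. Step 3 in fact shows that the fixed-point space of any positive trace-preserving map on $\mathcal H$ is spanned by states. The paper's citation gives brevity and places the lemma inside a finer theory of stationary states, which this lemma does not need.
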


\begin{proof}[Proof sketch]
Linearity, Hermiticity preservation and
$\tr(\Lindbladian X)=0$ follow directly from
\eqref{eq:Lindbladian_unparametric}. 
The existence of stationary states 
and the relation $\ker\Lindbladian
=
\sspan_{\R}\bigl((\ker\Lindbladian)\cap S\bigr)$
for finite-dimensional GKSL generators
are established in~\cite[Sec.~3.6 and~4.1]{BaumgartnerNarnhofer2008}. 
The final assertion follows from $\ker\Lindbladian
=
\sspan_{\R}\bigl((\ker\Lindbladian)\cap S\bigr)$ and the trace-one
normalization; see also \cite[Sec.~3.6 and~4.1]{BaumgartnerNarnhofer2008}.
\end{proof}

\begin{lemma}[Basic geometric properties]
\label[lemma]{lem:geometric_properties}
It holds that
\[
\mathcal H=A_0\oplus\sspan_{\R}\{I\},
\qquad
A_1
=
{I}/{N}
+A_0,
\qquad
S=A_1\cap\mathcal H_+.
\]
In particular, $A_1$ is an affine convex subset of $\mathcal H$,
$S$ is a compact convex subset of $\mathcal H$, and
\[
{1}/{\sqrt N} \leq \|X\|  
\quad\text{for }X\in A_1,
\qquad
{1}/{\sqrt N}\leq\|\rho\|\leq1
\quad\text{for }\rho\in S.
\]
\end{lemma}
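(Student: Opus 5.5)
The plan is to prove the three identities first, from the trace functional and the identity matrix, and then read off the convexity, compactness and norm bounds.

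\textbf{Step 1: the decomposition $\mathcal H=A_0\oplus\sspan_{\R}\{I\}$.} Note that $A_0=\{X:\langle I,X\rangle=0\}$, since $\langle I,X\rangle=\tr X$. So $A_0$ is exactly the Frobenius-orthogonal complement of $I$ in the real Hilbert space $\mathcal H$. Concretely, for any $X\in\mathcal H$ I write
\[
X=\bigl(X-\tfrac{\tr X}{N}I\bigr)+\tfrac{\tr X}{N}I .
\]
The first term is traceless and Hermitian. Uniqueness holds because $\tr(cI)=cN$, so $cI\in A_0$ forces $c=0$. The decomposition is in fact orthogonal.

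\textbf{Step 2: $A_1=I/N+A_0$ and $S=A_1\cap\mathcal H_+$.} The first identity holds because $\tr(I/N+Y)=1$ if and only if $\tr Y=0$. The second is simply the definition of $S$. Convexity then follows directly: $A_1$ is a translate of a linear subspace, hence affine and convex, and $\mathcal H_+$ is a convex cone, so $S$ is convex as an intersection of convex sets. For closedness, $A_1$ is the preimage of $1$ under the continuous trace map. $\mathcal H_+$ is closed because it equals $\bigcap_v\{X:v^\dagger Xv\ge0\}$, an intersection of closed half-spaces.

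\textbf{Step 3: norm bounds.} The lower bound comes from orthogonality. For $X\in A_1$, write $X=I/N+Y$ with $Y\in A_0\perp I$. Pythagoras gives
\[
\|X\|^2=\|I/N\|^2+\|Y\|^2\ge \tfrac{1}{N^2}\,\tr I=\tfrac1N .
\]
For the upper bound, take $\rho\in S$ with eigenvalues $\lambda_i\ge0$ and $\sum\lambda_i=1$. Then $\|\rho\|^2=\sum\lambda_i^2\le(\sum\lambda_i)^2=1$. This upper bound gives boundedness of $S$, and combined with closedness from Step 2 it yields compactness, since $\mathcal H$ is finite dimensional (Heine--Borel).

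No step is a real obstacle. The only points that need care are using the real-inner-product structure so that $\perp$ is meaningful, and getting the upper bound from the eigenvalues via positivity.
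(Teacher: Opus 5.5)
Your proposal is correct and follows essentially the same route as the paper: you identify $A_0=\sspan_{\R}\{I\}^\perp$ via $\langle X,I\rangle=\tr X$, bound $\|X\|$ from below through the component along $I$, and bound $\|\rho\|\le 1$ via the nonnegative eigenvalues summing to one. For the lower bound you use Pythagoras where the paper uses Cauchy--Schwarz, $1=\langle X,I\rangle\le\sqrt N\|X\|$, which amounts to the same thing; the lower bound for $\rho\in S$ follows from $S\subseteq A_1$, and your closedness argument for $\mathcal H_+$ fills in a detail the paper leaves implicit.
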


\begin{proof}
Since $\langle X,I\rangle=\tr X$, we have
$A_0=\sspan_{\R}\{I\}^{\perp}$, and every $X\in A_1$ has the unique
representation $X=I/N+X_0$ with $X_0\in A_0$. Moreover,
$1=\langle X,I\rangle\leq\sqrt N\|X\|$ for $X\in A_1$. If
$\lambda_1,\ldots,\lambda_N\geq0$ are the eigenvalues of $\rho\in S$, then
$\sum_i\lambda_i=1$ and
$1/N\leq\|\rho\|^2=\sum_i\lambda_i^2\leq1$. Finally,
$S=A_1\cap\mathcal H_+$ is closed and convex, and its boundedness together
with finite dimensionality gives compactness. 
\end{proof}

\begin{remark}[Trace-zero formulation of the steady-state problem]
\label[Remark]{rem:steady_state_eqn_as_LES}
By \Cref{lem:geometric_properties}, every
$\SteadyState\in S$ can be written uniquely 
as $\SteadyState=I/N+X$ with $X\in A_0$. 
Consequently,
\begin{equation}
\Lindbladian\SteadyState=0
\qquad\Longleftrightarrow\qquad
\Lindbladian X
=
- 
N^{-1}
\Lindbladian I.
\label{eq:steady_state_eqn_as_LES}
\end{equation}
Therefore, the steady-state problem on $A_1$ is
equivalently an inhomogeneous linear equation on~$A_0$. 
\end{remark}

\subsection{Parametric formulation and approximability of the steady state solution set} \label{subsec:parametric_lindbladians_sol_set}

We first specify the class of problems under consideration.

\begin{assumption}[Affine decomposition structure]
\label[assumption]{ass:affine_decomp_param_Lindbladian}
For a compact domain $\prmtrSet\subseteq\R^\pDim$, the parametric
Lindbladian $\Lindbladian:\prmtrSet\to\mathcal B(\mathcal H)$ has the form
\begin{equation}
\Lindbladian(\prmtr)X
=
\sum_{p=1}^{\NrHam}
\theta_p(\prmtr) \mathcal{K}_p(X)
+
\sum_{q=1}^{\NrDis}
\phi_q(\prmtr)\mathcal D_q(X),
\label{eq:Lindbladian_affine_decomposition}
\end{equation}
where 
\begin{align*}
\mathcal{K}_p(X)
\coloneqq \, 
-\ImgUnit \big[ H_p, X \big]
\qquad \text{and} \qquad
\mathcal D_q(X)
& \coloneqq \, 
\sum_{\ell=1}^{m_q}
\bigl(
L_{q,\ell}XL_{q,\ell}^\dagger
-
\frac12
\{L_{q,\ell}^\dagger L_{q,\ell},X\}
\bigr)
\end{align*}
for $p=1,\ldots,\NrHam$ and $q=1,\ldots,\NrDis$,
with each $H_\IdxHam\in\mathcal H$ 
and $L_{q,\ell}\in\C^{\fdim\times\fdim}$ fixed.
The coefficient functions
$\theta_p:\prmtrSet\to\R$ and
$\phi_q:\prmtrSet\to[0,\infty)$ are also fixed scalar functions.
\end{assumption}
We remark that nonnegativity of the $\phi_q$ is required for $\Lindbladian(\prmtr)$ to satisfy \eqref{eq:Lindbladian_unparametric}.

Next, we define the solution set to be approximated and specify its (linear) approximability. 

\begin{definition}\label[definition]{def:sol_set}
For $\Lindbladian: \prmtrSet \to \mathcal{B}(\mathcal{H})$ as in \Cref{ass:affine_decomp_param_Lindbladian},
its {\emph{steady state solution set}} is defined as 
\begin{align*} 
\SolSet \; \coloneqq \; \{ \rho \in S \, | \, \exists \prmtr \in \prmtrSet : \Lindbladian(\prmtr) \rho = 0 \}. 
\end{align*}
The \emph{Kolmogorov-$n$-width} of $\mathcal{M}$ is defined as 
\begin{align*}
d_n(\mathcal{M})
\;\coloneqq
\inf_{\substack{V\subseteq\mathcal H\\ \dim V\leq n}}
\sup_{\rho \in \mathcal{M}}
\mathrm{dist}(\rho,V)
\qquad \text{for } n\in \N.
\end{align*}
\end{definition}

Although it is customary to call $\mathcal M$ the "solution manifold",  
it is not necessarily a manifold in the topological sense.
Hence, we refer to it as the "steady-state solution set".

Since $\mathcal{M}\subseteq S \subseteq A_1$,
it is also possible to approximate $\mathcal{M}$ by affine subspaces. 

\begin{lemma}[Linear and affine widths]
\label[lemma]{lem:linear_affine_widths_equivalent}
For $n\geq0$, define the affine Kolmogorov-$n$-width by 
\[
d_n^{\mathrm{aff}}(\mathcal M)
\coloneqq
\inf_{\substack{V\subseteq\mathcal H,\,\dim V\leq n\\X_0\in\mathcal H}}
\sup_{\rho\in\mathcal M}\dist(\rho,X_0+V).
\]
Then it holds that 
\(
d_{n+1}(\mathcal M)
\leq d_n^{\mathrm{aff}}(\mathcal M)
\leq d_n(\mathcal M).
\)
\end{lemma}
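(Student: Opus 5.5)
The two inequalities are handled separately, and both rest on elementary inclusions between the classes of competing approximation sets in the two infima. No property of $\mathcal M$ beyond its being a subset of $\mathcal H$ is needed.

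\emph{Upper bound.} To show $d_n^{\mathrm{aff}}(\mathcal M)\le d_n(\mathcal M)$, restrict the infimum defining $d_n^{\mathrm{aff}}$ to the choice $X_0=0$. Every linear subspace $V$ with $\dim V\le n$ is then an admissible affine competitor $0+V$. Hence, for each such $V$,
\[
d_n^{\mathrm{aff}}(\mathcal M)\le \sup_{\rho\in\mathcal M}\dist(\rho,V),
\]
and taking the infimum over $V$ gives the claim.

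\emph{Lower bound.} To show $d_{n+1}(\mathcal M)\le d_n^{\mathrm{aff}}(\mathcal M)$, fix an arbitrary pair $(V,X_0)$ with $\dim V\le n$ and $X_0\in\mathcal H$. Set $W\coloneqq V+\sspan_{\R}\{X_0\}$. This is a linear subspace of $\mathcal H$ with $\dim W\le n+1$, and it contains the affine set $X_0+V$. Distance to a set can only decrease when the set is enlarged, so for every $\rho\in\mathcal M$,
\[
\dist(\rho,W)\le\dist(\rho,X_0+V).
\]
Taking the supremum over $\rho\in\mathcal M$ gives
\[
d_{n+1}(\mathcal M)\le\sup_{\rho\in\mathcal M}\dist(\rho,W)\le\sup_{\rho\in\mathcal M}\dist(\rho,X_0+V).
\]
Since the pair $(V,X_0)$ was arbitrary, taking the infimum over all such pairs yields $d_{n+1}(\mathcal M)\le d_n^{\mathrm{aff}}(\mathcal M)$.

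Neither step poses a real obstacle. The only point needing care is the monotonicity of the Kolmogorov widths with respect to the admissible dimension: both definitions use $\dim V\le n$ rather than $\dim V=n$, so a subspace $W$ of dimension at most $n+1$ is a valid competitor for $d_{n+1}$. Finiteness of the suprema is automatic, because $\mathcal M\subseteq S$ is bounded by \Cref{lem:geometric_properties}.
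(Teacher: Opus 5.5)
Your proposal is correct and follows essentially the same route as the paper: the upper bound comes from choosing $X_0=0$, and the lower bound from the inclusion $X_0+V\subseteq V+\sspan_{\R}\{X_0\}$, a linear space of dimension at most $n+1$. Your version simply writes out the infimum/supremum bookkeeping that the paper leaves implicit.
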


\begin{proof}
The upper bound follows by choosing $X_0=0$. 
For the lower bound, 
$X_0+V$ is contained in $\sspan_{\R}(\{X_0\}\cup V)$, 
whose dimension is at most $n+1$. 
\end{proof}

Thus, the linear and affine widths are equivalent up to an index shift. 
We focus on the classical Kolmogorov-$n$-width; 
further properties of $\mathcal M$ and $d_n(\mathcal M)$ are discussed 
in \Cref{subsec:width-convergence}.

\section{Reduced formulations and numerical methods}
\label{sec:RBM}

The Kolmogorov widths introduced in \Cref{sec:pLindbladian} quantify 
the approximability of the steady-state solution set, 
and this section aims to provide a constructive reduced model.

We review the general reduced basis method in \Cref{subsec:RBM_strategy_general}, 
describe the offline assembly of the reduced operators in \Cref{subsec:reduced_operator_assembly}, 
formulate the two reduced problems in \Cref{subsec:RBM_formulations}, 
and discuss optional positivity post-processing procedures in \Cref{subsec:post_processing}.

\subsection{Reduced basis method}
\label{subsec:RBM_strategy_general}

The following assumption is fundamental for RBM. 

\begin{assumption}[Steady-state solver]
We assume access to a solver which, 
for every selected $\prmtr\in\prmtrSet$, 
yields a $\rho_\prmtr$, 
such that $\dist \big(\rho_\prmtr, S \cap\ker \Lindbladian(\prmtr) \big)$ is below any prescribed numerical accuracy.
We denote $\SteadyState(\prmtr) \coloneqq \rho_\prmtr$, 
and regard it as an \emph{exact steady state}. 
\end{assumption}

Uniqueness is not required for constructing the snapshot spaces or evaluating
the residual indicators, 
and $\SteadyState(\prmtr)$ denotes the representative
returned by the chosen steady-state solver.  
We only impose uniqueness later, 
when state-error estimates and convergence 
for a single-valued solution map are considered.

For clarity, let us now elaborate the RBM procedure. 

\begin{remark}[RBM for parametric Lindbladian] 
\label[remark]{rem:RBM_pLindblad_outline} 
Given a parametric Lindbladian $\Lindbladian(\prmtr)$ as in \Cref{ass:affine_decomp_param_Lindbladian},
the RB procedure consists of the following two phases: 
\begin{itemize}
\item 
The \emph{offline phase} assembles a reduced space $V$ based on snapshot solutions $\SteadyState(\prmtr_1),\dots, \SteadyState(\prmtr_J)$.
Common assembly strategies include \emph{proper orthogonal decomposition (POD)},
or \emph{greedy methods}, see \Cref{subsubsec:POD_vs_greedy}.
Once $V$ is constructed, 
with an orthonormal basis (ONB) $B_1, \dots,  B_\rdim$ available,
we compute and store parameter-independent quantities \eqref{eq:offline_HH_matrix}--\eqref{eq:offline_DD_matrix},
which will be used for reduced approximation of the steady states.
\item 
In the \emph{online phase}, given any $\prmtr \in \prmtrSet$,
we assemble the reduced model \eqref{eq:online_assembly_reduced_normal_matrix}, 
choose a reduced formulation \eqref{eq:abstract_EVP} or \eqref{eq:abstract_constraint_res_min}, 
compute a corresponding reduced solution 
$\rho_{\mathrm{eig}}^{(r)}(\prmtr)$ or $\rho_{\mathrm{aff}}^{(r)}(\prmtr)$ 
as specified later in \eqref{eq:trace_normalized_reduced_eigenvector} or \eqref{eq:reduced_affine_approximation}, respectively.
The resulting approximation is Hermitian. 
It has unit trace in the affine
formulation \eqref{eq:abstract_constraint_res_min} and, 
provided that the trace normalization is well-defined, also
in the eigenvalue formulation \eqref{eq:abstract_EVP}.
Post-processing procedures for positivity, 
as described in \Cref{subsec:post_processing}, 
can be optionally performed. 
\end{itemize}
\end{remark}

\begin{remark}[Reduced evaluation of observables]
  \label{rem:observable}
For an observable $O \in \mathcal{H}$, 
one is interested in {its expectation value with respect to the steady state $\rho(\prmtr)$},
i.e.,
\[
\langle O\rangle_{\rho(\prmtr)}
\coloneqq
\langle O,\rho(\prmtr)\rangle
=
\tr\bigl(O\rho(\prmtr)\bigr),
\]
for different parameters $\prmtr\in\prmtrSet$.
Let $V$ be an offline-constructed reduced space, 
with ONB $B_1,\ldots,B_r$. 
For $\star\in\{\mathrm{eig},\mathrm{aff}\}$, 
let $x_\star^{(r)}(\prmtr)$ denote the coefficient vector 
of $\rho_\star^{(r)}(\prmtr)$ in this basis.
Once $O$ is supplied, 
its reduced representation $o_r
\coloneqq
(
\langle O,B_1\rangle,\ldots,\langle O,B_r\rangle
)^\top$ is computed once, 
and the expectation is evaluated as
\[
\langle O\rangle_{\rho_\star^{(r)}(\prmtr)}
\, = \, 
o_r^\top x_\star^{(r)}(\prmtr).
\]
Thus, for the evaluation of the observable $O$,
only the one-time construction of~$o_r$ involves full-order quantities.
Subsequent parameter queries require no state reconstruction or other full-order computations,
unless positivity postprocessing is applied.
\end{remark}

Next, we describe two common ways to construct $V$.

\subsubsection{Subspace assembly strategies} \label[subsubsection]{subsubsec:POD_vs_greedy}

For prescribed snapshots
$\SteadyState(\prmtr_1),\ldots,\SteadyState(\prmtr_J)$ and a cut-off
$\varepsilon>0$, the \emph{POD basis} is constructed as follows.
\begin{enumerate} 
\item
Form the Gram matrix $G\in\R^{J\times J}$ with
$G_{ij}=\langle\SteadyState(\prmtr_i),\SteadyState(\prmtr_j)\rangle$.
\item
Compute eigenpairs
$Gq_i=\lambda_iq_i$ with
$\lambda_1\geq\cdots\geq\lambda_J\geq0$ and orthonormal~$q_i$.
\item
Retain the indices $i=1,\ldots,r$ satisfying
$\lambda_i>\varepsilon$ and set
\begin{equation*}
B_i^{\mathrm{POD}}
\coloneqq
\frac{1}{\sqrt{\lambda_i}}
\sum_{j=1}^J(q_i)_j \, \SteadyState(\prmtr_j).
\end{equation*}
\end{enumerate}
The space
$V^{\mathrm{POD}}=\sspan_\R\{B_1^{\mathrm{POD}},\ldots,
B_r^{\mathrm{POD}}\}$ minimizes the mean-square Frobenius projection error
of the prescribed snapshots among all $r$-dimensional subspaces.  Its
construction, however, requires all snapshot solves in advance; see, e.g.,
\cite{HesthavenRozzaStamm2016, Haasdonk2017ReducedBasis} for more discussions.

Alternatively, the \emph{greedy construction} starts from
$V_1=\sspan_\R\{I\}$ and, for a finite training set
$\Xi_{\mathrm{train}}\subseteq\prmtrSet$ and 
an \emph{error indicator} $\Delta^{(r)}:\prmtrSet\to [0,\infty)$
selects 
\begin{equation*}
  \prmtr_{\mathrm{next}}
  \in
  \arg\max_{\prmtr\in\Xi_{\mathrm{train}}}
  \Delta^{(r)}(\prmtr).
\end{equation*}
The selected steady state is computed and appended to the
current space unless the maximal indicator satisfies the stopping criterion.
Two indicators will be introduced in \Cref{subsec:RBM_formulations}, 
and the complete construction will be given in \Cref{alg:greedy_reduced_basis}.

\subsection{Offline computation of reduced operators}
\label{subsec:reduced_operator_assembly}

Let $ B_1,\ldots,B_r$ be the Frobenius-orthonormal basis of the reduced space
$V $ constructed, 
e.g. by one of the methods described in \Cref{subsubsec:POD_vs_greedy}. 
For $\prmtr\in\prmtrSet$, define the self-adjoint positive semidefinite operator
\begin{equation*}
\mathcal A(\prmtr)
\coloneqq
\Lindbladian(\prmtr)^\dagger
\Lindbladian(\prmtr)
\in\mathcal B(\mathcal H).
\end{equation*}
Its compression to $V$ is represented by the reduced matrix
$A_r(\prmtr)\in\R^{r\times r}$ with entries
\begin{equation}
\bigl(A_r(\prmtr)\bigr)_{ij}
\coloneqq
\left\langle
\mathcal A(\prmtr)B_j,
B_i
\right\rangle 
=
\left\langle
\Lindbladian(\prmtr)B_j,
\Lindbladian(\prmtr)B_i
\right\rangle 
\qquad \text{for }
i,j=1,\dots,r
.
\label{eq:reduced_normal_matrix}
\end{equation}
For any parameter dependent coefficients $x(\prmtr)=\big(x_j(\prmtr) \big)_{j=1}^r$ 
and $\rho^{(r)}(\prmtr) \coloneqq \sum_{j=1}^r x_j(\prmtr) B_j$,
we have 
\begin{equation*}
\bigl\|
\Lindbladian(\prmtr)\rho^{(r)}(\prmtr)
\bigr\| ^2
\; = \;
x(\prmtr)^\top
A_r(\prmtr) \,
x(\prmtr).
\end{equation*}
The symmetric positive semidefinite matrix $A_r(\prmtr)$ is
the Galerkin projection of $\mathcal A(\prmtr)$ to~$V$ and is used in both
the greedy and online phases.

Next, we describe how $A_r(\prmtr)$ can be assembled efficiently.
Recall the
affine decomposition \eqref{eq:Lindbladian_affine_decomposition} 
from  \Cref{ass:affine_decomp_param_Lindbladian}. 
For the offline assembly, we precompute the action of every component operator
on the reduced basis,
i.e., $\mathcal K_\IdxHam B_j$ and $\mathcal D_\IdxDis B_j$ 
for $\IdxHam=1,\ldots,\NrHam$, 
$\IdxDis=1,\ldots,\NrDis$
and $j=1,\ldots,r$.
Using these quantities, define the parameter-independent matrices
$A_{r,\mathrm{HH}}^{\IdxHam,\IdxHam'}, 
A_{r,\mathrm{HD}}^{\IdxHam,\IdxDis}, 
A_{r,\mathrm{DD}}^{\IdxDis,\IdxDis'}\in\R^{r\times r}$ 
by
\begin{align}
\bigl(A_{r,\mathrm{HH}}^{\IdxHam,\IdxHam'}\bigr)_{ij}
&\coloneqq
\bigl\langle
\mathcal K_\IdxHam B_j,
\mathcal K_{\IdxHam'} B_i
\bigr\rangle ,
\label{eq:offline_HH_matrix}
\\
\bigl(A_{r,\mathrm{HD}}^{\IdxHam,\IdxDis}\bigr)_{ij}
&\coloneqq
\bigl\langle
\mathcal K_\IdxHam B_j,
\mathcal D_\IdxDis B_i
\bigr\rangle 
+
\bigl\langle
\mathcal D_\IdxDis B_j,
\mathcal K_\IdxHam B_i
\bigr\rangle ,
\label{eq:offline_HD_matrix}
\\
\bigl(A_{r,\mathrm{DD}}^{\IdxDis,\IdxDis'}\bigr)_{ij}
&\coloneqq
\bigl\langle
\mathcal D_\IdxDis B_j,
\mathcal D_{\IdxDis'} B_i
\bigr\rangle .
\label{eq:offline_DD_matrix}
\end{align}
Here,
$\IdxHam,\IdxHam'= 1,\ldots,\NrHam $,
$\IdxDis,\IdxDis'= 1,\ldots,\NrDis $
and $i,j = 1,\ldots, r$.

Expanding \eqref{eq:reduced_normal_matrix} with the affine decomposition of
$\Lindbladian(\prmtr)$ yields
\begin{align}
A_r(\prmtr)
\;=\;
\sum_{\IdxHam,\IdxHam'=1}^{\NrHam}
\theta_\IdxHam(\prmtr)\theta_{\IdxHam'}(\prmtr)
A_{r,\mathrm{HH}}^{\IdxHam,\IdxHam'}
\,+\,
\sum_{\IdxHam=1}^{\NrHam}
\sum_{\IdxDis=1}^{\NrDis}
\theta_\IdxHam(\prmtr)\phi_\IdxDis(\prmtr)
A_{r,\mathrm{HD}}^{\IdxHam,\IdxDis}
\,+
\sum_{\IdxDis,\IdxDis'=1}^{\NrDis}
\phi_\IdxDis(\prmtr)\phi_{\IdxDis'}(\prmtr)
A_{r,\mathrm{DD}}^{\IdxDis,\IdxDis'}.
\label{eq:online_assembly_reduced_normal_matrix}
\end{align}
The matrices in
\eqref{eq:offline_HH_matrix}--\eqref{eq:offline_DD_matrix} are computed offline.
Using their symmetry, 
i.e., 
$(A_{r,\mathrm{HH}}^{\IdxHam,\IdxHam'})^\top
=
A_{r,\mathrm{HH}}^{\IdxHam',\IdxHam}$, 
$(A_{r,\mathrm{DD}}^{\IdxDis,\IdxDis'})^\top
=
A_{r,\mathrm{DD}}^{\IdxDis',\IdxDis}$ and
$A_{r,\mathrm{HD}}^{\IdxHam,\IdxDis}$ being symmetric,
the number of stored $r\times r$ matrices
reduces to ${(\NrHam+\NrDis)(\NrHam+\NrDis+1)}/2$.
For a new parameter $\prmtr$, the reduced matrix $A_r(\prmtr)$ is computed
using only evaluations of the scalar coefficient functions and operations on $r\times r$ matrices.
In particular, no application of the full-order Lindbladian is required.

The offline quantities can also be updated incrementally during the greedy construction: 
when a new orthonormal basis matrix $B_{r+1}$ is added, 
only the actions $\mathcal K_\IdxHam B_{r+1}$ and $\mathcal D_\IdxDis B_{r+1} $
and the corresponding new rows and columns of the reduced matrices need to be computed; 
all previously assembled entries remain unchanged.

\subsection{Reduced formulations and greedy selection rules}
\label{subsec:RBM_formulations}

We introduce two reduced formulations based on the matrix
$A_r(\prmtr)$ from
\Cref{subsec:reduced_operator_assembly}.
These formulations are relevant for both the greedy selection in the offline phase
and the reduced steady state approximation in the online phase. 

Let us also introduce the parameter-independent vector
\begin{equation*}
t_r
\coloneqq
(
\tr B_1,\ldots,\tr B_r
)^\top
\in\R^{r}, 
\end{equation*}
which is used to represent the trace of an element in $V$.

\subsubsection{Reduced eigenvalue formulation} 
\label{subsubsec:EVP_formulation}
Following \eqref{eq:abstract_EVP}, 
we consider $x_{\mathrm n}^{(\rdim)}(\prmtr)\in\R^\rdim$ with
\begin{equation*}
x_{\mathrm n}^{(r)}(\prmtr)
\, \in
\underset{
x\in\R^\rdim,\ \|x\|_2=1
}{
\operatorname{argmin}
}
x^\top A_\rdim(\prmtr) \, x,
\end{equation*}
where $\|\cdot\|_2$ denotes the canonical Euclidean norm of $\mathbb{R}^\rdim$,
and we define
\begin{equation}
\rho_{\mathrm n}^{(\rdim)}(\prmtr)
\, \coloneqq
\sum_{j = 1 }^\rdim
\bigl(x_{\mathrm n}^{(r)}(\prmtr)\bigr)_j B_j.
\label{eq:norm_normalized_reduced_eigenvector}
\end{equation}
Equivalently, $x_{\mathrm n}^{(\rdim)}(\prmtr)$ is a normalized eigenvector
associated with the smallest eigenvalue of $A_r(\prmtr)$.

For the \emph{greedy construction} of the reduced space in the eigenvalue formulation, 
we use the squared residual norm 
as the error indicator, i.e., 
\begin{align}
\Delta_{\mathrm{eig}}^{(r)}(\prmtr)
\,\coloneqq \, 
\|\Lindbladian(\prmtr)\rho_{\mathrm n}^{(r)}(\prmtr)\| ^2
\, = \, 
{
\bigl(x_{\mathrm n}^{(r)}(\prmtr)\bigr)^\top
A_r(\prmtr) \, 
x_{\mathrm n}^{(r)}(\prmtr)
}
\, = \,
\lambda_{\min}\bigl(A_r(\prmtr)\bigr).
\label{eq:eigenvalue_error_indicator}
\end{align}
The greedy algorithm using this error estimator is called \emph{EigGreedy}.

If $\tr \rho_{\mathrm n}^{(r)}(\prmtr)
=
t_r^\top x_{\mathrm n}^{(r)}(\prmtr)
\neq 0,$
the unit-trace reduced approximation is set to be 
\begin{equation}
\rho_{\mathrm{eig}}^{(r)}(\prmtr)
\, \coloneqq
\frac{
\rho_{\mathrm n}^{(r)}(\prmtr)
}{
\tr \rho_{\mathrm n}^{(r)}(\prmtr)
}.
\label{eq:trace_normalized_reduced_eigenvector}
\end{equation}
Equivalently, its coefficient vector is given by ${
x_{\mathrm n}^{(r)}(\prmtr)
}/{
(t_r^\top x_{\mathrm n}^{(r)}(\prmtr))
}$. 
Note that this trace normalization is performed only in the \emph{online} phase.

\subsubsection{Affine residual formulation}
\label{subsubsec:affine_res_formulation}

Motivated by \eqref{eq:abstract_constraint_res_min},
the affine residual formulation incorporates the unit-trace condition directly
into the reduced solve. 
Its coefficient vector is defined by
\begin{equation}
x_{\mathrm{aff}}^{(r)}(\prmtr)
\in
\underset{
x\in\R^r,\ t_r^\top x=1
}{
\operatorname{argmin}
}
\frac{1}{2}x^\top A_r(\prmtr)x.
\label{eq:reduced_affine_residual_problem}
\end{equation}
For the greedy spaces, feasibility follows from $I/N \in V=V_r$.
For a nontrivial POD space, the leading POD mode can be chosen to have
nonzero trace, since the snapshot Gram matrix has nonnegative entries.
Hence, in either construction, $V\cap A_1\neq\emptyset$.
The first-order optimality conditions of \eqref{eq:reduced_affine_residual_problem} 
yield the saddle-point system
\begin{equation}
\begin{pmatrix}
A_r(\prmtr) & t_r\\
t_r^\top & 0
\end{pmatrix}
\begin{pmatrix}
x_{\mathrm{aff}}^{(r)}(\prmtr)\\
\eta^{(r)}(\prmtr)
\end{pmatrix}
=
\begin{pmatrix}
0\\
1
\end{pmatrix},
\label{eq:reduced_affine_KKT_system}
\end{equation}
where $\eta^{(r)}(\prmtr)\in\R$ denotes the Lagrange multiplier.
If the minimizer in \eqref{eq:reduced_affine_residual_problem} is not unique,
\eqref{eq:reduced_affine_KKT_system} is understood as selecting any solution of
the solvable saddle-point system.  
If the full problem has a unique steady
state, $\mathcal A(\prmtr)$ is positive definite on~$A_0$.
Hence,  
the reduced objective is strictly convex 
on the feasible affine space and the saddle-point
system is nonsingular.

The resulting reduced approximation is then given by 
\begin{equation}
\rho_{\mathrm{aff}}^{(r)}(\prmtr)
\coloneqq
\sum_{j = 1 }^r
\bigl(x_{\mathrm{aff}}^{(r)}(\prmtr)\bigr)_j B_j.
\label{eq:reduced_affine_approximation}
\end{equation}
By construction, it holds that $\tr \rho_{\mathrm{aff}}^{(r)}(\prmtr)
=
t_r^\top x_{\mathrm{aff}}^{(r)}(\prmtr)
=
1.
$
Moreover,
by writing $ X(\prmtr) \coloneqq \rho_{\mathrm{aff}}^{(r)}(\prmtr) - I/N \in V \cap A_0$,
we have 
$$
\Lindbladian(\prmtr)\rho_{\mathrm{aff}}^{(r)}(\prmtr)  
\, = \,
\Lindbladian(\prmtr)X(\prmtr) 
+ N^{-1} \Lindbladian(\prmtr)I. 
$$
In other words, 
the affine residual formulation
corresponds to approximating the steady state problem in $V \cap A_0$ from the linear system point of view, 
cf. \Cref{rem:steady_state_eqn_as_LES}.

For the greedy construction, 
we again use 
the squared residual norm 
as the error indicator, i.e.,
\begin{align}
\Delta_{\mathrm{aff}}^{(r)}(\prmtr)
\, \coloneqq \,
\|\Lindbladian(\prmtr)\rho_{\mathrm{aff}}^{(r)}(\prmtr)\| ^2
\, = \,
{
\bigl(x_{\mathrm{aff}}^{(r)}(\prmtr)\bigr)^\top
A_r(\prmtr) \, 
x_{\mathrm{aff}}^{(r)}(\prmtr)
}
.
\label{eq:affine_error_indicator}
\end{align}
The greedy algorithm using this error estimator is called \emph{AffGreedy}.

\subsubsection{Algorithmic formulation and computational comparison}

For either formulation
$\star\in\{\mathrm{eig},\mathrm{aff}\}$,
the next snapshot parameter $\prmtr_{\mathrm{next},\star}$ is selected as
the maximizer of the corresponding indicator $\Delta_{\star}^{(r)}$ over the training set $\Xi_{\mathrm{train}}$.
The steady state $\SteadyState(\prmtr_{\mathrm{next},\star})$ is then computed,
orthonormalized against the current reduced basis, and used to enrich the
reduced space.

The construction starts from $V_1=\sspan_\R\{I\}$, motivated
by $A_1=I/N+A_0$.  Since the nested reduced spaces give
$\Delta_\star^{(r+1)}(\prmtr)\leq \Delta_\star^{(r)}(\prmtr)$, parameters
whose indicators are below tolerance can be removed from subsequent sweeps.
In other words,
we only need to evaluate $\Delta_\star^{(r)}$ on an \emph{active training set} $\Xi_{\mathrm{act}} \subseteq \Xi_{\mathrm{train}}$. 
The resulting procedure is summarized in \Cref{alg:greedy_reduced_basis}. 

At every active parameter, the eigenvalue formulation
computes the smallest eigenpair of $A_r(\prmtr)$, whereas the affine
formulation solves the $(r+1)$-dimensional saddle-point system
\eqref{eq:reduced_affine_KKT_system}.  Both dense solves have cubic worst-case
complexity in~$r$; their practical costs are compared in \Cref{sec:num}.

\begin{algorithm}[htbp]
\caption{Greedy construction of the reduced space}
\label{alg:greedy_reduced_basis}
\begin{algorithmic}[1]

\REQUIRE
Data of the parametric Lindbladian
$\Lindbladian(\prmtr)$ satisfying
\Cref{ass:affine_decomp_param_Lindbladian},
training set $\Xi_{\mathrm{train}}\subseteq\prmtrSet$,
indicator tolerance $\varepsilon_{\mathrm{tol}}>0$,
maximum reduced dimension $J_{\max}\geq1$,
and a formulation $\star\in\{\mathrm{eig},\mathrm{aff}\}$

\ENSURE
Reduced space $V_r$ with Frobenius-orthonormal basis
$B_1,\ldots,B_r$

\STATE Set
$V_1\gets\sspan_{\R}\{I\}$,
$B_1\gets I/\sqrt N$,
and $r\gets 1 $

\STATE Set
$\Xi_{\mathrm{act}}\gets\Xi_{\mathrm{train}}$

\WHILE{$r<J_{\max}$}

  \FOR{all $\prmtr\in\Xi_{\mathrm{act}}$}
    \STATE Assemble $A_r(\prmtr)$ using
    \eqref{eq:online_assembly_reduced_normal_matrix}
    \STATE Solve the reduced problem corresponding to
    $\star\in\{\mathrm{eig},\mathrm{aff}\}$
    \STATE Compute
    $\Delta_{\star}^{(r)}(\prmtr)$ using
    \eqref{eq:eigenvalue_error_indicator} or
    \eqref{eq:affine_error_indicator}
  \ENDFOR

  \STATE Set $  \Xi_{\mathrm{act}}
  \gets
  \{
  \prmtr\in\Xi_{\mathrm{act}}:
  \Delta_{\star}^{(r)}(\prmtr)
  >
  \varepsilon_{\mathrm{tol}}
  \}$

  \IF{$\Xi_{\mathrm{act}}=\emptyset$}
    \STATE \textbf{break}
  \ENDIF

  \STATE Select
  $ \prmtr_{\mathrm{next},\star} 
  \in
  \operatorname*{arg\,max}_{\prmtr\in\Xi_{\mathrm{act}}}
  \Delta_{\star}^{(r)}(\prmtr) $

  \STATE Compute $\SteadyState(\prmtr_{\mathrm{next},\star})$ 
  and orthonormalize it against $B_1,\dots, B_r$ to obtain $B_{r+1}$

  \STATE Set $  V_{r+1}
  \gets
  V_r+
  \sspan_{\R}
  \{\SteadyState(\prmtr_{\mathrm{next},\star})\}$

  \STATE Set $r\gets r+1$

\ENDWHILE

\RETURN $V_r$ and $B_1,\ldots,B_r$

\end{algorithmic}
\end{algorithm}

\subsection{Positivity post-processing}
\label{subsec:post_processing}

For an online approximation
$\rho_\star^{(r)}(\prmtr)\in A_1$, we consider two optional positivity
corrections.

\subsubsection{Projection onto the quantum-state set}
\label{subsubsec:metric_proj}

For an arbitrary $\rho \in A_1$ with spectral decomposition
\begin{equation}
\rho
=
U \operatorname{diag}(\lambda_1,\ldots,\lambda_{\fdim}) U^\dagger
\quad
\text{for} \quad
\lambda_1\geq\cdots\geq\lambda_{\fdim},
\label{eq:projection_spectral_decomposition}
\end{equation}
the Frobenius metric projection of $\rho$ onto $S$ is defined as
$\Pi_S(\rho )
\coloneqq
\operatorname*{arg\,min}_{Y\in S}
\|
Y-\rho 
\| .$
Since $S$ is closed and convex, this projection exists and is unique.

Moreover, $\Pi_S(\rho)$
has the same eigenbasis as~$\rho$: 
for any $Y\in S$, observe $Z = (Z_{ij})_{i,j=1}^\fdim \coloneqq U^\dagger YU \in S$ by unitarity of $U$
and 
$Z_{\mathrm{D}} := \operatorname{diag}(Z_{11},\ldots,Z_{\fdim\fdim}) \in S$ due to $\tr Z_{\mathrm{D}}= \tr Z = 1$ and $e_j^\top Z_{\mathrm{D}} e_j \geq 0$ for $e_j$ the $j$-th standard basis vector. 
Hence, $\widehat Y := U Z_{\mathrm{D}} U^\dagger \in S$.
Denoting $\Lambda \coloneqq \operatorname{diag}(\lambda_1,\ldots,\lambda_{\fdim})$,
we deduce with unitary invariance of $\|\cdot\|$
and the definitions of $Z_\mathrm{D}, Z$ that 
$$
\|\widehat Y-\rho\|^2
= \|Z_\mathrm{D} -\Lambda\|^2
\leq \|Z-\Lambda\|^2
\leq\|Y-\rho\|^2.$$ 
Equality is attained for $Y$ diagonal in the eigenbasis of $\rho$;
see also \cite[Sec.~II]{SmolinGambettaSmith2012}. 

Hence, computing $\Pi_S(\rho )$ 
is reduced to projecting the eigenvalue vector 
$(\lambda_1,\ldots,\lambda_\fdim)^\top$ onto the simplex 
$\{y=(y_i) \in \R^\fdim : y_i \geq 0, \sum_i y_i = 1 \}$. 
To this end, we follow 
\cite{WangCarreiraPerpinan2013} and set
\begin{equation}
k_*
\,\coloneqq
\max
\Bigl\{
k\in\{1,\ldots,\fdim\}:
\lambda_k
+
\frac{1}{k}
\Bigl(
1-\sum_{j=1}^k\lambda_j
\Bigr)
>0
\Bigr\}
\quad \text{and} \quad
s
\coloneqq
\frac{1}{k_*}
\Bigl(
1-\sum_{j=1}^{k_*}\lambda_j
\Bigr)
.
\label{eq:projection_active_index_projection_shift}
\end{equation}
The projected eigenvalues are
\begin{equation}
\lambda_j^{\mathrm{proj}}
\coloneqq
\max\{\lambda_j+s,0\}
\qquad \text{for }
j=1,\ldots,\fdim,
\label{eq:projection_eigenvalues}
\end{equation}
and we obtain
\begin{equation}
\Pi_S(\rho)
=
U
\operatorname{diag}
\bigl(
\lambda_1^{\mathrm{proj}},\ldots,
\lambda_{\fdim}^{\mathrm{proj}}
\bigr)
U^\dagger.
\label{eq:projection_reconstruction}
\end{equation}

The projection algorithm is summarized in \Cref{alg:projection_onto_state_set}.
It yields the unique best approximation in the Frobenius norm,
but requires a complete spectral decomposition of an
$\fdim\times\fdim$ Hermitian matrix.
Hence, the algorithm may become prohibitively expensive for large \(\fdim\).

\begin{algorithm}[htbp]
\caption{Projection onto the quantum-state set}
\label{alg:projection_onto_state_set}
\begin{algorithmic}[1]
\REQUIRE Hermitian matrix $\rho\in A_1$
\ENSURE Metric projection $\rho_{\mathrm{proj}}=\Pi_S(\rho)\in S$
\STATE Compute the spectral decomposition \eqref{eq:projection_spectral_decomposition}.
\STATE  Compute $k_*$ and $s$ from
\eqref{eq:projection_active_index_projection_shift}. 
\STATE Compute the projected eigenvalues from
\eqref{eq:projection_eigenvalues}.
\STATE Reconstruct $\rho_{\mathrm{proj}}$ using
\eqref{eq:projection_reconstruction}.
\RETURN $\rho_{\mathrm{proj}}$
\end{algorithmic}
\end{algorithm}

\subsubsection{Partial spectral clipping}

To avoid a complete spectral decomposition, one may compute only the
$k$ algebraically smallest eigenpairs
\begin{equation}
(\lambda_\fdim,u_\fdim),\ldots,(\lambda_{\fdim-k+1},u_{\fdim-k+1})
\quad \text{for} \quad 
\lambda_\fdim\leq\cdots\leq \lambda_{\fdim-k+1} 
\label{eq:partial_clipping_eigenpairs}
\end{equation}
of a matrix $\rho\in A_1$, 
with $k \ll \fdim$.  
Then, compute
\begin{equation}
C_k(\rho)
\coloneqq
\!\!\!
\sum_{j=\fdim-k+1}^\fdim
\!\!\!
\max\{-\lambda_j,0\}\,
u_j u_j^\dagger
\quad \text{and} \quad 
s_k(\rho)
\coloneqq
\tr C_k(\rho)
=
\!\!\!
\sum_{j=\fdim-k+1}^\fdim
\!\!\!
\max\{-\lambda_j,0\}.
\label{eq:partial_clipping_corrections}
\end{equation}
The partially clipped approximation is then given by
\begin{equation}
\Pi_{\mathrm{clip},k}(\rho)
\coloneqq
\frac{\rho+C_k(\rho)}{1+s_k(\rho)}.
\label{eq:partial_clipping_map}
\end{equation}
The output is Hermitian and has unit trace. 
If all negative eigenvalues of
$\rho$ are included among the computed eigenpairs, then
$\Pi_{\mathrm{clip},k}(\rho)\in S$. In particular, this is certified when the
$k$ algebraically smallest eigenvalues are computed and $\lambda_{\fdim-k+1}\geq0$.
Otherwise, only the detected negative spectral components are corrected.
The procedure is summarized in \Cref{alg:partial_spectral_clipping}. 

\begin{algorithm}[htbp]
\caption{Partial spectral clipping}
\label{alg:partial_spectral_clipping}
\begin{algorithmic}[1]
\REQUIRE Hermitian matrix $\rho\in A_1$ and
$k\in\{1,\ldots,\fdim\}$ with $k\ll \fdim$
\ENSURE Partially corrected matrix
$\rho_{\mathrm{clip},k}\in A_1$
\STATE Compute the eigenpairs in
\eqref{eq:partial_clipping_eigenpairs}.
\STATE Compute $C_k(\rho)$ and $s_k(\rho)$ from
\eqref{eq:partial_clipping_corrections}.
\STATE Set
$\rho_{\mathrm{clip},k}\coloneqq\Pi_{\mathrm{clip},k}(\rho)$
using \eqref{eq:partial_clipping_map}.
\RETURN $\rho_{\mathrm{clip},k}$
\end{algorithmic}
\end{algorithm}

The exact projection requires a complete diagonalization, whereas partial
spectral clipping can be implemented using an iterative extremal-eigenvalue
solver and may be substantially cheaper when $k\ll\fdim$. 
Since both procedures require full-matrix reconstruction and spectral computations, 
we propose to only provide them optionally in the online phase.

\section{Error analysis and greedy convergence}
\label{sec:error-analysis}

We now analyze the approximation errors 
and greedy procedures introduced in the previous section. 
\Cref{subsec:postprocessing_effect} discusses the effect of positivity postprocessing.
In \Cref{subsec:state-error}, 
we relate the pointwise Lindbladian residual 
to the distance from its kernel and, 
under a pointwise non-degeneracy assumption, 
to the error with respect to the unique steady state.
We then recall in \Cref{subsec:weak-greedy} the weak greedy convergence framework 
for RBM and verify the validity of the two greedy selection rules.
Finally, in \Cref{subsec:width-convergence}, we combine analytic regularity of
the steady-state map with polynomial interpolation and width estimates to
derive (sub-)exponential convergence for the idealized full-domain greedy procedures,
and then relate this result to the finite training set used in \Cref{alg:greedy_reduced_basis}.

\subsection{Effect of positivity post-processing}\label{subsec:postprocessing_effect}

Let us quantify the additional approximation error introduced by the two
post-processing procedures from \Cref{subsec:post_processing}. 

\begin{lemma}[Error after projection onto the state set]
\label[lemma]{prop:projection_postprocessing_error}
For $\rho\in A_1$ and
$\SteadyState(\prmtr)\in S$,
it holds that 
\begin{equation*}
\left\|
\Pi_S(\rho)
-
\SteadyState(\prmtr)
\right\| 
\leq
\left\|
\rho-\SteadyState(\prmtr)
\right\| .
\end{equation*}
\end{lemma}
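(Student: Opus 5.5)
The plan is to use the standard nonexpansiveness of metric projections onto closed convex sets in a real Hilbert space. By \Cref{lem:geometric_properties}, $S$ is a nonempty compact convex subset of the real Hilbert space $(\mathcal H,\langle\cdot,\cdot\rangle)$. Therefore $\Pi_S$ is well defined and single-valued, as already noted in \Cref{subsubsec:metric_proj}. The key additional observation is that $\SteadyState(\prmtr)\in S$ is a fixed point of $\Pi_S$, i.e.\ $\Pi_S(\SteadyState(\prmtr))=\SteadyState(\prmtr)$. The claimed inequality is then exactly $\|\Pi_S(\rho)-\Pi_S(\SteadyState(\prmtr))\|\leq\|\rho-\SteadyState(\prmtr)\|$.

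To keep the argument self-contained, I will first establish the variational characterization of the projection. Write $P\coloneqq\Pi_S(\rho)$. For every $Y\in S$ and every $t\in[0,1]$, convexity gives $P+t(Y-P)\in S$. Expanding $\|P+t(Y-P)-\rho\|^2\geq\|P-\rho\|^2$, dividing by $t>0$ and letting $t\to0^+$ yields
\[
\langle \rho-P,\,Y-P\rangle\leq 0\qquad\text{for all }Y\in S .
\]
It matters here that $\mathcal H$ is a real inner product space with $\langle X,Y\rangle=\tr XY\in\R$, so no real parts are needed.

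Now take $Y=\SteadyState(\prmtr)$ and decompose
\[
\rho-\SteadyState(\prmtr)=(\rho-P)+(P-\SteadyState(\prmtr)).
\]
This gives
\[
\|\rho-\SteadyState(\prmtr)\|^2=\|\rho-P\|^2+\|P-\SteadyState(\prmtr)\|^2+2\langle\rho-P,\,P-\SteadyState(\prmtr)\rangle .
\]
By the variational inequality, the cross term is nonnegative. Dropping both the cross term and $\|\rho-P\|^2$ gives $\|P-\SteadyState(\prmtr)\|\leq\|\rho-\SteadyState(\prmtr)\|$, which is the claim. The hypothesis $\rho\in A_1$ is not actually needed beyond guaranteeing that the spectral formula of \Cref{alg:projection_onto_state_set} applies. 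The inequality holds for any $\rho\in\mathcal H$.

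No step is a real obstacle. The only point requiring care is confirming that the Frobenius inner product on Hermitian matrices is real-valued and symmetric, so that the real Hilbert-space projection theorem applies verbatim. This follows from $\tr(XY)=\overline{\tr((XY)^\dagger)}=\overline{\tr(YX)}=\overline{\tr(XY)}$.
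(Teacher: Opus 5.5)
Your proof is correct and follows the same route as the paper, which cites nonexpansiveness of the metric projection onto the closed convex set $S$ (Bauschke--Combettes, Prop.~4.8) and uses $\Pi_S(\SteadyState(\prmtr))=\SteadyState(\prmtr)$; you instead derive the special case you need directly from the variational inequality $\langle \rho-\Pi_S(\rho),\,Y-\Pi_S(\rho)\rangle\leq 0$ for $Y\in S$. You are also right that the hypothesis $\rho\in A_1$ is not needed for the inequality itself.
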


\begin{proof}
Metric projections onto a nonempty closed convex subset 
of a Hilbert space are nonexpansive 
by \cite[Prp.~4.8]{BauschkeCombettes2017}.
Hence, for $\Pi_S$ from \Cref{subsubsec:metric_proj} and $S$ convex,
we have $\|\Pi_S(X)-\Pi_S(Y)\|\leq\|X-Y\|$ for all $X,Y\in\mathcal{H}$. 
With $\Pi_S\bigl(\SteadyState(\prmtr)\bigr) = \SteadyState(\prmtr) \in S$,
the assertion follows.
\end{proof}

Thus, exact projection onto $S$ does not increase the state approximation
error.

\begin{lemma}[Error after partial spectral clipping]
\label[lemma]{prop:partial_clipping_error}
Let $\rho\in A_1$, $\SteadyState(\prmtr)\in S$, 
and $\Pi_{\mathrm{clip},k}(\rho)$ the partially clipped approximation from
\eqref{eq:partial_clipping_map}. 
Then, it holds that 
\begin{equation*}
\left\|
\Pi_{\mathrm{clip},k}(\rho)
-
\SteadyState(\prmtr)
\right\| 
\, \leq \, 
\bigl(2+\sqrt{n_{\mathrm{clip}}}\bigr)
\left\|
\rho-\SteadyState(\prmtr)
\right\| ,
\end{equation*}
where $n_{\mathrm{clip}} $
 is the number of actually clipped eigenvalues, 
i.e.,
\begin{equation*}
n_{\mathrm{clip}} 
\coloneqq
\rank C_k(\rho)
=
\operatorname{card}
\bigl\{
j\in\{\fdim-k+1,\ldots,\fdim\}:
\lambda_j<0
\bigr\} 
\leq k.
\end{equation*}
\end{lemma}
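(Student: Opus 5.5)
Write $\sigma\coloneqq\SteadyState(\prmtr)$, $C\coloneqq C_k(\rho)$ and $s\coloneqq s_k(\rho)=\tr C\geq 0$. The plan is to split the error into the clipping correction and the trace renormalization. Since $\Pi_{\mathrm{clip},k}(\rho)=(\rho+C)/(1+s)$, we have
\[
\Pi_{\mathrm{clip},k}(\rho)-\sigma
=\frac{(\rho-\sigma)+C-s\,\sigma}{1+s}.
\]
Using $1+s\geq1$ and the triangle inequality, this gives
\[
\|\Pi_{\mathrm{clip},k}(\rho)-\sigma\|\leq\|\rho-\sigma\|+\|C\|+s\|\sigma\|.
\]
By \Cref{lem:geometric_properties}, $\|\sigma\|\leq1$, so it remains to bound $\|C\|$ and $s$ by multiples of $\|\rho-\sigma\|$.

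The key step is to control the negative eigenvalues of $\rho$ through the distance to the psd cone. Let $\lambda_j<0$ be one of the $n_{\mathrm{clip}}$ clipped eigenvalues with unit eigenvector $u_j$. Since $\sigma\succeq0$,
\[
|\lambda_j|=-u_j^\dagger\rho u_j\leq u_j^\dagger(\sigma-\rho)u_j .
\]
Summing over the orthonormal clipped eigenvectors, and letting $P$ be the orthogonal projector onto their span (rank $n_{\mathrm{clip}}$), we get
\[
s=\sum_j|\lambda_j|\leq\tr\bigl(P(\sigma-\rho)\bigr)=\langle P,\sigma-\rho\rangle\leq\|P\|\,\|\sigma-\rho\|=\sqrt{n_{\mathrm{clip}}}\,\|\rho-\sigma\|.
\]

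For $\|C\|$ there is a sharper bound. Because the $u_j$ are orthonormal,
\[
\|C\|^2=\sum_j\lambda_j^2 .
\]
The clipped eigenvalues are among all negative eigenvalues of $\rho$, so $\|C\|\leq\|\rho_-\|$, where $\rho_-$ is the negative part of $\rho$. Since $\rho_+\succeq0$ is the Frobenius projection of $\rho$ onto $\mathcal H_+$ and $\sigma\in\mathcal H_+$,
\[
\|\rho_-\|=\dist(\rho,\mathcal H_+)\leq\|\rho-\sigma\|.
\]
Alternatively, applying Cauchy--Schwarz on each diagonal entry of $P(\sigma-\rho)P$ gives the same bound.

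Combining the three estimates yields
\[
\|\Pi_{\mathrm{clip},k}(\rho)-\sigma\|\leq\bigl(1+1+\sqrt{n_{\mathrm{clip}}}\bigr)\|\rho-\sigma\|,
\]
which is the claim, and $n_{\mathrm{clip}}\leq k$ holds trivially. The only nontrivial point is bounding the trace mass $s$. Here the loss of $\sqrt{n_{\mathrm{clip}}}$ comes from passing from the $\ell^1$ sum of eigenvalues to the Frobenius norm, via $\|P\|=\sqrt{\operatorname{rank}P}$. I would check that this pairing argument, and not a cruder bound, is what produces exactly the constant $2+\sqrt{n_{\mathrm{clip}}}$.
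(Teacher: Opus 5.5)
Your proof is correct and gives exactly the constant $2+\sqrt{n_{\mathrm{clip}}}$. The splitting $\Pi_{\mathrm{clip},k}(\rho)-\sigma=\bigl((\rho-\sigma)+C-s\sigma\bigr)/(1+s)$ and the final triangle inequality are the same as in the paper, but you bound the two correction terms differently.

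The paper first proves $\|C_k\|\leq\|\rho-\SteadyState(\prmtr)\|$ with the Hoffman--Wielandt inequality. It pairs the sorted eigenvalues $\lambda_j$ of $\rho$ with the sorted eigenvalues $\mu_j\geq0$ of $\SteadyState(\prmtr)$ and uses $\max\{-\lambda_j,0\}\leq|\lambda_j-\mu_j|$. It then gets $s_k=\tr C_k\leq\sqrt{n_{\mathrm{clip}}}\,\|C_k\|$ by Cauchy--Schwarz on the $n_{\mathrm{clip}}$ nonzero eigenvalues of $C_k$, so the trace bound is inherited from the norm bound.

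You instead bound $s$ directly, by testing $\sigma-\rho$ against the projector $P$ onto the clipped eigenspace, which needs only $u_j^\dagger\sigma u_j\geq0$. You bound $\|C\|$ separately, either through $\dist(\rho,\mathcal H_+)=\|\rho_-\|$ (standard, but worth a reference) or through the diagonal-entry argument.

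The paper's route is shorter, given a cited eigenvalue perturbation theorem. Yours is self-contained and elementary, and needs no ordering or pairing of eigenvalues. Your diagonal-entry variant also gives sharper intermediate bounds: with $E\coloneqq\rho-\sigma$, you get $s\leq\sqrt{n_{\mathrm{clip}}}\,\|PEP\|$ and $\|C\|\leq\|PEP\|$. This shows that only the part of the error living on the clipped eigenspace enters the two correction terms.
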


\begin{proof}
For brevity, set
$C_k\coloneqq C_k(\rho)$,
$s_k\coloneqq s_k(\rho)$ 
and
$E\coloneqq\rho-\SteadyState(\prmtr)$.
Let $\mu_1\geq\cdots\geq\mu_\fdim\geq0$ denote the eigenvalues of
$\SteadyState(\prmtr)$. Since
$\max\{-\lambda_j,0\}\leq|\lambda_j-\mu_j|$,
the Hoffman--Wielandt inequality
\cite[Thm.~6.3.5 and Cor.~6.3.8]{HornJohnson2012} yields
\[
\begin{aligned}
\|C_k\|^2
=
\sum_{j=\fdim-k+1}^{\fdim}
\max\{-\lambda_j,0\}^2 
\leq
\sum_{j=1}^{\fdim}|\lambda_j-\mu_j|^2
\leq
\|E\|^2.
\end{aligned}
\]
Moreover, $C_k$ is positive semidefinite and has rank
$n_{\mathrm{clip}}$. 
Hence, we have 
\[
s_k
=
\tr C_k
\leq
\sqrt{n_{\mathrm{clip}}}\,\|C_k\|
\leq
\sqrt{n_{\mathrm{clip}}}\,\|E\|.
\]
Finally, using \eqref{eq:partial_clipping_map},
$\|\SteadyState(\prmtr)\|\leq1$ and $1+s_k\geq1$, we obtain
\[
\begin{aligned}
\left\|
\Pi_{\mathrm{clip},k}(\rho)-\SteadyState(\prmtr)
\right\|
=
\frac{
\left\|E+C_k-s_k\SteadyState(\prmtr)\right\|
}{1+s_k} 
\leq
\|E\|+\|C_k\|+s_k\|\SteadyState(\prmtr)\| 
\leq
\bigl(2+\sqrt{n_{\mathrm{clip}}}\bigr)\|E\|,
\end{aligned}
\]
which proves the assertion.
\end{proof}

To summarize, both positivity post-processing procedures 
only scale the steady state approximation error by a prefactor, 
which is $1$ for the exact projection onto the quantum state set $S$, 
and at most $2+\sqrt{k}$ for partial clipping.

\subsection{Residual-based state-error estimates}
\label{subsec:state-error}

In this subsection, 
let us fix a reduced space $V \subseteq\mathcal H$,
constructed by POD or greedy procedures.
We denote the corresponding reduced approximation by
$\rho_{\star}^{(r)}(\prmtr)$ for
$\star\in\{\mathrm{eig},\mathrm{aff}\}$ obtained by 
\eqref{eq:trace_normalized_reduced_eigenvector} or \eqref{eq:reduced_affine_approximation}, respectively. 
The goal of this subsection is to relate the RB approximation error to the residual.

For $\prmtr\in\prmtrSet$, recall
$\mathcal A(\prmtr)=\Lindbladian(\prmtr)^\dagger\Lindbladian(\prmtr)$ and
$\ker\Lindbladian(\prmtr)=\ker\mathcal A(\prmtr)$.  
We introduce 
\begin{align}
\gamma(\prmtr)
\coloneqq
\min_{\substack{X\perp\ker\Lindbladian(\prmtr)\\ \|X\|=1}}
\langle X,\mathcal A(\prmtr)X\rangle
\qquad \text{and} \qquad
\beta(\prmtr)
\coloneqq
\min_{\substack{X\in A_0\\ \|X\|=1}}
\|\Lindbladian(\prmtr)X\|.
\label{eq:gap_and_trace_zero_stability}
\end{align}
Thus, $\gamma(\prmtr)$ is the smallest positive eigenvalue of
$\mathcal A(\prmtr)$,
which represents the spectral gap of $\mathcal A(\prmtr)$
and controls the distance to the kernel, 
whereas $\beta(\prmtr)$ is the $\inf$-$\sup$-constant 
on the trace-zero space and 
directly controls trace-zero errors.

By \Cref{lem:Lindbladian_properties},
$\ker\Lindbladian(\prmtr)$ contains a steady state. 
 Hence, the restriction of
the trace functional to $\ker\Lindbladian(\prmtr)$ is surjective and has kernel
$A_0\cap\ker\Lindbladian(\prmtr)$.  
Applying the rank--nullity theorem to this restricted trace functional 
yields $ 1 = \dim \ker \Lindbladian(\prmtr) - \dim (A_0 \cap \ker\Lindbladian(\prmtr)) $.
Consequently, we obtain
\begin{equation}
\beta(\prmtr)>0
\quad\Longleftrightarrow\quad
A_0\cap\ker\Lindbladian(\prmtr)=\{0\}
\quad\Longleftrightarrow\quad
\Lindbladian(\prmtr)\text{ has a unique steady state}.
\label{eq:equivalence_unique_steady_state}
\end{equation}

To relate $\beta(\prmtr)$ to $\gamma(\prmtr)$, 
we introduce the auxiliary minimal angle $\psi(\prmtr)\in[0,\pi/2]$ 
between $A_0$ and $\ker\Lindbladian(\prmtr)$ by
\begin{equation*}
\sin\psi(\prmtr)
\coloneqq
\inf_{\substack{X\in A_0\\\|X\|=1}}
\operatorname{dist}\bigl(X,\ker\Lindbladian(\prmtr)\bigr).
\end{equation*}

\begin{lemma}[Gap--angle bound for trace-zero stability]
\label[lemma]{lem:beta_gap_angle_relation}
For every $\prmtr\in\prmtrSet$, it holds that
\begin{equation}
    \beta(\prmtr)
\geq
\sqrt{\gamma(\prmtr)}\sin\psi(\prmtr).
\label{eq:beta_gamma_angle_rel}
\end{equation}
If $\Lindbladian(\prmtr)$ has a unique steady state, 
then it holds that 
\begin{equation*}
\sin\psi(\prmtr)
=
\frac{1}{\sqrt{\fdim}\,\|\SteadyState(\prmtr)\|}
\in
\left[\frac{1}{\sqrt{\fdim}},1\right]
\qquad \text{and} \qquad
\beta(\prmtr)
\geq
\frac{\sqrt{\gamma(\prmtr)}}
{\sqrt{\fdim}\,\|\SteadyState(\prmtr)\|}
\geq
\sqrt{\frac{\gamma(\prmtr)}{\fdim}}.
\end{equation*}
\end{lemma}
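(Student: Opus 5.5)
The plan has two parts. First comes the general inequality \eqref{eq:beta_gamma_angle_rel}, via an orthogonal splitting. Then, under uniqueness, we compute $\sin\psi(\prmtr)$ explicitly, which also settles the two-sided bound. Throughout, write $K\coloneqq\ker\Lindbladian(\prmtr)$ and let $P_K$ and $P_K^\perp$ denote the orthogonal projections onto $K$ and $K^\perp$.

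\emph{General bound.} Fix $X\in A_0$ with $\|X\|=1$ and split $X=P_KX+P_K^\perp X$. Since $\Lindbladian(\prmtr)$ vanishes on $K$, we have
\[
\|\Lindbladian(\prmtr)X\|^2=\|\Lindbladian(\prmtr)P_K^\perp X\|^2=\langle P_K^\perp X,\mathcal A(\prmtr)P_K^\perp X\rangle .
\]
By the definition of $\gamma(\prmtr)$ in \eqref{eq:gap_and_trace_zero_stability}, applied to the normalized vector $P_K^\perp X/\|P_K^\perp X\|$ (the case $P_K^\perp X=0$ being trivial), the right-hand side is at least $\gamma(\prmtr)\|P_K^\perp X\|^2$. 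Moreover, $\|P_K^\perp X\|=\dist(X,K)\geq\sin\psi(\prmtr)$. Taking square roots and the minimum over $X$ gives \eqref{eq:beta_gamma_angle_rel}. We should also remark that $\gamma(\prmtr)$ is well defined as a positive minimum, except in the degenerate case $K=\mathcal H$, which cannot occur unless $\Lindbladian(\prmtr)=0$. In that case $\beta(\prmtr)=0=\sin\psi(\prmtr)$ by convention, so the inequality still holds.

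\emph{Unique steady state.} Now $K=\sspan_\R\{\SteadyState\}$ with $\SteadyState=\SteadyState(\prmtr)$, and we set $u\coloneqq\SteadyState/\|\SteadyState\|$. For $X\in A_0$ with $\|X\|=1$ we get $\dist(X,K)^2=1-\langle X,u\rangle^2$. Hence $\sin^2\psi=1-\sup_{X\in A_0,\|X\|=1}\langle X,u\rangle^2$, and that supremum equals $\|P_{A_0}u\|^2$. By \Cref{lem:geometric_properties}, $A_0=\sspan_\R\{I\}^\perp$, so $P_{A_0}u=u-\langle u,I\rangle I/\fdim$. Using $\tr\SteadyState=1$, this gives
\[
\|P_{A_0}u\|^2=1-\frac{\langle u,I\rangle^2}{\fdim}=1-\frac{1}{\fdim\|\SteadyState\|^2}.
\]
Therefore $\sin\psi(\prmtr)=1/(\sqrt{\fdim}\|\SteadyState(\prmtr)\|)$. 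The inclusion in $[1/\sqrt{\fdim},1]$ then follows from $1/\sqrt{\fdim}\le\|\SteadyState\|\le1$ in \Cref{lem:geometric_properties}. Substituting into \eqref{eq:beta_gamma_angle_rel} and using $\|\SteadyState\|\le1$ gives the final chain of inequalities.

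The only step needing care is the identity $\sup_{X\in A_0,\|X\|=1}\langle X,u\rangle=\|P_{A_0}u\|$. This is the standard fact that the maximum is attained at $X=P_{A_0}u/\|P_{A_0}u\|$, or approached by any unit vector if $P_{A_0}u=0$. The latter is impossible here, since $u\notin\sspan_\R\{I\}$ unless $\SteadyState=I/\fdim$. In that exceptional case $\|\SteadyState\|=1/\sqrt{\fdim}$, so the formula predicts $\sin\psi=1$, which agrees with $K\perp A_0$. The remaining steps are routine.
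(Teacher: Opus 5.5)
Your proposal is correct and follows the paper's proof essentially step for step. For the general bound you use the same splitting into $\ker\Lindbladian(\prmtr)$ and its orthogonal complement, together with $\dist(X,\ker\Lindbladian(\prmtr))\ge\sin\psi(\prmtr)$. In the unique case you use the same identity $\sin^2\psi(\prmtr)=1-\|P_{A_0}u\|^2$ with $u=\SteadyState(\prmtr)/\|\SteadyState(\prmtr)\|$; the only difference is that you write out $P_{A_0}u=u-\langle u,I\rangle I/\fdim$ explicitly, whereas the paper uses $1-\|P_{A_0}u\|^2=|\langle u,I/\sqrt{\fdim}\rangle|^2$.

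Your remarks on the degenerate cases $\Lindbladian(\prmtr)=0$ and $\SteadyState(\prmtr)=I/\fdim$ go beyond what the paper says and do no harm. One small correction: when $\Lindbladian(\prmtr)=0$, the equalities $\beta(\prmtr)=0=\sin\psi(\prmtr)$ hold as facts, not by convention; only $\gamma(\prmtr)$, a minimum over an empty set, needs a convention there.
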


\begin{proof}
For $X\in A_0$, decompose $X = X_{\parallel} + X_{\perp}$ 
with $X_{\parallel} \in \ker\Lindbladian(\prmtr)$ and $X_{\perp} \in \ker\Lindbladian(\prmtr)^\perp$. 
Note that $\|\Lindbladian(\prmtr)X\|^2 = \langle X_\perp, \mathcal{A}(\prmtr)X_\perp\rangle $.
Hence, the definition of $\gamma(\prmtr)$ and the minimal angle $\psi(\prmtr)$ in
\eqref{eq:gap_and_trace_zero_stability} give
\[
\|\Lindbladian(\prmtr)X\|^2
\geq
\gamma(\prmtr)
\operatorname{dist}\bigl(X,\ker\Lindbladian(\prmtr)\bigr)^2
\geq
\gamma(\prmtr)\, \sin^2\!\psi(\prmtr)\, \|X\|^2.
\]
Taking the minimum over $X\in A_0$ with $\|X\|=1$ proves \eqref{eq:beta_gamma_angle_rel}.

In the unique steady-state case, 
we set $u \coloneqq \SteadyState(\prmtr) / \|\SteadyState(\prmtr)\|$
and observe that 
$\ker\Lindbladian(\prmtr)  
= \sspan\{\SteadyState(\prmtr)\}
= \sspan \{u\}$.
Using this fact and $\|u\|=1$,
we deduce that 
$$
\sin^2\!\psi(\prmtr)
=
\inf_{\substack{X\in A_0\\\|X\|=1}}
\dist(X, \ker \Lindbladian(\prmtr))^2
=
\inf_{\substack{X\in A_0\\\|X\|=1}}
1- |\langle X,u\rangle|^2
=
1- \sup_{\substack{X\in A_0\\\|X\|=1}}
|\langle X,u\rangle|^2.
$$ 
Moreover, 
observe that 
$\sup_{\substack{X\in A_0,\|X\|=1}}
|\langle X,u\rangle|^2
= \|P_{A_0}u\|^2
$,
where $P_{A_0}$ is the orthogonal projector onto $A_0$.
Hence, by setting 
$n \coloneqq I / \sqrt{\fdim}$,
which satisfies $n \perp A_0$, 
we obtain 
$1-\|P_{A_0}u\|^2 = |\langle u, n \rangle|^2$.
Combining these considerations and using $\tr\SteadyState(\prmtr)=1$,
we see that 
\[
\begin{aligned}
\sin^2\!\psi(\prmtr)
=
1- \sup_{\substack{X\in A_0\\\|X\|=1}}
|\langle X,u\rangle|^2
=
1-\|P_{A_0}u\|^2  
=
|\langle u,n\rangle|^2
=
\frac{1}{\fdim\|\SteadyState(\prmtr)\|^2}.
\end{aligned}
\]
The rest follows from $\|\SteadyState(\prmtr)\|\in [1/\sqrt{\fdim},1]$;
see \Cref{lem:geometric_properties}.
\end{proof}

Thus, the gap and the angle provide 
a possibly coarse lower bound 
for $\beta(\prmtr)$,
and we will primarily use $\beta(\prmtr)$ in the following error estimates.

\begin{proposition}[Residual-based state-error estimate]
\label[proposition]{prop:generic_residual_state_error}
Let $\prmtr\in \prmtrSet$ satisfy $\dim \ker\Lindbladian(\prmtr) = 1$, 
and let $X\in\mathcal H$ satisfy $\tr X\neq0$. 
Then, it holds that 
\begin{equation*}
\Bigl\|
\SteadyState(\prmtr)
-
\frac{X}{\tr X}
\Bigr\|
\,\leq\,
\frac{
\|\Lindbladian(\prmtr)X\|
}{
\beta(\prmtr)|\tr X|
}.
\end{equation*}
\end{proposition}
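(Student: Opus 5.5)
The plan is to reduce the estimate to the definition of the trace-zero stability constant $\beta(\prmtr)$ in \eqref{eq:gap_and_trace_zero_stability}. Set $Y \coloneqq X/\tr X$, which lies in $A_1$ since $\tr X\neq 0$. Since $\dim\ker\Lindbladian(\prmtr)=1$, \Cref{lem:Lindbladian_properties} guarantees a unique steady state $\SteadyState(\prmtr)\in S\subseteq A_1$, and by \eqref{eq:equivalence_unique_steady_state} we have $\beta(\prmtr)>0$.

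The key observation is that the error $E\coloneqq Y-\SteadyState(\prmtr)$ is a difference of two unit-trace matrices, so $E\in A_0$. By linearity and $\Lindbladian(\prmtr)\SteadyState(\prmtr)=0$,
\[
\Lindbladian(\prmtr)E=\Lindbladian(\prmtr)Y=\frac{\Lindbladian(\prmtr)X}{\tr X}.
\]
By definition of $\beta(\prmtr)$ as a minimum over normalized trace-zero matrices and by homogeneity, $\|\Lindbladian(\prmtr)E\|\geq\beta(\prmtr)\|E\|$ for every $E\in A_0$ (trivially so if $E=0$). Combining the two relations gives
\[
\|E\|\leq\frac{\|\Lindbladian(\prmtr)X\|}{\beta(\prmtr)\,|\tr X|},
\]
which is the assertion.

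There is no real obstacle. The only points to check are that the minimum defining $\beta(\prmtr)$ is attained, which holds because the unit sphere of the finite-dimensional space $A_0$ is compact, and that uniqueness is what makes $\beta(\prmtr)$ positive, so that the division is legitimate. Optionally, one can combine this estimate with \Cref{lem:beta_gap_angle_relation} to obtain the gap-based form with $\sqrt{\gamma(\prmtr)/\fdim}$ in place of $\beta(\prmtr)$.
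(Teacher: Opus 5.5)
Your proposal is correct and follows the paper's own proof essentially verbatim. The error $E$ lies in $A_0$ as a difference of unit-trace matrices, $\Lindbladian(\prmtr)E$ equals $\pm\Lindbladian(\prmtr)X/\tr X$, and the definition of $\beta(\prmtr)$ gives $\beta(\prmtr)\|E\|\leq\|\Lindbladian(\prmtr)E\|$. Your remark that $\beta(\prmtr)>0$ follows from uniqueness via \eqref{eq:equivalence_unique_steady_state}, which makes the division legitimate, spells out a point the paper leaves implicit.
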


\begin{proof}
Set $E
\coloneqq
\SteadyState(\prmtr)
-{X}/{\tr X}
.$
Since both terms have unit trace, $E\in A_0$.  
The definition of
$\beta(\prmtr)$ and $\Lindbladian(\prmtr)\SteadyState(\prmtr)=0$ 
give
$\beta(\prmtr)\|E\|
\leq
\|\Lindbladian(\prmtr)E\|
=
{\|\Lindbladian(\prmtr)X\|}/{|\tr X|},$
which proves the claim.
\end{proof}

Finally, 
we concretize \Cref{prop:generic_residual_state_error}
in the context of steady state RB approximation. 

\begin{corollary}[State error for the reduced formulations]
\label[corollary]{cor:online_approx_steady_state_error}
Let $\prmtr\in \prmtrSet$ satisfy $\dim \ker \Lindbladian(\prmtr) =1$.
Let $V \subseteq \mathcal{H}$ be an RB space constructed by one of the methods in \Cref{subsubsec:POD_vs_greedy}.
Consider $ \rho_{\mathrm{aff}}^{(r)}(\prmtr) $
from \eqref{eq:reduced_affine_approximation},
and assume that $ \rho_{\mathrm{eig}}^{(r)}(\prmtr)$ from \eqref{eq:trace_normalized_reduced_eigenvector} is well-defined. 
For $\star \in \{ \mathrm{eig}, \mathrm{aff}\}$, 
it holds that 
\begin{equation*}
\|
\SteadyState(\prmtr)-\rho_{\star}^{(r)}(\prmtr)
\|
\,\leq\,
\frac{
\|\Lindbladian(\prmtr)\rho_{\star}^{(r)}(\prmtr)\|
}{
\beta(\prmtr)
}.
\end{equation*}
\end{corollary}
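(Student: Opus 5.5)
The plan is to derive the corollary directly from \Cref{prop:generic_residual_state_error}, applied with a suitable choice of $X$ in each of the two formulations. In both cases the reduced approximation already has unit trace, so the factor $|\tr X|$ in the proposition will equal one.

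For $\star=\mathrm{aff}$, I will take $X\coloneqq\rho_{\mathrm{aff}}^{(r)}(\prmtr)\in V\subseteq\mathcal H$. By construction, $\tr X=t_r^\top x_{\mathrm{aff}}^{(r)}(\prmtr)=1$, because this is the constraint in \eqref{eq:reduced_affine_residual_problem} enforced by the saddle-point system \eqref{eq:reduced_affine_KKT_system}. The system is solvable because $V\cap A_1\neq\emptyset$, which was noted for both greedy and POD spaces. Consequently $X/\tr X=\rho_{\mathrm{aff}}^{(r)}(\prmtr)$, and the proposition yields the bound immediately with $|\tr X|=1$.

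For $\star=\mathrm{eig}$, the assumption that $\rho_{\mathrm{eig}}^{(r)}(\prmtr)$ is well-defined means $\tr\rho_{\mathrm n}^{(r)}(\prmtr)\neq0$. There are two equivalent ways to proceed. The first is to apply the proposition to $X\coloneqq\rho_{\mathrm{eig}}^{(r)}(\prmtr)$ itself, which has unit trace by \eqref{eq:trace_normalized_reduced_eigenvector}. The second is to apply it to $X=\rho_{\mathrm n}^{(r)}(\prmtr)$ and then use linearity of $\Lindbladian(\prmtr)$:
\[
\frac{\|\Lindbladian(\prmtr)\rho_{\mathrm n}^{(r)}(\prmtr)\|}{|\tr\rho_{\mathrm n}^{(r)}(\prmtr)|}
=\|\Lindbladian(\prmtr)\rho_{\mathrm{eig}}^{(r)}(\prmtr)\|.
\]
Either route gives the stated estimate.

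I expect no real obstacle in this argument. The only points to check are Hermiticity of the reduced approximations, which holds because $V\subseteq\mathcal H$ and the coefficients are real, and that the hypothesis $\dim\ker\Lindbladian(\prmtr)=1$ gives $\beta(\prmtr)>0$ via \eqref{eq:equivalence_unique_steady_state}. This positivity makes the right-hand side finite and is exactly the hypothesis that \Cref{prop:generic_residual_state_error} requires.
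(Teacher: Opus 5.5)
Your proposal is correct and matches the paper's argument. The paper gives no separate proof and treats the corollary as an immediate consequence of the generic residual-based state-error proposition applied with $X=\rho_\star^{(r)}(\prmtr)$, which has unit trace, so $|\tr X|=1$; your side checks (solvability of the saddle-point system, Hermiticity, and $\beta(\prmtr)>0$ under uniqueness) are exactly the ones needed.
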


\subsection{Weak greedy convergence framework for RBM and validity of the selection rules}
\label{subsec:weak-greedy}

We would like to establish a priori convergence results 
for the greedy construction under some further conditions that will be specified.
We first consider an idealized version in which the maximization is performed over the full
parameter set~$\prmtrSet$, and return to the finite training set from
\Cref{alg:greedy_reduced_basis} in \Cref{subsec:width-convergence}.

\subsubsection{Generic weak greedy convergence framework for RBM}

To begin with, let us recall the weak greedy convergence result from \cite{DeVorePetrovaWojtaszczyk2013,BinevEtAl2011,BuffaEtAl2012}.

\begin{theorem}[Weak greedy convergence rates for RBM {\cite[Cor.~3.3]{DeVorePetrovaWojtaszczyk2013}}]
\label[theorem]{thm:weak_greedy_convergence_rates}
Suppose there exists a \emph{solution map} $u:\prmtrSet\to\mathcal H$,
such that the solution set $\mathcal M=u(\prmtrSet)$ is compact and contained
in the closed unit ball of~$\mathcal H$.
Let $(\prmtr_r)_{r\in \N}$ be a sequence of snapshot parameters and 
$(V_r)_{r\in \N}$  a sequence of RB spaces, 
so that
$V_{r+1}
=
V_r+\sspan_{\R}\{u(\prmtr_{r})\}.$
Moreover, assume that the following conditions hold:
\begin{enumerate}
\item \label{item:weak_greedy}
\emph{(Weak greedy selection)}
Suppose that there exists a constant $\vartheta\in(0,1]$,
called the \emph{weakness parameter}, 
such that
\begin{equation*}
\dist(u(\prmtr_r), V_r)
\, \geq \, 
\vartheta
\sup_{\prmtr\in\prmtrSet}
\dist(u(\prmtr), V_r)
\qquad \text{for all }
r \in \N.
\end{equation*}
\item \label{item:rate_decay} 
\emph{(Decaying Kolmogorov-$n$-width)}
There exist constants $\alpha_0, \alpha_1,\alpha_2>0$, such that
\[
d_n(\mathcal M)
\, \leq \,
\alpha_0 \exp \bigl(-\alpha_1 n^{\alpha_2} \bigr)
\qquad
\text{for all }n\in \N,
\]
where $d_n(\mathcal M)$ is the Kolmogorov-$n$-width of $\mathcal{M}$ from \Cref{def:sol_set}.

\end{enumerate}
Then, with $C_1 \coloneqq \sqrt{2 \alpha_0} / \vartheta$ and
$C_2 \coloneqq 2^{-1-2\alpha_2}\alpha_1$, 
it holds that
\begin{equation}
\sup_{\prmtr\in\prmtrSet}
\dist\bigl(u(\prmtr),V_{1+r}\bigr)
\,\leq\,
C_1\exp\bigl(-C_2\,  r^{\alpha_2}\bigr)
\qquad\text{for all }r\in\N.
\label{eq:weak_greedy_exponential_rate}
\end{equation}
In particular, the greedy procedure converges at an exponential rate if $\alpha_2=1$, 
or at a sub-exponential rate if $0<\alpha_2<1$.
\end{theorem}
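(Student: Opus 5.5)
This is the DeVore--Petrova--Wojtaszczyk result, so the plan is to reduce it to the Gram--Schmidt/determinant argument that underlies their Corollary~3.3 and then do the index bookkeeping.

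\textbf{Step 1 (greedy sequence and triangular matrix).} Let $\sigma_r \coloneqq \sup_{\prmtr\in\prmtrSet}\dist(u(\prmtr),V_r)$. If the initial space $V_1$ is nontrivial, we prepend its basis to the snapshot sequence; this only shifts indices, which is why the statement is phrased for $V_{1+r}$.

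Let $f_j \coloneqq u(\prmtr_j)$, and let $\phi_j$ be the Gram--Schmidt orthonormalization of the $f_j$ relative to the nested spaces. In the basis $(\phi_j)$ the vectors $f_j$ are represented by a lower-triangular infinite matrix $(a_{ij})$. Its entries satisfy:
\begin{itemize}
\item $a_{jj}=\dist(f_j,V_j)\ge\vartheta\sigma_j$, by weak greedy selection;
\item $\sum_{i\ge j}a_{ij}^2\le\sigma_j^2$, since the tail of each row is the residual of $f_i$ after projection onto $V_j$;
\item the same row-tail bound holds for every $u(\prmtr)$, not only for the selected snapshots.
\end{itemize}

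\textbf{Step 2 (key lemma).} For any $m\ge1$ and $0\le n<m$,
\[
\prod_{k=1}^m a_{N+k,N+k}^2\le \Bigl(\frac mn\Bigr)^n\Bigl(\frac{m}{m-n}\Bigr)^{m-n}\sigma_{N+1}^{2n}\,d_n(\mathcal M)^{2(m-n)}.
\]
To prove it, take an $n$-dimensional space $W$ nearly realizing $d_n(\mathcal M)$ and consider the $m\times m$ block $G$ of rows and columns $N+1,\dots,N+m$. The determinant of $G$ equals the product of its diagonal entries. Split each row $g_k$ of $G$ as $g_k=g_k'+g_k''$, where $g_k'$ is the projection onto the image $\tilde W$ of $W$ in the coordinates $\phi_{N+1},\dots,\phi_{N+m}$, so $\dim\tilde W\le n$, and $\|g_k''\|\le d_n$ (up to $\varepsilon$). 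Also $\|g_k\|\le\sigma_{N+1}$ and $\|g_k'\|\le\|g_k\|$.

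Expand the Gram determinant $\det(GG^\top)$ via the Cauchy--Binet formula, or equivalently use Hadamard's inequality after choosing an orthonormal basis whose first $n$ vectors span $\tilde W$. The first $n$ columns of $G$ in that basis have squared norms summing to at most $m\sigma_{N+1}^2$. The last $m-n$ columns have squared norms summing to at most $m d_n^2$. Applying AM--GM to the column norms in each group gives the stated bound.

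\textbf{Step 3 (rates).} Insert $a_{jj}\ge\vartheta\sigma_j$ and the monotonicity $\sigma_{j}\ge\sigma_{N+m}$ into Step 2. With $N=m$ and $n=m/2$ this yields $\sigma_{2m}\le\vartheta^{-1}\sqrt2\,\sigma_{m+1}^{1/2}d_{m/2}^{1/2}\le \sqrt{2}\vartheta^{-1} d_{m/2}^{1/2}$, using $\sigma\le1$ from the unit-ball assumption. Substituting $d_n\le\alpha_0e^{-\alpha_1n^{\alpha_2}}$ gives, for index $r\approx2m$,
\[
\sigma_{r}\le \frac{\sqrt{2\alpha_0}}{\vartheta}\exp\bigl(-\tfrac{\alpha_1}{2}(r/4)^{\alpha_2}\bigr).
\]
The constant here is $\tfrac12\alpha_1 4^{-\alpha_2}=2^{-1-2\alpha_2}\alpha_1=C_2$. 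Odd $r$ and the index shift to $V_{1+r}$ are handled by monotonicity of $\sigma_r$.

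\textbf{Main obstacle.} The delicate step is the determinant lemma in Step 2, specifically getting the AM--GM splitting right so that the constants come out exactly as $C_1$ and $C_2$. The remaining work is the integer rounding of $n=m/2$. Since the statement is cited verbatim from \cite[Cor.~3.3]{DeVorePetrovaWojtaszczyk2013}, I would primarily verify that our hypotheses match theirs: compactness, the unit-ball bound (which holds since $\mathcal M\subseteq S$), and the weak greedy inequality. I would then invoke their result, with the sketch above as the justification.
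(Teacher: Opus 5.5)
Your overall route is the paper's. The paper does not reprove this result; it invokes \cite[Cor.~3.3]{DeVorePetrovaWojtaszczyk2013} (DPW). Your Steps~1--2 are a correct reconstruction of the determinant estimate behind that corollary: the lower-triangular Gram--Schmidt coefficient matrix, Hadamard's inequality on the columns after splitting along $\tilde W$, and AM--GM in the two column groups. (The row-tail bound should read $\sum_{l\ge j}a_{il}^2\le\sigma_j^2$ for each fixed $i$, as your words indicate.) The index bookkeeping in Step~3 does not work as written, however, and the treatment of $V_1$ needs care.

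\textbf{Step~3.} With $N=m$ you only obtain $\sigma_{2m}\le\sqrt2\,\vartheta^{-1}d_{\lfloor m/2\rfloor}^{1/2}$ for $m\ge2$. Monotonicity only pushes such a bound to \emph{larger} indices, so it does not cover all $r$. For $V_1=\{0\}$, the best available index for $\sup_\prmtr\dist(u(\prmtr),V_{1+r})$ gives exponent $\lfloor (r+1)/4\rfloor$. This is $<r/4$ for every $r\equiv1,2\pmod 4$, so you miss $C_2$ there. For $r=1,2$ no bound is available at all.

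The fix is to start the block at the first genuine snapshot rather than at $N=m$. After reducing to a zero initial space, take rows $1,\dots,r+1$ and $n=\lfloor (r+1)/2\rfloor\ge r/2$. Using $\sigma_1\le1$, $d_n\le1$ and $2(r+1-n)/(r+1)\ge1$, you get
$\sup_\prmtr\dist(u(\prmtr),V_{1+r})\le\sqrt2\,\vartheta^{-1}d_n^{1/2}\le C_1\exp(-2^{-1-\alpha_2}\alpha_1r^{\alpha_2})$.
This implies the claim, since $2^{-1-\alpha_2}\ge2^{-1-2\alpha_2}$. It uses the weak greedy inequality already at $r=1$, which the statement provides.

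\textbf{Initial space.} Prepending the basis of $V_1$ is not merely an index shift. Those vectors need not lie in $\mathcal M$ and are not weak-greedy choices. So neither $a_{jj}\ge\vartheta\sigma_j$ nor the width estimate need hold for their rows, and you cannot hand the extended sequence to DPW as a black box; your determinant block must avoid them.

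The paper's \Cref{rem:fixed_initial_space_weak_greedy} sidesteps this by applying $P_1^\perp$. Distances to spaces containing $V_1$ are preserved, and widths do not increase. The projected set stays in the unit ball, and the projected sequence is weak greedy from $\{0\}$ with the same $\vartheta$. The cited corollary then applies verbatim, with $r$ counting enrichments.
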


Our formulation here concerns only (sub-)exponential width decay of the form
$d_n(\mathcal M)\leq \alpha_0\exp(-\alpha_1n^{\alpha_2})$ for $\alpha\in(0,1]$.
We refer to \cite{DeVorePetrovaWojtaszczyk2013} for the proof of this theorem.

\begin{remark}[Fixed initial reduced space]
\label[remark]{rem:fixed_initial_space_weak_greedy}
The standard weak greedy theorem is commonly formulated for a sequence starting from the zero space.
The formulation above also covers an arbitrary fixed initial space~$V_1$.
To see this, 
let $P_1^\perp$ be the orthogonal projector onto $V_1^\perp$ and 
set $\tilde{\mathcal M}\coloneqq P_1^\perp\mathcal M$.
For every space $V_r$ containing~$V_1$, we have
\begin{equation*}
\dist(u(\prmtr),V_r)
=
\dist\bigl(P_1^\perp u(\prmtr),P_1^\perp V_r\bigr)
\qquad \text{and} \qquad
d_n(\tilde{\mathcal M})
\leq
d_n(\mathcal M).
\end{equation*}
Hence, the standard theorem can be applied to the projected greedy sequence in~$V_1^\perp$.
This explains the index shift on the left hand side of  \eqref{eq:weak_greedy_exponential_rate}.
\end{remark}

Regarding snapshot selections, 
the theoretically ideal choice of the next snapshot parameter $\prmtr_r$ 
would maximize
$\prmtr\mapsto\dist\bigl(u(\prmtr),V_r\bigr)$ over~$\prmtrSet$.
This corresponds to $\vartheta = 1$ in \ref{item:weak_greedy}, 
which is called the ``strong greedy'' variant, 
and is usually too expensive to carry out. 
An idealized weak greedy alternative is to maximize an error indicator function
$\Delta^{(r)}:\prmtrSet \to [0,\infty)$ over the full parameter set,
which should be much cheaper to evaluate than
$\dist\bigl(u(\cdot),V_r\bigr)$.
In contrast, the implemented procedure from \Cref{alg:greedy_reduced_basis}
performs this indicator maximization only over the finite set~$\Xi_{\mathrm{train}}$.
The following sufficient condition applies to either choice of the maximization set,
although our formulation only concerns $\prmtrSet$ for brevity.

\begin{lemma}[Error equivalence sufficient for weak greedy selection]
\label{lem:error_equivalence_weak_greedy}
For each $r\in \mathbb{N}$, suppose there exists $\Delta^{(r)}:\prmtrSet \to [0,\infty)$,
such that 
\begin{equation}
\label{eq:indicator_projection_error_equivalence}
c \, \Delta^{(r)}(\prmtr)
\, \leq \,
\dist(u(\prmtr), V_r)^2
\, \leq \,
C \, \Delta^{(r)}(\prmtr)
\qquad \text{for all }
\prmtr \in \prmtrSet,
\end{equation} 
where $c,C>0$ are constants independent of $r$ and of $\prmtr$.
Then, the sequence $(\prmtr_r)_{r \in \mathbb{N}}$ selected via 
$\prmtr_r \, \in \, \operatorname*{arg\,max}_{\prmtr \in \prmtrSet} \Delta^{(r)}(\prmtr)$
satisfies the
condition \Cref{thm:weak_greedy_convergence_rates}~\ref{item:weak_greedy} with $\vartheta = \sqrt{c / C} $. 
\end{lemma}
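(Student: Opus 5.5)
The argument is a direct chaining of the two inequalities in \eqref{eq:indicator_projection_error_equivalence}, applied once at the selected parameter $\prmtr_r$ and once at an arbitrary competitor $\prmtr\in\prmtrSet$. We fix $r\in\N$ and let $\prmtr_r\in\operatorname*{arg\,max}_{\prmtr\in\prmtrSet}\Delta^{(r)}(\prmtr)$. The lemma presupposes that this maximizer exists, for instance by continuity of $\Delta^{(r)}$ on the compact set $\prmtrSet$, or by reading the statement for a finite training set. If one wanted to avoid this, a near-maximizer would work equally well at the price of a slightly smaller $\vartheta$; we do not pursue this.

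For arbitrary $\prmtr\in\prmtrSet$ we use the upper equivalence bound at $\prmtr$, then the maximality of $\prmtr_r$, and finally the lower equivalence bound at $\prmtr_r$:
\[
\dist(u(\prmtr),V_r)^2
\,\leq\, C\,\Delta^{(r)}(\prmtr)
\,\leq\, C\,\Delta^{(r)}(\prmtr_r)
\,\leq\, \frac{C}{c}\,\dist(u(\prmtr_r),V_r)^2 .
\]
Taking the supremum over $\prmtr\in\prmtrSet$ and then square roots (all quantities are nonnegative) gives
\[
\dist(u(\prmtr_r),V_r)\,\geq\,\sqrt{c/C}\,\sup_{\prmtr\in\prmtrSet}\dist(u(\prmtr),V_r).
\]
This is exactly condition \ref{item:weak_greedy} of \Cref{thm:weak_greedy_convergence_rates} with $\vartheta=\sqrt{c/C}$.

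It remains to check that this $\vartheta$ is admissible, that is, $\vartheta\in(0,1]$ and independent of $r$. Positivity holds because $c,C>0$, and independence of $r$ holds because $c$ and $C$ are assumed uniform in $r$. For $\vartheta\leq 1$, i.e.\ $c\leq C$, there are two cases. If some $\prmtr$ satisfies $\Delta^{(r)}(\prmtr)>0$, then the two-sided bound at that $\prmtr$ gives $c\,\Delta^{(r)}(\prmtr)\leq C\,\Delta^{(r)}(\prmtr)$, hence $c\leq C$. Otherwise $\Delta^{(r)}\equiv 0$ for every $r$, so every distance vanishes and the weak greedy inequality holds trivially for any $\vartheta\in(0,1]$; in that degenerate case one may replace $\vartheta$ by $\min\{1,\sqrt{c/C}\}$.

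There is no real obstacle here. The only points requiring care are this trivial degenerate case and the existence of the argmax.
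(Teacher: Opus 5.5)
Your proof is correct and is the standard chaining argument the paper has in mind; the paper omits the proof as straightforward. Reliability at an arbitrary $\mu$, maximality of $\Delta^{(r)}$ at $\mu_r$, and efficiency at $\mu_r$ give $\dist(u(\mu),V_r)^2\leq (C/c)\,\dist(u(\mu_r),V_r)^2$, and your remarks on $c\leq C$ (so that $\vartheta\in(0,1]$) and on the existence of the maximizer are harmless refinements that the paper leaves implicit.
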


An error indicator satisfying the left hand side of
\eqref{eq:indicator_projection_error_equivalence} is called \emph{efficient},
whereas one satisfying the right hand side is called \emph{reliable}. 
The proof of \Cref{lem:error_equivalence_weak_greedy}
is straightforward and omitted.

\subsubsection{Compact solution set and valid weak greedy selection rules}

To apply \Cref{thm:weak_greedy_convergence_rates} to our parametric Lindbladian context,
the existence of a solution map with a compact solution set is necessary.
For the remainder of the convergence analysis, we impose the following assumption.

\begin{assumption}[Continuity and uniform well-posedness]
\label[assumption]{ass:continuous_wellposedness}
The following conditions hold:
\begin{enumerate}
\item
The coefficient functions
$\theta_1,\ldots,\theta_\NrHam,\phi_1,\ldots,\phi_\NrDis$
from \Cref{ass:affine_decomp_param_Lindbladian}
are continuous on~$\prmtrSet$. 
\item
$\Lindbladian(\prmtr)$ has a unique steady state for every
$\prmtr\in\prmtrSet$. 
\end{enumerate}
\end{assumption}

For $\prmtr\in\prmtrSet$, define the restricted operator 
\begin{equation}
\Lindbladian_0(\prmtr)
\coloneqq
\Lindbladian(\prmtr)|_{A_0}:A_0\to A_0.
\label{eq:restricted_Lindbladian_A0}
\end{equation}
By \Cref{rem:steady_state_eqn_as_LES} and
\eqref{eq:equivalence_unique_steady_state},
\Cref{ass:continuous_wellposedness} implies 
that $\Lindbladian_0(\prmtr)$ is invertible
and that
\begin{equation}
\SteadyState(\prmtr)
=
I/N
-
\Lindbladian_0(\prmtr)^{-1}\Lindbladian(\prmtr)I / N.
\label{eq:steady_state_solution_map_formula}
\end{equation}

\begin{lemma}[Sufficient condition for a compact solution set]
\label{lem:compact_solution_set}
Under \Cref{ass:continuous_wellposedness},
the solution map $u=\SteadyState:\prmtrSet\to\mathcal H$ is well-defined and continuous,
and the solution set $\mathcal M=\SteadyState(\prmtrSet)$ is compact. 
\end{lemma}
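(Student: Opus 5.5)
The plan is to use the explicit representation \eqref{eq:steady_state_solution_map_formula} together with continuity of matrix inversion on the finite-dimensional space $A_0$. Well-definedness comes first. By \Cref{ass:continuous_wellposedness}(2) and \eqref{eq:equivalence_unique_steady_state}, for each $\prmtr\in\prmtrSet$ the set $S\cap\ker\Lindbladian(\prmtr)$ is a singleton. Hence $\SteadyState(\prmtr)$ is uniquely determined, independently of the solver. By \Cref{rem:steady_state_eqn_as_LES}, it is given by \eqref{eq:steady_state_solution_map_formula}, where $\Lindbladian_0(\prmtr)=\Lindbladian(\prmtr)|_{A_0}$ is invertible, since $\beta(\prmtr)>0$ means injectivity of $\Lindbladian_0(\prmtr)$ on the finite-dimensional space $A_0$.

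Next, continuity of $\prmtr\mapsto\Lindbladian(\prmtr)\in\mathcal B(\mathcal H)$ follows from the affine decomposition \eqref{eq:Lindbladian_affine_decomposition}. The component operators $\mathcal K_p,\mathcal D_q$ are fixed bounded operators, and the coefficients $\theta_p,\phi_q$ are continuous by \Cref{ass:continuous_wellposedness}(1). Consequently $\prmtr\mapsto\Lindbladian_0(\prmtr)\in\mathcal B(A_0)$ is continuous, because restriction to $A_0$ is a bounded linear map and the range lies in $A_0$ by \Cref{lem:Lindbladian_properties}. Likewise $\prmtr\mapsto\Lindbladian(\prmtr)I/N\in A_0$ is continuous.

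The key step is the continuity of $\prmtr\mapsto\Lindbladian_0(\prmtr)^{-1}$. After fixing a basis of $A_0$, inversion is continuous on the open set of invertible operators; this follows, for instance, from Cramer's rule or a Neumann series argument. Composing this with the continuous map $\prmtr\mapsto\Lindbladian_0(\prmtr)$, which takes values in the invertible operators, and then applying it to the continuous vector $\Lindbladian(\prmtr)I/N$, shows that $\SteadyState$ is continuous by \eqref{eq:steady_state_solution_map_formula}.

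Finally, $\mathcal M=\SteadyState(\prmtrSet)$ coincides with the set from \Cref{def:sol_set} because of uniqueness. It is the continuous image of the compact set $\prmtrSet$, and is therefore compact. Moreover, $\mathcal M\subseteq S$ lies in the closed unit ball by \Cref{lem:geometric_properties}, as required for \Cref{thm:weak_greedy_convergence_rates}. The step needing the most care is the continuity of the inverse, though it is standard. A by-product of this step is that $\prmtr\mapsto\beta(\prmtr)=\|\Lindbladian_0(\prmtr)^{-1}\|^{-1}$ is continuous and positive, hence bounded below on $\prmtrSet$.
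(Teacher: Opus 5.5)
Your proof is correct and follows the same route as the paper. You get continuity of $\Lindbladian$ and $\Lindbladian_0$ from the affine decomposition with continuous coefficients, apply continuity of operator inversion to the representation \eqref{eq:steady_state_solution_map_formula}, and obtain compactness of $\mathcal M$ as the continuous image of the compact set $\prmtrSet$. Your extra remarks on well-definedness via uniqueness, invertibility of $\Lindbladian_0(\prmtr)$ from injectivity on the finite-dimensional $A_0$, and containment of $\mathcal M$ in the unit ball spell out steps the paper leaves implicit.
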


\begin{proof}
By \Cref{ass:affine_decomp_param_Lindbladian}
and \Cref{ass:continuous_wellposedness},
both $\Lindbladian:\prmtrSet\to\mathcal B(\mathcal H)$ and
$\Lindbladian_0:\prmtrSet\to\mathcal B(A_0)$ are continuous.
Since inversion is continuous on the set of invertible operators,
\eqref{eq:steady_state_solution_map_formula} implies continuity of
$\SteadyState:\prmtrSet\to\mathcal H$.
Together with compactness of~$\prmtrSet$,
we deduce compactness of $\mathcal M=\SteadyState(\prmtrSet)$.
\end{proof}

The continuous well-posedness assumption
from \Cref{ass:continuous_wellposedness}
is actually sufficient to show that the two indicators
$\Delta^{(r)}_\mathrm{eig}$ and $\Delta^{(r)}_\mathrm{aff}$
are both efficient and reliable.
To see this, we recall that $\prmtrSet$ is assumed to be compact,
and that $\mathcal{A}(\prmtr) = \Lindbladian(\prmtr)^\dagger \Lindbladian(\prmtr) $
as well as $\gamma(\prmtr)$ and $\beta(\prmtr)$ are defined in \Cref{subsec:state-error}.

\begin{proposition}[Efficient and reliable snapshot selection rules]
\label{prp:valid_weak_greedy}
Suppose \Cref{ass:continuous_wellposedness} holds,
and let $V_r\subseteq\mathcal H$ be a reduced space satisfying $I\in V_r$. 
Then, the constants 
\begin{align*}
\varrho_{\max}
&\coloneqq
\max_{\prmtr\in\prmtrSet}
\max_{\|X\|=1}
\langle X,\mathcal A(\prmtr)X\rangle,
&
\gamma_{\min}
&\coloneqq
\min_{\prmtr\in\prmtrSet}\gamma(\prmtr),
\\
\nu_{\min}
&\coloneqq
\min_{\prmtr\in\prmtrSet}\|\SteadyState(\prmtr)\|^2,
&
\beta_{\min}
&\coloneqq
\min_{\prmtr\in\prmtrSet}\beta(\prmtr).
\end{align*}
 are well-defined and satisfy
$0<\varrho_{\max}<\infty$, $\gamma_{\min}>0$,
$\nu_{\min}>0$, and $\beta_{\min}>0$.
Furthermore, the following properties hold true for any $r\in \mathbb{N}$.
\begin{enumerate}
\item 
\label{item:eig_formulation_error_equivalence} 
For $\Delta^{(r)}_\mathrm{eig}$ from \eqref{eq:eigenvalue_error_indicator},
we have 
\begin{equation*}
\nu_{\min}\varrho_{\max}^{-1}
\Delta^{(r)}_\mathrm{eig}(\prmtr)
\, \leq \, 
\dist(\SteadyState(\prmtr), V_r)^2
\, \leq \, 
\gamma_{\min}^{-1}
\Delta^{(r)}_\mathrm{eig}(\prmtr)
\qquad \text{for all }
\prmtr \in \prmtrSet.
\end{equation*}
\item 
\label{item:aff_formulation_error_equivalence} 
For $\Delta^{(r)}_\mathrm{aff}$ from \eqref{eq:affine_error_indicator},
we have  
\begin{equation*}
\varrho_{\max}^{-1}
\Delta^{(r)}_\mathrm{aff}(\prmtr)
\, \leq \, 
\dist(\SteadyState(\prmtr), V_r)^2
\, \leq \, 
\beta_{\min}^{-2}
\Delta^{(r)}_\mathrm{aff}(\prmtr)
\qquad \text{for all }
\prmtr \in \prmtrSet.
\end{equation*}
\end{enumerate}
\end{proposition}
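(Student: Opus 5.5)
We first establish the four constants and then prove the two equivalences, each by bounding the residual above and below.

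\emph{Constants.} Under \Cref{ass:continuous_wellposedness}, $\prmtr\mapsto\mathcal A(\prmtr)$ is continuous on the compact set $\prmtrSet$, so $\varrho_{\max}=\max_\prmtr\|\mathcal A(\prmtr)\|$ is finite. It is positive, since $\Lindbladian(\prmtr)\neq0$ whenever the kernel is one-dimensional and $\fdim\geq2$. By \Cref{lem:compact_solution_set}, $\prmtr\mapsto\|\SteadyState(\prmtr)\|^2$ is continuous, and it is at least $1/\fdim$ by \Cref{lem:geometric_properties}, so $\nu_{\min}\geq1/\fdim>0$.

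For $\beta$ we write $\beta(\prmtr)=\|\Lindbladian_0(\prmtr)^{-1}\|^{-1}$. This is continuous because inversion is continuous, and positive by \eqref{eq:equivalence_unique_steady_state}, so $\beta_{\min}>0$. For $\gamma$, the map $\prmtr\mapsto\gamma(\prmtr)$ is the second smallest eigenvalue of $\mathcal A(\prmtr)$, because the kernel is exactly one-dimensional. Eigenvalues depend continuously on the operator (Weyl's inequality), so this map is continuous and positive, giving $\gamma_{\min}>0$. The coarser bound $\beta\geq\sqrt{\gamma/\fdim}$ from \Cref{lem:beta_gap_angle_relation} is not needed here.

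\emph{Eigenvalue indicator.} We use three facts:
\[
\Delta_{\mathrm{eig}}^{(r)}(\prmtr)=\min_{X\in V_r,\|X\|=1}\|\Lindbladian(\prmtr) X\|^2,\qquad
\ker\Lindbladian(\prmtr)=\sspan\{u\},\quad u\coloneqq\SteadyState(\prmtr)/\|\SteadyState(\prmtr)\|.
\]
\emph{Reliability.} For any unit $X\in V_r$, decompose $X$ as in the proof of \Cref{lem:beta_gap_angle_relation}. This gives $\|\Lindbladian X\|^2\geq\gamma\,\dist(X,\sspan\{u\})^2$. We then need $\dist(u,V_r)\leq\dist(X,\sspan\{u\})$ for unit $X$. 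This is the symmetric-distance fact for unit vectors: both sides equal the sine of the angle between $X$ and the line through $u$, and $\dist(u,V_r)\leq\sin\angle(u,\sspan\{X\})$ because $X\in V_r$. Hence
\[
\dist(\SteadyState,V_r)^2\leq\|\SteadyState\|^2\dist(u,V_r)^2\leq\gamma_{\min}^{-1}\Delta_{\mathrm{eig}}^{(r)},
\]
where the last step uses $\|\SteadyState\|\leq1$.

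\emph{Efficiency.} Let $P\SteadyState$ be the orthogonal projection onto $V_r$. Then $\Lindbladian(P\SteadyState)=-\Lindbladian(\SteadyState-P\SteadyState)$, so $\|\Lindbladian P\SteadyState\|^2\leq\varrho_{\max}\dist(\SteadyState,V_r)^2$. If $P\SteadyState\neq0$, the Rayleigh quotient gives
\[
\Delta_{\mathrm{eig}}^{(r)}\leq\|\Lindbladian P\SteadyState\|^2/\|P\SteadyState\|^2 .
\]
The main obstacle is bounding $\|P\SteadyState\|$ from below uniformly. This is where $I\in V_r$ enters: $\|P\SteadyState\|\geq|\langle\SteadyState,I/\sqrt\fdim\rangle|=1/\sqrt\fdim$. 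That yields only a factor $\fdim$, whereas the claimed constant is $\nu_{\min}$. To obtain $\nu_{\min}$ I would split into two cases. If $\dist(\SteadyState,V_r)^2\geq\nu_{\min}/\varrho_{\max}\cdot\Delta_{\mathrm{eig}}^{(r)}$ fails trivially, there is nothing to show. Otherwise I would use the generic bound $\Delta_{\mathrm{eig}}^{(r)}\leq\varrho_{\max}$, taking $X$ to be any unit vector, together with $\|P\SteadyState\|^2=\|\SteadyState\|^2-\dist^2$. A cleaner route is to take the unit vector $X=I/\sqrt\fdim$ directly; this gives $\Delta\leq\|\Lindbladian I\|^2/\fdim$. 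I would try to combine these two bounds to reach the stated constant, and would accept a constant depending on $\nu_{\min}$ and $\fdim$ if the exact form does not close.

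\emph{Affine indicator.} \emph{Reliability} follows from \Cref{cor:online_approx_steady_state_error}, since $\rho_{\mathrm{aff}}^{(r)}\in V_r$ gives
\[
\dist(\SteadyState,V_r)\leq\|\SteadyState-\rho_{\mathrm{aff}}^{(r)}\|\leq\sqrt{\Delta_{\mathrm{aff}}^{(r)}}/\beta_{\min}.
\]
\emph{Efficiency.} Let $P$ be the orthogonal projector onto $V_r$. Since $I\in V_r$, $P$ preserves trace, because $\tr(P\SteadyState)=\langle P\SteadyState,I\rangle=\langle\SteadyState,I\rangle=1$. So $P\SteadyState$ is feasible for \eqref{eq:abstract_constraint_res_min}, and minimality gives
\[
\Delta_{\mathrm{aff}}^{(r)}\leq\|\Lindbladian(P\SteadyState-\SteadyState)\|^2\leq\varrho_{\max}\dist(\SteadyState,V_r)^2 .
\]
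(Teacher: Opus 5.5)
The constants, the reliability bound for $\Delta_{\mathrm{eig}}^{(r)}$, and both bounds for $\Delta_{\mathrm{aff}}^{(r)}$ are correct and essentially the paper's arguments. The genuine gap is the efficiency half of item (1): you never reach the constant $\nu_{\min}\varrho_{\max}^{-1}$, and the case split you sketch cannot reach it.

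Estimating the numerator through $\Lindbladian P\SteadyState=-\Lindbladian(\SteadyState-P\SteadyState)$ gives
\[
\Delta_{\mathrm{eig}}^{(r)}\leq\frac{\varrho_{\max}\dist(\SteadyState,V_r)^2}{\|P\SteadyState\|^2}
=\frac{\varrho_{\max}\dist(\SteadyState,V_r)^2}{\|\SteadyState\|^2-\dist(\SteadyState,V_r)^2}.
\]
This exceeds the needed $\varrho_{\max}\dist(\SteadyState,V_r)^2/\|\SteadyState\|^2$ by the factor $\|\SteadyState\|^2/\|P\SteadyState\|^2$, which is $>1$ whenever $\dist(\SteadyState,V_r)>0$. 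Consider a parameter with $\|\SteadyState(\prmtr)\|^2=\nu_{\min}$:
\begin{itemize}
\item this bound yields the claim only if $\dist(\SteadyState,V_r)=0$;
\item the distance-independent bound $\Delta_{\mathrm{eig}}^{(r)}\leq\varrho_{\max}$ yields it only if $\dist(\SteadyState,V_r)^2\geq\|\SteadyState\|^2$, i.e.\ $P\SteadyState=0$, which $I\in V_r$ excludes;
\item the bound from $X=I/\sqrt N$ involves $\|\Lindbladian I\|$ and carries no factor $\dist(\SteadyState,V_r)^2$ at all.
\end{itemize}
So no combination closes. What your argument actually proves is the weaker constant $(N\varrho_{\max})^{-1}$, via $\|P\SteadyState\|^2\geq 1/N$. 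That still gives a weak greedy sequence, but it is not the stated inequality.

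The missing idea is to measure $P\SteadyState$ against the kernel rather than against $V_r$. Let $u=\SteadyState/\|\SteadyState\|$ and let $P_u^\perp$ be the orthogonal projector onto $\sspan\{u\}^\perp$. Since $\Lindbladian u=0$, you get $\|\Lindbladian P\SteadyState\|^2=\|\Lindbladian P_u^\perp P\SteadyState\|^2\leq\varrho_{\max}\|P_u^\perp P\SteadyState\|^2$. Using $\langle P\SteadyState,u\rangle=\|P\SteadyState\|^2/\|\SteadyState\|$,
\[
\|P_u^\perp P\SteadyState\|^2=\|P\SteadyState\|^2-\frac{\|P\SteadyState\|^4}{\|\SteadyState\|^2}
=\|P\SteadyState\|^2\,\frac{\dist(\SteadyState,V_r)^2}{\|\SteadyState\|^2}.
\]
Dividing by $\|P\SteadyState\|^2$ gives $\Delta_{\mathrm{eig}}^{(r)}\leq\varrho_{\max}\|\SteadyState\|^{-2}\dist(\SteadyState,V_r)^2\leq\varrho_{\max}\nu_{\min}^{-1}\dist(\SteadyState,V_r)^2$. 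This is your symmetric-distance fact from the reliability step, now used with equality for $X=P\SteadyState/\|P\SteadyState\|$, since then $\dist(X,\sspan\{u\})=\dist(u,V_r)$. That is how the paper closes the argument.
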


\begin{proof}
The claims on $\varrho_{\max}$ and $\gamma_{\min}$ follow from the
variational characterization and continuity of the ordered eigenvalues.
Continuity of the steady-state map from \Cref{lem:compact_solution_set} and
compactness of $\prmtrSet$ show that $\nu_{\min}$ is attained; moreover,
$\nu_{\min}\geq1/N$ by \Cref{lem:geometric_properties}.  
Finally,
$\Lindbladian_0(\prmtr)$ is continuously invertible 
under \Cref{ass:continuous_wellposedness} and
$\beta(\prmtr)
=
\|\Lindbladian_0(\prmtr)^{-1}\|_{\mathrm{op}}^{-1}$
with $\|\cdot\|_{\mathrm{op}}$ denoting the operator norm. 
Thus, $\beta$ is continuous and strictly positive, 
and compactness of $\prmtrSet$ yields
$\beta_{\min}>0$.

Let us choose arbitrary $r\in \mathbb{N}$ and $\prmtr \in \prmtrSet$. 
For brevity, we suppress the dependence on $\prmtr$ throughout the remainder of the proof and simply denote
\[
\mathcal L\coloneqq\Lindbladian(\prmtr),
\qquad
\mathcal A\coloneqq\mathcal A(\prmtr),
\qquad
\rho\coloneqq\SteadyState(\prmtr),
\qquad
u\coloneqq {\rho}/{\|\rho\|}.
\]
We also write
$\gamma, \rho_{\mathrm n}^{(r)}$, $\rho_{\mathrm{aff}}^{(r)}$,
$\Delta_{\mathrm{eig}}^{(r)}$ and $\Delta_{\mathrm{aff}}^{(r)}$
for the corresponding quantities at the chosen~$\prmtr$.

Let $P\in\mathcal B(\mathcal H)$ and $P^\perp\in\mathcal B(\mathcal H)$
denote the orthogonal projectors onto $V_r$ and $V_r^\perp$, respectively.
In particular, $\dist(\rho,V_r)=\|P^\perp\rho\|$.
For any $X\in\mathcal H$, let $P_X$ denote the orthogonal projector onto
$\sspan_\mathbb R\{X\}$.
Observe that 
\begin{equation}
\|P_u X\|
\, = \,
\|P_{X} u\|
\, \leq \, 
\|P u\|
\qquad \text{for }
X \in V_r
\text{ with }\|X\|=1,
\label{eq:norm_interchange}
\end{equation}
with equality when $P u \in \sspan\{X\}$, 
i.e., when $Pu\neq 0$ and $X=\pm Pu/\|Pu\|$ 
due to the norm constraint for $X$. 
We also have 
$\ker\mathcal L=\sspan_{\mathbb R}\{ \rho \}$ and  
\begin{equation}
\dist(X,\ker\mathcal L)^2 
=
\|P_u^\perp X\|^2 
=
1-\|P_u X\|^2
\qquad \text{for any }\|X\|=1.
\label{eq:normalized_projection_error_relation}
\end{equation}
Note that $\mathcal{A}P_{ u } = \mathcal{L}^\dagger \mathcal{L}P_{u }= 0$.
Hence, 
we use $X = P_{u } X + P_{u }^\perp X$ and the definition of $\gamma_{\min}$ to deduce 
\begin{equation}
\label{eq:gap_ker_dist_relation}
    \langle X, \mathcal A X\rangle
=
\langle P_{ u }^\perp X, \mathcal A P_{ u }^\perp X\rangle
\geq
\gamma_{\min} \|P_{ u }^\perp X\|^2
=
\gamma_{\min}\,
(1-\|P_u X\|^2)
\qquad \text{for any }\|X\|=1.
\end{equation}

\noindent $\bullet$ 
\emph{We now prove \ref{item:eig_formulation_error_equivalence}}. 
Consider $\rho^{(r)}_{\mathrm{n}}$ from \eqref{eq:norm_normalized_reduced_eigenvector}.
Note that 
$\Delta^{(r)}_\mathrm{eig} = \langle \rho^{(r)}_{\mathrm{n}}, \mathcal{A}\rho^{(r)}_{\mathrm{n}} \rangle$
and 
$\|\rho^{(r)}_{\mathrm{n}}\|=1$. 
For every $X\in V_r$ with $\|X\|=1$, 
\eqref{eq:gap_ker_dist_relation} 
and \eqref{eq:norm_interchange} yield
\[
\begin{aligned}
\langle X,\mathcal A X\rangle
\geq
\gamma_{\min}\big(1-\|P_u X\|^2\big)
\geq
\gamma_{\min}\big(1-\|P u\|^2\big)
=
\gamma_{\min}\dist(u,V_r)^2.
\end{aligned}
\]
Applying this estimate to $\rho^{(r)}_{\mathrm n}$,
and using $\|\rho\|\leq1$ from
\Cref{lem:geometric_properties}, we obtain 
\[
\dist(\rho,V_r)^2
\leq
\dist(u,V_r)^2
\leq
\gamma_{\min}^{-1}
\Delta^{(r)}_{\mathrm{eig}}.
\]

To prove the first inequality,
note that $I\in V_r$ and
$\langle\rho,I\rangle=\tr\rho=1$.
Hence, $P\rho\neq0$,
and 
$ X_\rho \coloneqq P\rho/\|P\rho\| \in V_r$ is admissible in \eqref{eq:abstract_EVP}.
Applying \eqref{eq:norm_interchange} and
\eqref{eq:normalized_projection_error_relation},
we deduce
$$
\|P_u^\perp X_\rho\|^2
= 1-\|P_u X_\rho\|^2 = 1-\|P_{X_\rho }u\|^2
= 1-\|P u\|^2
= \dist(u,V_r)^2
= \|\rho\|^{-2} {\dist(\rho,V_r)^2}.
$$
Using 
the minimizing property of
$\rho_{\mathrm n}^{(r)}$, 
$ \Lindbladian = \Lindbladian P_u^\perp $, 
 the definitions of
$\varrho_{\max}$ and $\nu_{\min}$,
and the previous identity,
we obtain
\[
\Delta_{\mathrm{eig}}^{(r)}
\leq
\|\Lindbladian X_\rho\|^2
= \|\Lindbladian P_u^\perp X_\rho\|^2
\leq
\varrho_{\max}
\|P_u^\perp X_\rho\|^2
=
{\varrho_{\max}}{\|\rho\|^{-2}}\dist(\rho,V_r)^2
\leq
{\varrho_{\max}}\,{\nu_{\min}^{-1}}\,\dist(\rho,V_r)^2.
\]

\smallskip
\noindent $\bullet$ 
\emph{We now prove \ref{item:aff_formulation_error_equivalence}}. 
Consider $\rho^{(r)}_{\mathrm{aff}}$ from \eqref{eq:reduced_affine_approximation}.
Note that 
$\Delta^{(r)}_\mathrm{aff} = \|\mathcal L\rho^{(r)}_{\mathrm{aff}} \|^2$
and 
$\tr \rho^{(r)}_{\mathrm{aff}} =1$. 
As observed above, $P \rho\in V_r\cap A_1$ is admissible in
\eqref{eq:reduced_affine_residual_problem}. 
Hence, the minimizing property of $\varrho^{(r)}_{\mathrm{aff}}$,
$ \Lindbladian\rho=0$, and the definition of $\rho_{\max}$ imply
\[
\begin{aligned}
\Delta^{(r)}_{\mathrm{aff}}
\leq
\|\mathcal L P \rho\|^2
=
\|\mathcal L(P \rho-\rho)\|^2
\leq
\varrho_{\max}\|P^\perp\rho\|^2
=
\varrho_{\max}\dist(\rho,V_r)^2.
\end{aligned}
\]
This proves the first inequality in
\ref{item:aff_formulation_error_equivalence}. 

Finally, set
$E^{(r)}_{\mathrm{aff}}
\coloneqq
\rho^{(r)}_{\mathrm{aff}}-\rho$.
Since both matrices have unit trace, $E^{(r)}_{\mathrm{aff}}\in A_0$.
By the definitions of $\beta(\prmtr)$ and $\beta_{\min}$,
it holds that  $\Delta^{(r)}_{\mathrm{aff}}
=
\|\mathcal L E^{(r)}_{\mathrm{aff}}\|^2
\geq
\beta_{\min}^2\|E^{(r)}_{\mathrm{aff}}\|^2.$
Since $\rho^{(r)}_{\mathrm{aff}}\in V_r$, 
we have 
$\dist(\rho,V_r)^2
\leq \|\rho^{(r)}_{\mathrm{aff}}-\rho\|^2 
= 
\| E^{(r)}_{\mathrm{aff}} \|^2$.
Combining these two estimates 
yields the desired inequality.
\end{proof}

\subsection{Analytic regularity, width decay and convergence of the greedy algorithm}
\label{subsec:width-convergence}

The preceding subsection verified the weak greedy selection condition
from \Cref{thm:weak_greedy_convergence_rates}~\ref{item:weak_greedy} for both reduced formulations.
It remains to establish the decay of the Kolmogorov widths required in
\ref{item:rate_decay}.

To this end, we shall impose additional assumptions on the parameter dependence and on the geometry of the parameter domain.
We first establish analytic regularity of the steady-state map,
then derive sub-exponential width decay by tensorized Chebyshev interpolation,
and finally combine these results with the weak greedy framework.
We conclude by relating the resulting idealized full-domain estimate
to the finite training set used in \Cref{alg:greedy_reduced_basis}.

\begin{assumption}[Real-analytic parameter dependence]
\label[assumption]{ass:real_analytic_parameter_dependence}
There exists an open neighborhood $\Omega\subseteq\R^\pDim$ of $\prmtrSet$,
such that each coefficient function
$\theta_1,\ldots,\theta_\NrHam,\phi_1,\ldots,\phi_\NrDis$
from \Cref{ass:affine_decomp_param_Lindbladian}
admits a real-analytic extension to~$\Omega$.
\end{assumption}

\Cref{ass:real_analytic_parameter_dependence} strengthens the continuity requirement
from \Cref{ass:continuous_wellposedness}.
The extended functions $\phi_\IdxDis$ are not required to remain nonnegative outside~$\prmtrSet$,
as their extensions are only used to establish regularity of the solution map
in a neighborhood of the parameter set.

\begin{proposition}[Analyticity of the steady-state map]
\label[proposition]{prop:analyticity_steady_state_map}
Suppose \Cref{ass:continuous_wellposedness}
and \Cref{ass:real_analytic_parameter_dependence} hold.
Then the steady-state map
$\SteadyState:\prmtrSet\to\mathcal H$ is real-analytic,
in the sense that it admits an $\mathcal H$-valued real-analytic extension
to an open neighborhood $\Omega_0\subseteq\Omega$ of~$\prmtrSet$.
\end{proposition}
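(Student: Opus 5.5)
The plan is to use the explicit solution formula \eqref{eq:steady_state_solution_map_formula} and to extend each ingredient analytically to a neighborhood of $\prmtrSet$.

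\emph{Step 1: extend the operator.} By \Cref{ass:real_analytic_parameter_dependence} and the affine decomposition \eqref{eq:Lindbladian_affine_decomposition}, the map
\[
\Lindbladian:\Omega\to\mathcal B(\mathcal H),\qquad \prmtr\mapsto\sum_p\theta_p(\prmtr)\mathcal K_p+\sum_q\phi_q(\prmtr)\mathcal D_q,
\]
is real-analytic, because it is a finite linear combination of fixed operators with real-analytic scalar coefficients. The operators $\mathcal K_p$ and $\mathcal D_q$ map $\mathcal H$ into $A_0$. This holds for any real coefficients, since trace-annihilation and Hermiticity preservation are properties of each component and do not need $\phi_q\ge0$. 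Hence the restriction $\Lindbladian_0(\prmtr)=\Lindbladian(\prmtr)|_{A_0}\in\mathcal B(A_0)$ is also real-analytic on $\Omega$.

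\emph{Step 2: invertibility on a neighborhood.} By \Cref{ass:continuous_wellposedness} and \eqref{eq:equivalence_unique_steady_state}, $\Lindbladian_0(\prmtr)$ is invertible for every $\prmtr\in\prmtrSet$. The set of invertible operators in the finite-dimensional space $\mathcal B(A_0)$ is open, and $\Lindbladian_0$ is continuous on $\Omega$. Therefore $\Omega_0\coloneqq\{\prmtr\in\Omega:\Lindbladian_0(\prmtr)\text{ invertible}\}$ is an open set containing $\prmtrSet$. Equivalently, $\det\Lindbladian_0(\prmtr)$, computed in a fixed basis of $A_0$, is a real-analytic scalar function that is nonzero on $\prmtrSet$. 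Compactness is not even needed for this step, since openness suffices.

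\emph{Step 3: analyticity of the inverse and conclusion.} On $\Omega_0$, Cramer's rule gives $\Lindbladian_0(\prmtr)^{-1}=\operatorname{adj}(\Lindbladian_0(\prmtr))/\det\Lindbladian_0(\prmtr)$. The adjugate has polynomial entries in the analytic matrix entries, and the determinant is a nonvanishing analytic denominator, so the inverse is real-analytic. Alternatively, one can use the Neumann series $\Lindbladian_0(\prmtr)^{-1}=\sum_k\bigl(\Lindbladian_0(\prmtr_0)^{-1}(\Lindbladian_0(\prmtr_0)-\Lindbladian_0(\prmtr))\bigr)^k\Lindbladian_0(\prmtr_0)^{-1}$ locally around each $\prmtr_0$.

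Then define
\[
\rho(\prmtr)\coloneqq I/N-\Lindbladian_0(\prmtr)^{-1}\Lindbladian(\prmtr)I/N\qquad\text{on }\Omega_0.
\]
This is a composition of real-analytic maps with the bounded bilinear evaluation $(T,X)\mapsto TX$, so it is $\mathcal H$-valued and real-analytic. By \eqref{eq:steady_state_solution_map_formula} it coincides with $\SteadyState$ on $\prmtrSet$, which proves the claim.

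The only delicate point, and it is minor, is in Step 1. One must check that the extended operators still map into $A_0$, so that $\Lindbladian_0$ is well-defined as an endomorphism of $A_0$ off $\prmtrSet$, where the $\phi_q$ may become negative. This holds because the trace-free and Hermiticity properties are linear and hold for each $\mathcal K_p$ and $\mathcal D_q$ individually. The extension need not yield steady states, or positive semidefinite matrices, outside $\prmtrSet$, and this is not required.
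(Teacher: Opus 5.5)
Your proof is correct and follows the paper's argument essentially step for step: extend $\Lindbladian$ analytically to $\Omega$ via the affine decomposition, take $\Omega_0$ as the open set where $\Lindbladian_0$ is invertible, and use analyticity of inversion in \eqref{eq:steady_state_solution_map_formula}. You also check explicitly that each $\mathcal K_p$ and $\mathcal D_q$ maps $\mathcal H$ into $A_0$ whatever the sign of the coefficients. The paper leaves this implicit when it says the restriction formula extends to $\Omega$.
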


\begin{proof}
By \Cref{ass:real_analytic_parameter_dependence} and the affine decomposition
\eqref{eq:Lindbladian_affine_decomposition},
$\Lindbladian$ admits a real-analytic extension to~$\Omega$,
which we denote again by~$\Lindbladian$.
The definition of $\Lindbladian_0$ from \eqref{eq:restricted_Lindbladian_A0}
extends to~$\Omega$ by the same restriction formula,
and the resulting operator-valued map is real-analytic.
By \Cref{ass:continuous_wellposedness} and \eqref{eq:equivalence_unique_steady_state},
$\Lindbladian_0(\prmtr)$ is invertible for every $\prmtr\in\prmtrSet$.
Since the set of invertible operators on~$A_0$ is open,
\(
\Omega_0
\coloneqq
\{
\prmtr\in\Omega:
\Lindbladian_0(\prmtr)\text{ is invertible}
\}
\)
is an open neighborhood of~$\prmtrSet$.

The inversion map is real-analytic on the set of invertible operators.
Hence, the right hand side of \eqref{eq:steady_state_solution_map_formula}
defines an $\mathcal H$-valued real-analytic map on~$\Omega_0$.
On~$\prmtrSet$, it coincides with the steady-state map,
and therefore, provides the asserted extension.
\end{proof}

We next impose an additional geometric structure on the parameter domain.
This allows us to turn the analytic regularity from
\Cref{prop:analyticity_steady_state_map} into a quantitative approximation result.

\begin{proposition}[Exponential or sub-exponential width decay]
\label[proposition]{prop:analytic_width_decay}
Suppose the assumptions of \Cref{prop:analyticity_steady_state_map} hold,
and suppose additionally that
\begin{equation*}
\prmtrSet
=
[a_1,b_1]\times \cdots \times[a_\pDim,b_\pDim]
\qquad\text{with}\qquad
a_j<b_j
\quad\text{for }j=1,\ldots,\pDim.
\end{equation*}
Then there exist constants $\alpha_0,\alpha_1>0$ such that
\begin{equation}
d_n(\mathcal M)
\,\leq\,
\alpha_0\exp\Bigl(-\alpha_1 n^{1/\pDim}\Bigr)
\qquad\text{for all }n\in\N.
\label{eq:generalized_exponential_width_decay}
\end{equation}
\end{proposition}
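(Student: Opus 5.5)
The plan is to bound the Kolmogorov width by the error of a tensorized polynomial interpolant. The span of the coefficient matrices of any $\mathcal H$-valued polynomial of coordinate degree $\le m$ has dimension at most $(m+1)^\pDim$. Hence, if $\Pi_m\SteadyState$ denotes a polynomial approximation of $\SteadyState$ on $\prmtrSet$, then
\[
d_{(m+1)^\pDim}(\mathcal M)\le\sup_{\prmtr\in\prmtrSet}\|\SteadyState(\prmtr)-\Pi_m\SteadyState(\prmtr)\|.
\]
It therefore suffices to show a bound of the form $\sup_{\prmtrSet}\|\SteadyState-\Pi_m\SteadyState\|\le C\varrho^{-m}$ for some $\varrho>1$.

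First, I will reduce to the reference cube $[-1,1]^\pDim$ by the affine map in each coordinate. This map preserves real-analyticity and changes only constants.

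Second, I will complexify. By \Cref{prop:analyticity_steady_state_map}, $\SteadyState$ has a real-analytic extension to an open neighborhood $\Omega_0\supseteq\prmtrSet$. Since $\prmtrSet$ is compact, the local power series of this extension converge on complex polydiscs around each point. A standard compactness argument then gives a holomorphic extension to a complex neighborhood $U\subseteq\C^\pDim$ of $\prmtrSet$. Concretely, the formula \eqref{eq:steady_state_solution_map_formula} with complexified $\theta_p,\phi_q$ and $\Lindbladian_0(z)$ acting on $A_0\otimes\C$ stays invertible near $\prmtrSet$, because invertibility is an open condition. Again by compactness, $U$ contains a polyellipse $\mathcal E_\varrho^\pDim$ (a product of Bernstein ellipses with foci at the endpoints of the intervals) for some $\varrho>1$. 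On the closure of this polyellipse, $\|\SteadyState\|$, extended as $\mathcal H\otimes\C$-valued, is bounded by some $K$.

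Third, I will apply the classical Chebyshev estimate for analytic functions, componentwise in an orthonormal basis of $\mathcal H$ (or directly in Banach-space form). The Chebyshev coefficients satisfy
\[
\|c_{k}\|\le 2^{\pDim} K\varrho^{-|k|_1},
\]
so truncating the tensor Chebyshev expansion to $\max_j k_j\le m$ gives an error of order $C_\pDim K\varrho^{-m}(1-\varrho^{-1})^{-\pDim}$. Tensorized Chebyshev interpolation at the Chebyshev points works equally well, with an additional Lebesgue constant factor $(\log m)^\pDim$ that is absorbed by slightly reducing $\varrho$. Taking real parts, or noting that the real-coefficient truncation of a Hermitian-valued map stays in $\mathcal H$, keeps the approximating space real.

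Finally, I will convert the bound to all $n$. Given $n\ge1$, choose $m=\lfloor n^{1/\pDim}\rfloor-1$, so that $(m+1)^\pDim\le n$ and $d_n\le d_{(m+1)^\pDim}$. This yields
\[
d_n\le C\varrho\,\varrho^{-n^{1/\pDim}}=\alpha_0\exp(-\alpha_1n^{1/\pDim})
\]
with $\alpha_1=\log\varrho$. For small $n$, where $m<0$, the trivial bound $d_n\le\sup\|\rho\|\le1$ (\Cref{lem:geometric_properties}) is covered by enlarging $\alpha_0$.

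The main obstacle is the second step: rigorously extracting a uniform complex polyellipse of analyticity from mere real-analyticity on a neighborhood. I would handle it via the explicit formula and continuity of $z\mapsto\Lindbladian_0(z)^{-1}$ on complexifications of the coefficient functions, using compactness to obtain a uniform radius.
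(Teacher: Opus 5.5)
Your proposal is correct and follows essentially the paper's route: affine reduction to $[-1,1]^{\pDim}$, holomorphic extension to a Bernstein polyellipse, and a tensorized Chebyshev approximation with error $O(\varrho^{-m})$ whose values lie in a space of dimension at most $(m+1)^{\pDim}$, followed by the choice $m=\lfloor n^{1/\pDim}\rfloor-1$. The paper uses Chebyshev--Lobatto interpolation, so its space is the span of the snapshots $F(y_\nu)$, whereas your main variant uses the span of the Chebyshev coefficients, which serves equally well for the width; one small slip is that $\lfloor n^{1/\pDim}\rfloor\geq n^{1/\pDim}-1$ only gives $\varrho^{-m}\leq\varrho^{2}\varrho^{-n^{1/\pDim}}$, so your constant should be $C\varrho^{2}$ rather than $C\varrho$, which is immaterial.
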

In case $\pDim=1$, this claim can be found in \cite[Sec.~3.4.1]{SirK16}.

\begin{proof}
Let $T:\mathbb{R}^{\pDim}\to\mathbb{R}^{\pDim}$ be an affine bijection
satisfying $T([-1,1]^{\pDim})=\prmtrSet$,
and set $F\coloneqq\SteadyState\circ T$.
By \Cref{prop:analyticity_steady_state_map},
$F$ is real-analytic on an open neighborhood of the compact cube
$[-1,1]^{\pDim}$.
By the standard complexification result 
for finite-dimensional real-analytic maps, 
cf.~\cite[Sec.~2.2]{KrantzParks2002}, 
$F$ admits a holomorphic extension 
to a complex neighborhood of the cube, 
with values in 
$\mathcal H \otimes \mathbb{C} = \mathbb{C}^{\fdim \times \fdim}$.
Hence, after possibly reducing this neighborhood,
there exists $R>1$, 
such that this extension is bounded 
and holomorphic on the product Bernstein polyellipse $\mathcal E_R^{\pDim} \subseteq \mathbb{C}^{\pDim}$,
where $\mathcal E_R\subseteq\C$ denotes the Bernstein ellipse with parameter~$R$,
cf.~\cite[Sec.~2.2]{KrantzParks2002}.

For every integer $m\geq0$, 
let $\mathcal I_mF$ be the tensorized Chebyshev-Lobatto interpolant
of degree~$m$ in each parameter direction.
The standard interpolation estimate for functions that are holomorphic on a
Bernstein polyellipse gives
\begin{equation}
\sup_{y\in[-1,1]^{\pDim}}
\|
F(y)-\mathcal I_mF(y)
\|
\,\leq\,
C_0R^{-m}
\label{eq:vector_chebyshev_interpolation_error}
\end{equation}
for some constant $C_0>0$ independent of~$m$;
see, e.g., \cite{NobileTamelliniTempone2016,ChkifaCohenSchwab2014,AdcockBrugiapagliaDexterMoraga2024}
for polynomial approximation and interpolation of Hilbert- or Banach-space-valued maps.

Let $(y_\nu)_{\nu\in\{0,\ldots,m\}^{\pDim}}$ be the tensorized Chebyshev nodes.
The Lagrange representation of the interpolant
with respect to a Lagrangian basis 
$\{\ell_\nu\}_{\nu\in\{0,\ldots,m\}^\pDim}$ 
reads
\begin{equation*}
\mathcal I_mF(y)
\, = \!
\sum_{\nu\in\{0,\ldots,m\}^{\pDim}}
\ell_\nu(y)F(y_\nu).
\end{equation*}
Consequently, $\mathcal I_mF(y)$ belongs to the snapshot space
$\sspan_\R
\bigl\{
F(y_\nu):
\nu\in\{0,\ldots,m\}^{\pDim}
\bigr\},$
which has dimension at most $(m+1)^{\pDim}$.
Together with \eqref{eq:vector_chebyshev_interpolation_error}, this yields
\begin{equation}
d_{(m+1)^{\pDim}}(\mathcal M)
\,\leq\,
C_0R^{-m}.
\label{eq:width_at_tensor_degrees}
\end{equation}

For arbitrary $n\in\N$, choose
$m_n\coloneqq\lfloor n^{1/\pDim}\rfloor-1$.
Then $m_n\geq0$ and $(m_n+1)^{\pDim}\leq n$.
By monotonicity of the Kolmogorov widths,
\eqref{eq:width_at_tensor_degrees}
and using
$\lfloor n^{1/\pDim}\rfloor\geq n^{1/\pDim}-1$,
we obtain
\[
d_n(\mathcal M)
\,\leq\,
C_0R^{-m_n}
=
C_0R^{1-\lfloor n^{1/\pDim}\rfloor}
\,\leq\,
C_0R^2\exp\Bigl(-(\log R)n^{1/\pDim}\Bigr).
\]
This proves \eqref{eq:generalized_exponential_width_decay}
with $\alpha_0\coloneqq C_0R^2$ and $\alpha_1\coloneqq\log R$.
\end{proof}

\begin{remark}[Analytically parametrized parameter domains]
\label[remark]{rem:analytic_parametrization_width_decay}
The hyperrectangle assumption can be replaced by an analytic parametrization.
More precisely, suppose that $Q\subseteq\R^{\ell}$ is a compact hyperrectangle
and that $\Phi:Q\to\prmtrSet$ is surjective.
If $\SteadyState\circ\Phi$ admits a real-analytic extension to a neighborhood of~$Q$,
then the proof of \Cref{prop:analytic_width_decay} applies to this composition
and gives constants $\alpha_0,\alpha_1>0$, 
such that
$d_n(\mathcal M)
\,\leq\,
\alpha_0\exp\bigl(-\alpha_1 n^{1/\ell}\bigr).$
\end{remark}

With \Cref{prop:analytic_width_decay}, we are in a position to conclude the convergence analysis 
for the idealized greedy procedures, 
in which the error indicator is maximized over the full parameter set.

\begin{corollary}[Convergence of the idealized greedy procedures]
\label[corollary]{cor:idealized_greedy_convergence}
Suppose the assumptions of \Cref{prop:analytic_width_decay} hold.
For $\star\in\{\mathrm{eig},\mathrm{aff}\}$,
let $V_{1,\star}\coloneqq\sspan_\R\{I\}$ and suppose the sequence of reduced spaces is generated by
\begin{equation*}
\prmtr_{r,\star}
\,\in\,
\operatorname*{arg\,max}_{\prmtr\in\prmtrSet}
\Delta_\star^{(r)}(\prmtr),
\qquad
V_{r+1,\star}
=
V_{r,\star}
+
\sspan_\R\{\SteadyState(\prmtr_{r,\star})\}
.
\end{equation*}
Then there exist $C_1,C_2>0$, 
such that
\begin{equation*}
\sup_{\prmtr\in\prmtrSet}
\dist\bigl(\SteadyState(\prmtr),V_{1+r,\star}\bigr)
\,\leq\,
C_1\exp\Bigl(-C_2 \, r^{1/\pDim}\Bigr)
\qquad\text{for all }r\in\N.
\end{equation*}
\end{corollary}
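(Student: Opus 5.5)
The plan is to verify the two hypotheses of \Cref{thm:weak_greedy_convergence_rates} for the solution map $u=\SteadyState$ and then read off the rate. The preliminaries are immediate. By \Cref{lem:compact_solution_set}, $u$ is well-defined and continuous, and $\mathcal M=\SteadyState(\prmtrSet)$ is compact. By \Cref{lem:geometric_properties}, $\|\rho\|\leq1$ for all $\rho\in S$, so $\mathcal M$ lies in the closed unit ball. The spaces are nested by construction, with $V_{r+1,\star}=V_{r,\star}+\sspan_\R\{\SteadyState(\prmtr_{r,\star})\}$. Since the sequence starts from $V_{1,\star}=\sspan_\R\{I\}$ rather than from $\{0\}$, \Cref{rem:fixed_initial_space_weak_greedy} applies: we pass to $P_1^\perp\mathcal M$, whose widths are bounded by those of $\mathcal M$, and this accounts for the index shift $1+r$.

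For the weak greedy condition, every $V_{r,\star}$ contains $I$, so \Cref{prp:valid_weak_greedy} applies for every $r$. In the eigenvalue case it gives \eqref{eq:indicator_projection_error_equivalence} with $c=\nu_{\min}\varrho_{\max}^{-1}$ and $C=\gamma_{\min}^{-1}$. In the affine case it gives $c=\varrho_{\max}^{-1}$ and $C=\beta_{\min}^{-2}$. All of these constants are independent of $r$ and of $\prmtr$. \Cref{lem:error_equivalence_weak_greedy} then yields condition \ref{item:weak_greedy} with $\vartheta_\star=\sqrt{c/C}\in(0,1]$; the inequality $c\leq C$ follows from the two-sided bound itself whenever the distance is nonzero.

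Two small points need checking here. The first is that the argmax over $\prmtrSet$ actually exists. $\Delta_{\mathrm{eig}}^{(r)}=\lambda_{\min}(A_r(\prmtr))$ is continuous. $\Delta_{\mathrm{aff}}^{(r)}$ is continuous because the saddle-point system \eqref{eq:reduced_affine_KKT_system} is nonsingular for every $\prmtr$ under \Cref{ass:continuous_wellposedness}, so its solution depends continuously on $\prmtr$. Compactness of $\prmtrSet$ then gives existence; in any case the statement already assumes the argmax is attained. The second is the degenerate case $\sup_\prmtr\dist(u(\prmtr),V_r)=0$. There the claimed bound holds trivially from that step on, so we may assume the greedy process does not terminate.

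For the width decay, \Cref{prop:analytic_width_decay} gives $d_n(\mathcal M)\leq\alpha_0\exp(-\alpha_1 n^{1/\pDim})$, which is condition \ref{item:rate_decay} with $\alpha_2=1/\pDim$. Applying \Cref{thm:weak_greedy_convergence_rates} gives the claim with $C_1=\sqrt{2\alpha_0}/\vartheta_\star$ and $C_2=2^{-1-2/\pDim}\alpha_1$.

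The only step needing genuine care is the reduction for the nonzero initial space. I would verify that the greedy choice for $\mathcal M$ is also a weak greedy choice for the projected set $P_1^\perp\mathcal M$ with the same $\vartheta$. This holds because the distances coincide, $\dist(u,V_r)=\dist(P_1^\perp u,P_1^\perp V_r)$. I would also check that the projected set still lies in the unit ball, which holds since $P_1^\perp$ is a contraction.
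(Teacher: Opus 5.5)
Your proposal is correct and matches the paper's proof. You combine \Cref{prp:valid_weak_greedy} with \Cref{lem:error_equivalence_weak_greedy} to obtain the weakness parameters $\sqrt{\nu_{\min}\gamma_{\min}/\varrho_{\max}}$ and $\beta_{\min}/\sqrt{\varrho_{\max}}$, insert the width bound of \Cref{prop:analytic_width_decay} with $\alpha_2=1/\pDim$ into \Cref{thm:weak_greedy_convergence_rates}, and treat the initial space $\sspan_\R\{I\}$ via \Cref{rem:fixed_initial_space_weak_greedy}. Your extra checks (attainment of the argmax, unit-ball containment, the degenerate zero-distance case, and weak greedy selection for the projected set) are left implicit in the paper, and they are all valid.
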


\begin{proof}
By \Cref{prp:valid_weak_greedy} and \Cref{lem:error_equivalence_weak_greedy},
the two indicators generate weak greedy sequences with respective weakness
parameters
$\vartheta_{\mathrm{eig}}
 \coloneqq 
\sqrt{{\nu_{\min}\gamma_{\min}}/{\varrho_{\max}}}$
and 
$\vartheta_{\mathrm{aff}}
\coloneqq 
{\beta_{\min}}/{\sqrt{\varrho_{\max}}}$,
respectively.
By \Cref{prop:analytic_width_decay},
$d_n(\mathcal M)$ satisfies
\Cref{thm:weak_greedy_convergence_rates}~\ref{item:rate_decay}
with $\alpha_2=1/\pDim$,
and the assertion follows from \Cref{thm:weak_greedy_convergence_rates}.
\end{proof}

Finally, let us discuss the finite training set used in the actual implementation.
The influence of finite and randomly sampled training sets on reduced basis greedy procedures
is considered in  \cite{HesthavenStammZhang2014}.

To quantify the discretization error introduced by a nonempty finite training set $\Xi_{\mathrm{train}}\subseteq\prmtrSet$, 
we define 
\begin{align}
\mathcal M_\Xi
&\coloneqq
\SteadyState(\Xi_{\mathrm{train}}),
&
\delta_\Xi
&\coloneqq
\sup_{\prmtr\in\prmtrSet}
\min_{\xi\in\Xi_{\mathrm{train}}}
\|
\SteadyState(\prmtr)-\SteadyState(\xi)
\|,
\label{eq:intrinsic_training_set_distance}\\
h_\Xi
&\coloneqq
\sup_{\prmtr\in\prmtrSet}
\min_{\xi\in\Xi_{\mathrm{train}}}
\|\prmtr-\xi\|_2,
&
L_u
&\coloneqq
\sup_{\prmtr\in\prmtrSet}
\|
D\SteadyState(\prmtr)
\|_{\mathrm{op}},
\label{eq:training_set_fill_distance_lipschitz_constant}
\end{align}
where $D$ denotes the Fréchet derivative, 
$\|\cdot\|_2$ the Euclidean norm 
 and $\|\cdot\|_{\mathrm{op}}$ the operator norm.

\begin{proposition}[Convergence for a finite training set]
\label[proposition]{prop:finite_training_set_convergence}
Suppose the assumptions of \Cref{prop:analytic_width_decay} hold,
and let $\Xi_{\mathrm{train}}\subseteq\prmtrSet$ be nonempty and finite.
For either formulation $\star\in\{\mathrm{eig},\mathrm{aff}\}$, let
$V_{1+k,\star}^{\Xi}$ denote the space produced by
\Cref{alg:greedy_reduced_basis} after $k$ snapshot enrichments.  
Then, there exist constants $C_1,C_2>0$, 
independent of~$r$ and of the training set, 
such that
\begin{equation}
\sup_{\xi\in\Xi_{\mathrm{train}}}
\dist\bigl(\SteadyState(\xi),V_{1+r,\star}^{\Xi}\bigr)
\,\leq\,
C_1\exp\Bigl(-C_2 r^{1/\pDim}\Bigr)
\label{eq:discrete_training_manifold_convergence}
\end{equation}
for every snapshot enrichment performed by the algorithm.
If the algorithm terminates because $\Xi_{\mathrm{act}}=\emptyset$,
then its final space~$V_{r,\star}^{\Xi}$ additionally satisfies
\begin{equation}
\sup_{\prmtr\in\prmtrSet}
\dist\bigl(\SteadyState(\prmtr),V_{r,\star}^{\Xi}\bigr)
\,\leq\,
\sqrt{C_\star\varepsilon_{\mathrm{tol}}}
+
\delta_\Xi
\,\leq\,
\sqrt{C_\star\varepsilon_{\mathrm{tol}}}
+
L_u h_\Xi,
\label{eq:finite_training_set_stopping_error}
\end{equation}
where
$C_\mathrm{eig}\coloneqq\gamma_{\min}^{-1}$ and
$C_\mathrm{aff}\coloneqq\beta_{\min}^{-2}$.
\end{proposition}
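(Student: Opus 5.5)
The plan is to treat the finite-training-set greedy as an exact (weak) greedy on the compact set $\mathcal M_\Xi=\SteadyState(\Xi_{\mathrm{train}})$ with parameter set $\Xi_{\mathrm{train}}$, and then apply \Cref{thm:weak_greedy_convergence_rates} together with \Cref{rem:fixed_initial_space_weak_greedy}. First, \Cref{prp:valid_weak_greedy} holds pointwise for every $\prmtr\in\prmtrSet$, hence in particular on $\Xi_{\mathrm{train}}$. The constants $\gamma_{\min},\beta_{\min},\nu_{\min},\varrho_{\max}$ are defined over all of $\prmtrSet$, so they do not depend on the training set. By \Cref{lem:error_equivalence_weak_greedy}, applied with the maximization set $\Xi_{\mathrm{act}}$, each selected $\prmtr_{\mathrm{next},\star}$ satisfies the weak greedy condition relative to $\sup_{\xi\in\Xi_{\mathrm{act}}}\dist(\SteadyState(\xi),V_r)$, with $\vartheta_\star$ as in the proof of \Cref{cor:idealized_greedy_convergence}.

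One subtlety is that the algorithm maximizes only over the active set, not over all of $\Xi_{\mathrm{train}}$. I will handle this by noting that an enrichment is performed only if $\Xi_{\mathrm{act}}\neq\emptyset$. The removed parameters have indicator at most $\varepsilon_{\mathrm{tol}}$, while the maximizer over $\Xi_{\mathrm{act}}$ has indicator greater than $\varepsilon_{\mathrm{tol}}$. The indicators are monotone in $r$ for nested spaces, so a removed parameter stays below tolerance. Hence the maximizer over $\Xi_{\mathrm{act}}$ is also a maximizer of $\Delta_\star^{(r)}$ over all of $\Xi_{\mathrm{train}}$. The selection is therefore a weak greedy step for $\mathcal M_\Xi$ with the same $\vartheta_\star$.

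Next, the width condition. Since $\mathcal M_\Xi\subseteq\mathcal M$, monotonicity gives $d_n(\mathcal M_\Xi)\leq d_n(\mathcal M)\leq\alpha_0\exp(-\alpha_1 n^{1/\pDim})$ by \Cref{prop:analytic_width_decay}, with constants independent of $\Xi_{\mathrm{train}}$. The set $\mathcal M_\Xi$ is finite, hence compact, and it lies in the unit ball by \Cref{lem:geometric_properties}. \Cref{thm:weak_greedy_convergence_rates}, used with initial space $V_1=\sspan_\R\{I\}$ as in \Cref{rem:fixed_initial_space_weak_greedy}, then yields \eqref{eq:discrete_training_manifold_convergence} with $C_1=\sqrt{2\alpha_0}/\vartheta_\star$ and $C_2=2^{-1-2/\pDim}\alpha_1$, for every step the algorithm actually performs. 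The theorem is stated for infinite sequences, and a terminated run is only a finite prefix. I will handle this by observing that the DeVore--Petrova--Wojtaszczyk estimate at step $r$ only uses the first $r$ steps. Alternatively, after termination the sequence can be continued by arbitrary strong greedy steps on $\mathcal M_\Xi$.

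For \eqref{eq:finite_training_set_stopping_error}, termination means $\Delta_\star^{(r)}(\xi)\leq\varepsilon_{\mathrm{tol}}$ for all $\xi\in\Xi_{\mathrm{train}}$. The reliability bounds in \Cref{prp:valid_weak_greedy} then give $\dist(\SteadyState(\xi),V_r)\leq\sqrt{C_\star\varepsilon_{\mathrm{tol}}}$. For an arbitrary $\prmtr\in\prmtrSet$, I choose $\xi$ attaining the minimum in $\delta_\Xi$. Because $\dist(\cdot,V_r)$ is $1$-Lipschitz, $\dist(\SteadyState(\prmtr),V_r)\leq\dist(\SteadyState(\xi),V_r)+\|\SteadyState(\prmtr)-\SteadyState(\xi)\|$, which gives the first inequality.

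For the second inequality, I need $\|\SteadyState(\prmtr)-\SteadyState(\xi)\|\leq L_u\|\prmtr-\xi\|_2$. I get this from the mean value inequality along the segment $[\xi,\prmtr]$, which lies in $\prmtrSet$ because the hyperrectangle is convex. Differentiability comes from \Cref{prop:analyticity_steady_state_map}, and $L_u<\infty$ because $D\SteadyState$ is continuous on a compact set. Taking the minimum over $\xi$ gives $\delta_\Xi\leq L_u h_\Xi$. The main obstacle I expect is the active-set bookkeeping in the second paragraph: showing that the pruned maximization is genuinely a greedy step over all of $\Xi_{\mathrm{train}}$. This rests on the monotonicity $\Delta_\star^{(r+1)}\leq\Delta_\star^{(r)}$ for nested spaces, which I will justify from the minimization characterizations \eqref{eq:abstract_EVP} and \eqref{eq:reduced_affine_residual_problem}.
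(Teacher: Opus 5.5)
Your proposal is correct and follows the paper's proof essentially step by step. The steps in common are: the active-set argument via monotonicity of the indicators, weak greedy selection on $\mathcal M_\Xi$ with the global weakness parameters, the bound $d_n(\mathcal M_\Xi)\leq d_n(\mathcal M)$, the fixed-initial-space remark, and then reliability combined with the $1$-Lipschitz distance and the mean-value estimate on the convex hyperrectangle. Your additional remark on applying the weak greedy theorem to a terminated (finite) run, either because the bound at step $r$ only uses the first $r$ steps or by continuing with strong greedy steps, is a valid detail that the paper leaves implicit.
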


\begin{proof}
First, observe that the implementation of the active training set $\Xi_{\mathrm{act}}$ 
in \Cref{alg:greedy_reduced_basis} 
selects a maximizer over the full training set $\Xi_{\mathrm{train}}$ at every enrichment: 
once a parameter is removed from~$\Xi_{\mathrm{act}}$,
its indicator is bounded by~$\varepsilon_{\mathrm{tol}}$ and remains so,
because the indicators are monotonically nonincreasing with~$r$.
As long as $\Xi_{\mathrm{act}}$ is nonempty, 
every active parameter has indicator value
strictly larger than~$\varepsilon_{\mathrm{tol}}$.
Hence, the maximizer over the active set is also a maximizer over~$\Xi_{\mathrm{train}}$.

The error equivalences from \Cref{prp:valid_weak_greedy} remain valid after restricting
the parameter set to~$\Xi_{\mathrm{train}}$.
Hence, for each formulation the algorithm is a weak greedy procedure
for the finite set~$\mathcal M_\Xi$ with the corresponding weakness parameter
as given in the proof of \Cref{cor:idealized_greedy_convergence}.
Since $\mathcal M_\Xi\subseteq\mathcal M$, 
we have
$d_n(\mathcal M_\Xi)
\,\leq\,
d_n(\mathcal M)
\,\leq\,
\alpha_0\exp\bigl(-\alpha_1 n^{1/\pDim}\bigr)$
for all $n\in\N$.
Hence, the weak greedy theorem, 
together with the fixed-initial-space argument from
\Cref{rem:fixed_initial_space_weak_greedy},
 proves \eqref{eq:discrete_training_manifold_convergence}.

For any subspace $V\subseteq\mathcal H$ and $\prmtr\in\prmtrSet$, choose
$\xi\in\Xi_{\mathrm{train}}$ minimizing the state-space distance in
\eqref{eq:intrinsic_training_set_distance}.  Since the distance to a fixed
subspace is $1$-Lipschitz, we have 
\[
\dist\bigl(\SteadyState(\prmtr),V\bigr)
\leq
\dist\bigl(\SteadyState(\xi),V\bigr)
+
\|\SteadyState(\prmtr)-\SteadyState(\xi)\|.
\]
If $\Xi_{\mathrm{act}}$ is empty, monotonicity of $\Delta_\star^{(r)}$ implies
\(
\max_{\xi\in\Xi_{\mathrm{train}}}
\Delta_\star^{(r)}(\xi)
\,\leq\,
\varepsilon_{\mathrm{tol}}.
\)
The reliability inequalities from \Cref{prp:valid_weak_greedy} then show that
$\sup_{\xi\in\Xi_{\mathrm{train}}}
\dist\bigl(\SteadyState(\xi),V_{r,\star}^{\Xi}\bigr)
\,\leq\,
\sqrt{C_\star\varepsilon_{\mathrm{tol}}}.$
This estimate and the preceding discretization argument yield 
the first inequality in \eqref{eq:finite_training_set_stopping_error}.

It remains to prove that $\delta_\Xi\leq L_uh_\Xi$.
By \Cref{prop:analyticity_steady_state_map},
$D\SteadyState$ is continuous on the compact set~$\prmtrSet$,
so $L_u<\infty$.
Moreover, the hyperrectangle~$\prmtrSet$ is convex.
Hence, the line segment between $\prmtr$ and~$\xi$ lies in~$\prmtrSet$, 
and the mean-value estimate gives
\(
\|
\SteadyState(\prmtr)-\SteadyState(\xi)
\|
\,\leq\,
L_u\|\prmtr-\xi\|_2.
\)
Taking the supremum proves $\delta_\Xi\leq L_u h_\Xi$,
 and hence, 
 the remaining
inequality in \eqref{eq:finite_training_set_stopping_error}.
\end{proof}

\section{Numerical experiments}\label{sec:num}

This section investigates the approximation properties and 
computational performance of the RB methods 
from \Cref{sec:RBM}.
After introducing the test problems and numerical protocol 
in \Cref{subsec:num_setup}, 
we study empirical solution-set compressibility and
its system-size dependence 
in \Cref{subsec:num_compressibility}.
We examine greedy convergence, RB approximation accuracy,
and offline construction costs
in \Cref{subsec:num_greedy_convergence},  
assess the residual indicators and state-error estimates 
in \Cref{subsec:num_error_estimates},
and 
quantify the online computational costs 
and demonstrate the reduced evaluation of physical observables in \Cref{subsec:num_performance}.

Our implementation is conducted in Python, 
and, together with the data and scripts to reproduce the numerical results,
are available at \url{https://doi.org/10.18419/DARUS-6519}.

\subsection{Test problems and computational protocol}
\label{subsec:num_setup}

Following \cite{ChristiansenBaranPaaske2025}, 
we consider two parametric Lindbladians on open chains of length $L$,
both with Hilbert space dimension $\fdim=2^L$.
For $C\in\C^{\fdim\times\fdim}$, we abbreviate
\[
\mathcal D_C(X)
\, \coloneqq \, 
CXC^\dagger-\frac12\{C^\dagger C,X\}.
\]

\subsubsection{Boundary-driven Fermi--Hubbard chain}

Our first test problem is a boundary-driven spinless Fermi--Hubbard (FH)
chain, with one fermionic mode per site.
We consider 
$\prmtr=({\FHinteraction},\delta)
\in
\prmtrSet_{\mathrm{FH}}
\coloneqq
[0,2]\times[-0.25,0.25].$
For $j=1,\ldots,L$, let $c_j$ and $c_j^\dagger$ denote the standard
fermionic annihilation and creation operators at site~$j$, 
satisfying $\{c_j,c_k\}=0$ and $\{c_j,c_k^\dagger\}=\delta_{jk}I$;
see, e.g., \cite[Ch.~2]{AltlandSimons2023}. 
The corresponding occupation operators are given by  
$n_j\coloneqq c_j^\dagger c_j$ and $\widetilde n_j\coloneqq n_j - I/2$
for $j=1,\ldots,L$. 
For a fixed hopping amplitude $t=1$, 
the Hamiltonian reads as 
\begin{equation}
H_{\mathrm{FH}}({\FHinteraction})
=
-t\sum_{j=1}^{L-1}
\left(
c_{j+1}^\dagger c_j+c_j^\dagger c_{j+1}
\right)
+
{\FHinteraction}\sum_{j=1}^{L-1}
\widetilde n_j\widetilde n_{j+1}.
\label{eq:num_fh_hamiltonian}
\end{equation}
With the reference boundary coupling $\Gamma=0.3$, 
the corresponding
Lindbladian is
\begin{equation*}
\begin{aligned}
\mathcal L_{\mathrm{FH}}({\FHinteraction},\delta)X
\,={}&
-\ImgUnit[H_{\mathrm{FH}}( {\FHinteraction}),X]
+
(\Gamma+\delta)\mathcal D_{c_1^\dagger}(X)
+
(\Gamma-\delta)\mathcal D_{c_L}(X).
\end{aligned}
\end{equation*}
Hence, $\mathcal L_{\mathrm{FH}}$ satisfies the affine decomposition
\eqref{eq:Lindbladian_affine_decomposition} with
$\NrHam=\NrDis=2$ and 
 the coefficient functions
$\theta_1=1$, $\theta_2 = {\FHinteraction}$,
$\phi_1 =\Gamma+\delta$ and $\phi_2 =\Gamma-\delta$.

\subsubsection{XYZ chain}

Our second test problem is a dissipative spin-$1/2$ XYZ chain;
see, e.g., \cite{LeeGopalakrishnanLukin2013} and
\cite{PopkovProsenZadnik2020}.
We consider
$\prmtr=(J_x,J_y)
\in
\prmtrSet_{\mathrm{XYZ}}
\coloneqq
[-1,1]^2.$
Let $\sigma^x, \sigma^y, \sigma^z$ denote the Pauli matrices; 
see, e.g., \cite[Ch.~2]{AltlandSimons2023}. 
For $\alpha\in\{x,y,z\}$ and $j=1,\ldots,L$, 
we write
$\sigma_j^\alpha
\coloneqq
I_2^{\otimes(j-1)}
\otimes\sigma^\alpha
\otimes I_2^{\otimes(L-j)}$
and 
$\sigma_j^-
\coloneqq
\left(
\sigma_j^x-\ImgUnit\sigma_j^y
\right)/2.$
With $J_z=1$ fixed, 
the Hamiltonian is given by 
\begin{equation*}
H_{\mathrm{XYZ}}(J_x,J_y)
\, =
\sum_{j=1}^{L-1}
\left(
J_x \sigma_j^x\sigma_{j+1}^x
+
J_y \sigma_j^y\sigma_{j+1}^y
+
J_z \sigma_j^z\sigma_{j+1}^z
\right),
\end{equation*}
and the corresponding Lindbladian reads as
\begin{equation*}
\mathcal L_{\mathrm{XYZ}}(J_x,J_y)X
\, =
-\ImgUnit[H_{\mathrm{XYZ}}(J_x,J_y),X]
+
\sum_{j=1}^L
\mathcal D_{\sigma_j^-}(X).
\end{equation*}
This has the affine form
\eqref{eq:Lindbladian_affine_decomposition} with
$\NrHam=3$, $\NrDis=1$ and $\theta_1=J_x$, $\theta_2 =J_y$,
$\theta_3 = 1$, $\phi_1=1$.

\subsubsection{High-fidelity solver, parameter grids, and error measures}
\label{subsubsec:exp_general_config}

Let $ \mathcal{Q} X\coloneqq\tr(X)I $.  
We assemble the sparse matrix representation of a Lindbladian via vectorization and the Kronecker product,
and solve the trace-lifted normal equation
\begin{equation}
\bigl(
\Lindbladian(\prmtr)^\dagger\Lindbladian(\prmtr)
+
\frac{\alpha}{\fdim} \mathcal{Q}
\bigr)
\SteadyState(\prmtr)
\, = \, 
\frac{\alpha}{\fdim}I 
\label{eq:num_hf_solver}
\end{equation}
by the conjugate-gradient method with relative tolerance $10^{-14}$,
using $\alpha \coloneqq 10$.
Since $\ker \mathcal{Q}=A_0$, 
\eqref{eq:num_hf_solver} having a unique solution
is equivalent to  $\Lindbladian(\prmtr)$ having a unique steady state,
and also to $\Lindbladian(\prmtr)^\dagger\Lindbladian(\prmtr)+\mathcal{Q}$ being positive definite.
To ensure numerical stability, 
the computed solution is symmetrized and normalized to unit trace.

The spectral gap $\gamma(\prmtr)$ and the trace-zero stability constant
$\beta(\prmtr)$ are evaluated by Lanczos methods at the sampled parameters.

POD and both greedy procedures use tensor-product Chebyshev--Lobatto
training grids of size $49^2$ for FH and $61^2$ for XYZ;
errors are evaluated on $13^2$ uniform validation grids.
{These training-grid sizes were selected by refinement:
for the same $L$, coarser grids yield visibly faster singular-value decay,
indicating that the snapshot ensemble is not yet sufficiently resolved.}

At every training parameter, the eigensolver detected a single zero mode
separated from the remaining spectrum.  This confirms uniqueness of the
steady state at the sampled parameters and indicates uniqueness throughout
$\prmtrSet$.

The greedy spaces are enriched until the error indicator 
is below the prescribed tolerance $10^{-13}$, 
or until the prescribed maximal RB dimension is reached.
For the POD spaces used in the method comparison, singular values
below $\varepsilon \coloneqq 3.2\times10^{-7} \approx {10^{-13/2}}$ are discarded.

\subsection{Empirical compressibility and system-size dependence}
\label{subsec:num_compressibility}

To investigate the compressibility of the steady-state solution sets,
let
$\Xi_{\mathrm{train}}=\{\prmtr_1,\ldots,\prmtr_M\}$
be the corresponding training grid of each model.
For fixed $L$, let $\sigma_{1}(L)\geq\sigma_{2}(L)\geq\cdots$ denote the
singular values of the matrix with 
the vectorized steady states
$\operatorname{vec}\SteadyState(\prmtr_j)$ 
for $j=1,\ldots,M$ as columns.
Their decay measures how well the snapshot ensemble can be approximated,
in the mean-square Frobenius norm, by a low-dimensional linear space; see
\Cref{subsubsec:POD_vs_greedy}.

Since both parameter domains are two-dimensional rectangles,
\Cref{prop:analytic_width_decay} gives, for each fixed~$L$, 
a sub-exponential width bound of the form
$\exp(-c\sqrt{n})$.
Accordingly, \Cref{fig:num_snapshot_spectra} plots the normalized
singular values for $L=5,\ldots,8$ against~$\sqrt{n}$.
For both models, the singular values decay over several orders of magnitude,
demonstrating the empirical compressibility of the sampled solution sets.
Their approximately \emph{linear behavior on the logarithmic scale} is
qualitatively consistent with the width bound.
The dashed lines are least-squares fits over
$\sigma_r(L)/\sigma_1(L)\in[10^{-10},10^{-1}]$.
As $L$ increases from $5$ to~$8$, the fitted slopes become less negative,
changing from $-3.39$ to $-1.97$ for the FH chain and from $-1.22$ to
$-0.84$ for the XYZ chain.
Thus, the decay becomes slower with increasing system size and remains
faster for the FH chain than for the XYZ chain.

\begin{figure}[t]
  \centering
  \begin{subfigure}{0.42\textwidth}
    \centering
    \includegraphics[width=\textwidth, trim=0 8mm 0 5mm]{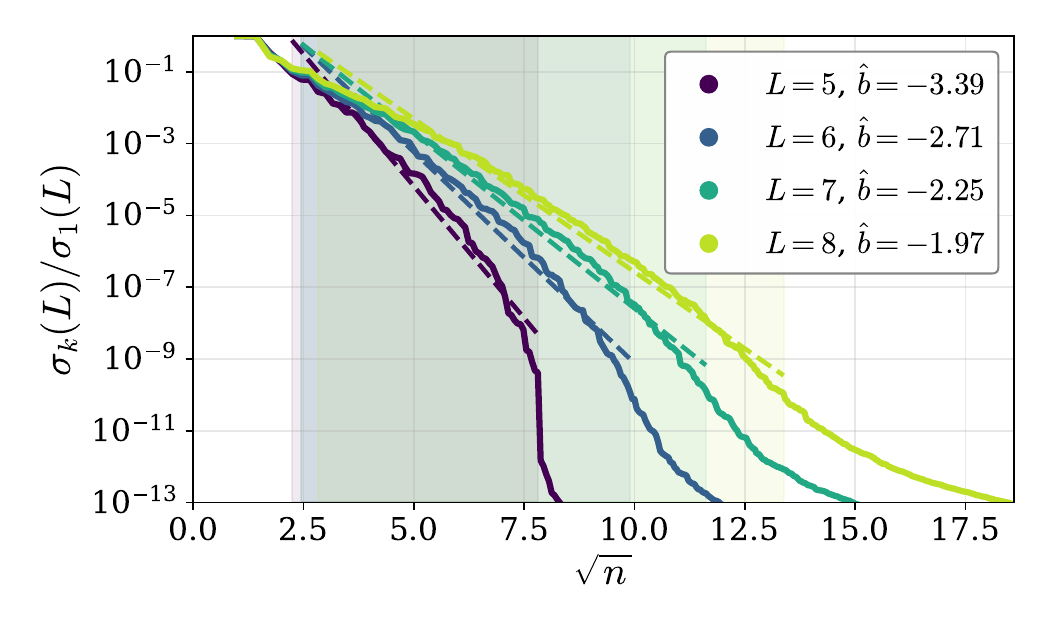}
    \caption{FH chain.}
    \label{fig:num_snapshot_spectrum_sqrt_fh}
  \end{subfigure}
  \qquad
  \begin{subfigure}{0.42\textwidth}
    \centering
    \includegraphics[width=\textwidth, trim=0 8mm 0 5mm]{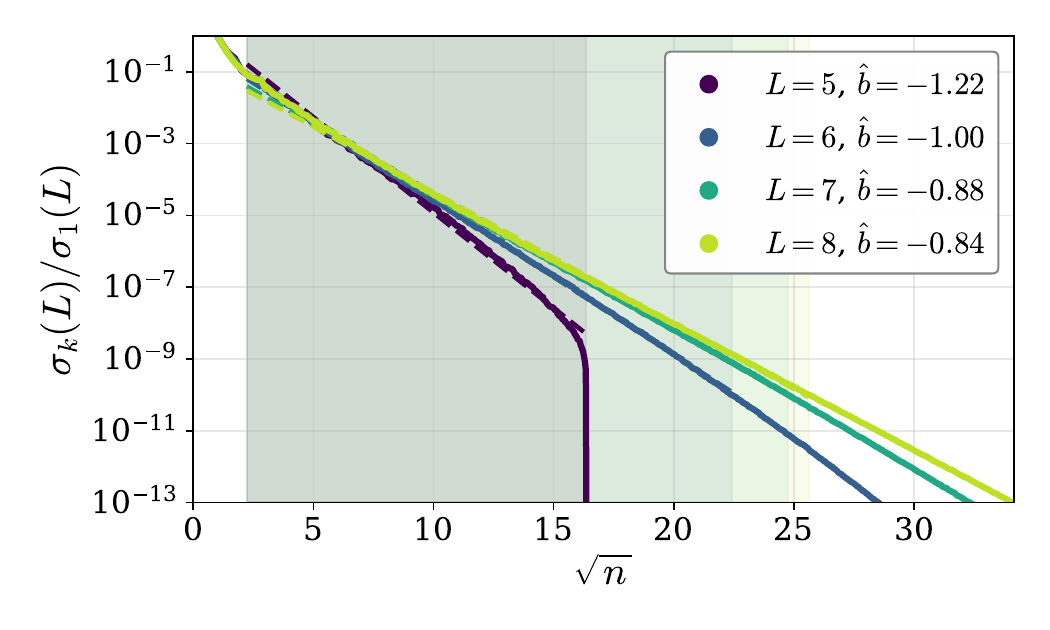}
    \caption{XYZ chain.}
    \label{fig:num_snapshot_spectrum_sqrt_xyz}
  \end{subfigure}
  \caption{Normalized singular values  of the snapshot matrices in $\log$-scale plotted against $\sqrt{n}$.  
  The shaded regions 
  indicate the fitting intervals and the dashed lines 
  the corresponding least-squares fits; 
  $\hat b$ is the fitted slope of
  $\log(\sigma_n(L)/\sigma_1(L))$ against $\sqrt{n}$ over that interval.}
  \label{fig:num_snapshot_spectra}
\end{figure}

For the singular-value cutoff
$\varepsilon=3.2\times10^{-7}$ used in the subsequent computations, 
\Cref{tab:num_truncated_snapshot_rank} reports the retained dimension 
$r_\varepsilon(L) \coloneqq \#\{n:\sigma_{n}(L)\geq\varepsilon\}$ and,
in parentheses, its fraction of the ambient dimension $\fdim^2=4^L$.
Although $r_\varepsilon(L)$ increases with $L$, 
it grows much more slowly
than $\fdim^2$.
Moreover, the XYZ chain consistently requires more modes than the
FH chain, in agreement with the slower singular-value decay
in \Cref{fig:num_snapshot_spectra}.

Overall, both solution sets remain empirically compressible over the considered range of \(L\), 
with the FH chain requiring substantially fewer modes than the XYZ chain.

\begin{table}[t]
  \centering
  \begin{tabular}{c c c c}
    \toprule
    $L$ & $\fdim^2 = 4^L$
    & FH & XYZ \\
    \midrule
    $5$ & $1\,024$  & $50\;(4.88\%)$   & $242\;(23.6\%)$ \\
    $6$ & $4\,096$  & $75\;(1.83\%)$   & $326\;(7.96\%)$ \\
    $7$ & $16\,384$ & $100\;(0.610\%)$ & $381\;(2.33\%)$ \\
    $8$ & $65\,536$ & $127\;(0.194\%)$ & $396\;(0.604\%)$ \\
    \bottomrule
  \end{tabular}
  \caption{ Retained POD dimensions $r_\varepsilon(L)$ for
  $\varepsilon=3.2\times10^{-7}$.  
  The values in parentheses are
  $r_\varepsilon(L)/\fdim^2$.
  }
  \label{tab:num_truncated_snapshot_rank}
\end{table}

\subsection{Offline cost, greedy convergence, and accuracy at fixed RB dimension }
\label{subsec:num_greedy_convergence}

We compare the RB subspaces constructed by EigGreedy and AffGreedy,
with POD as a benchmark.

\subsubsection{Offline cost}
\Cref{tab:num_offline_cost} reports the total computation time for constructing the
final reduced models, including the high-fidelity solves and the assembly
of the reduced model.
For both examples, the greedy procedures substantially reduce the offline
time compared with POD.
This is not surprising, since POD requires the full snapshot ensemble, 
while the greedy procedures only require a small fraction of the high-fidelity solves.

AffGreedy is also faster than EigGreedy.
Its final RB dimension is smaller, and so it requires fewer
high-fidelity solves.
Moreover, the total time spent evaluating the error indicators over the
training grid is $15.5\,\mathrm{s}$ for AffGreedy and
$27.2\,\mathrm{s}$ for EigGreedy in the FH example, and
$728\,\mathrm{s}$ and $1736\,\mathrm{s}$, respectively, in the XYZ example.

\begin{table}[htbp]
  \centering
  \begin{tabular}{c c c c c}
    \toprule
    & $\bar t_{\mathrm{h.f.}}$ & POD & EigGreedy & AffGreedy \\
    \midrule
    FH
    & $12.4\,\mathrm{s}$
    & $9.20\,\mathrm{h}\;(r=127)$
    & $0.476\,\mathrm{h}\;(r=133)$
    & $0.423\,\mathrm{h}\;(r=115)$ \\
    XYZ
    & $7.11\,\mathrm{s}$
    & $8.17\,\mathrm{h}\;(r=396)$
    & $1.56\,\mathrm{h}\;(r=483)$
    & $1.19\,\mathrm{h}\;(r=435)$ \\
    \bottomrule
  \end{tabular}
  \caption{Mean time $\bar t_{\mathrm{h.f.}}$ per high-fidelity solve on the
  validation grid and total offline computation times for the final models at $L=8$, 
  measured serially on the same computer.  
  The final RB dimensions are given
  in parentheses.}
  \label{tab:num_offline_cost}
\end{table}

\subsubsection{Greedy convergence.}
By \Cref{cor:idealized_greedy_convergence} 
and \Cref{prop:finite_training_set_convergence},
both greedy procedures 
possess sub-exponential convergence in the RB dimension. 
To examine this behavior numerically,
\Cref{fig:num_greedy_decay} compares the squared POD singular values
$\sigma_k^2$ with 
$\max_{\prmtr \in \Xi_{\mathrm{train}}}\Delta_{\mathrm{eig}}^{(r)}$ and $\max_{\prmtr  \in \Xi_{\mathrm{train}}}\Delta_{\mathrm{aff}}^{(r)}$
for $L=8$.
The three curves have a similar form and decay over several orders of
magnitude at comparable rates.
This is consistent with the sub-exponential decay predicted for the
two greedy constructions.
As in \Cref{subsec:num_compressibility}, 
we also observe that the decay is considerably faster
for the FH chain than for the XYZ chain.

\begin{figure}[htbp]
  \centering
  \begin{subfigure}{0.32\textwidth}
    \centering
    \includegraphics[width=\textwidth, trim = 0 8mm 0 7mm]{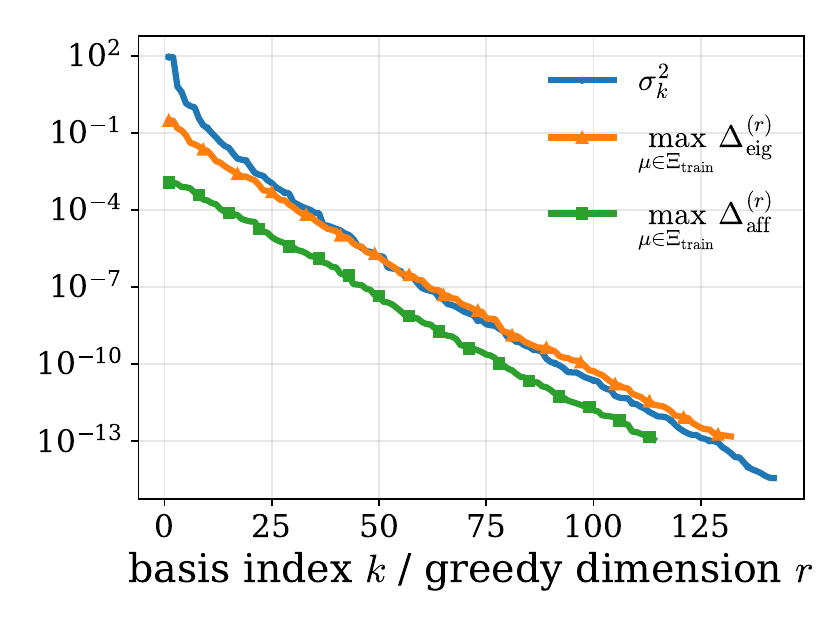}
    \caption{FH chain.}
    \label{fig:num_greedy_decay_fh}
  \end{subfigure}\qquad
  \begin{subfigure}{0.32\textwidth}
    \centering
    \includegraphics[width=\textwidth, trim = 0 8mm 0 7mm]{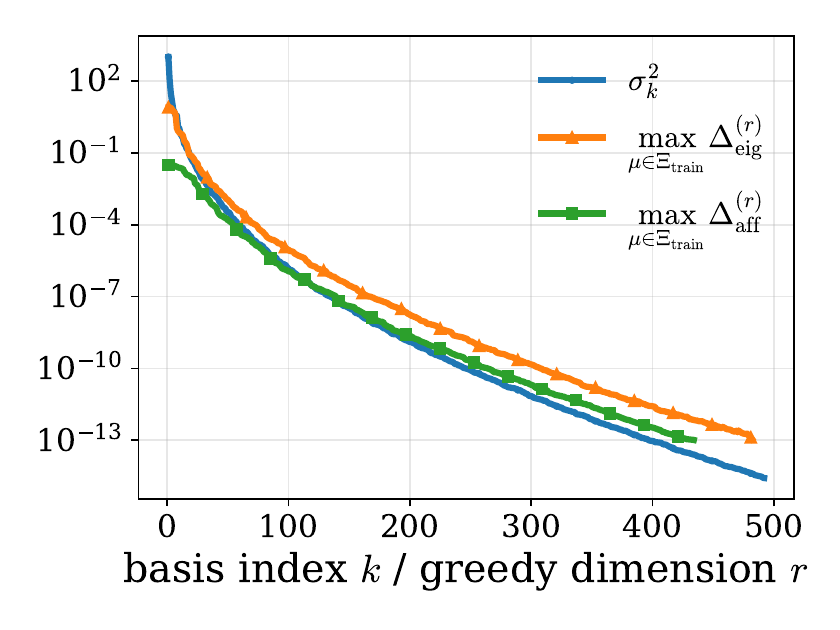}
    \caption{XYZ chain.}
    \label{fig:num_greedy_decay_xyz}
  \end{subfigure}
  \caption{Squared POD singular values and maximal value of the greedy indicators on the training grid for 
  $L=8$.  
  }
  \label{fig:num_greedy_decay}
\end{figure}

The error equivalences in \Cref{prp:valid_weak_greedy} relate both greedy
indicators to the squared projection error.
Therefore, we compare the maximal projection errors on the training grids at
common RB dimensions in \Cref{fig:num_fixed_rank_projection}.
The {RB dimension} is increased in steps of $10$ for the FH chain and $30$
for the XYZ chain, up to the smallest final dimension of the three
constructions, namely $115$ and $396$, respectively.
POD gives the smallest projection errors in all cases.
Between the greedy spaces, AffGreedy generally gives the smaller absolute
projection error, while EigGreedy gives the smaller relative projection error
at the larger {RB dimensions}.
The validation-grid curves have comparable magnitudes and show the same
behavior.
This reversal between EigGreedy and AffGreedy is consistent with the different normalizations in the selection rules:
EigGreedy minimizes the residual of a unit-Frobenius-norm state and is thus
more closely aligned with relative accuracy, whereas AffGreedy imposes unit
trace and measures the residual on the scale of the physical state.

\begin{figure}[htbp]
  \centering
  \begin{subfigure}{0.24\textwidth}
    \centering
    \includegraphics[width=\textwidth, trim = 7mm 12mm 7mm 7mm]{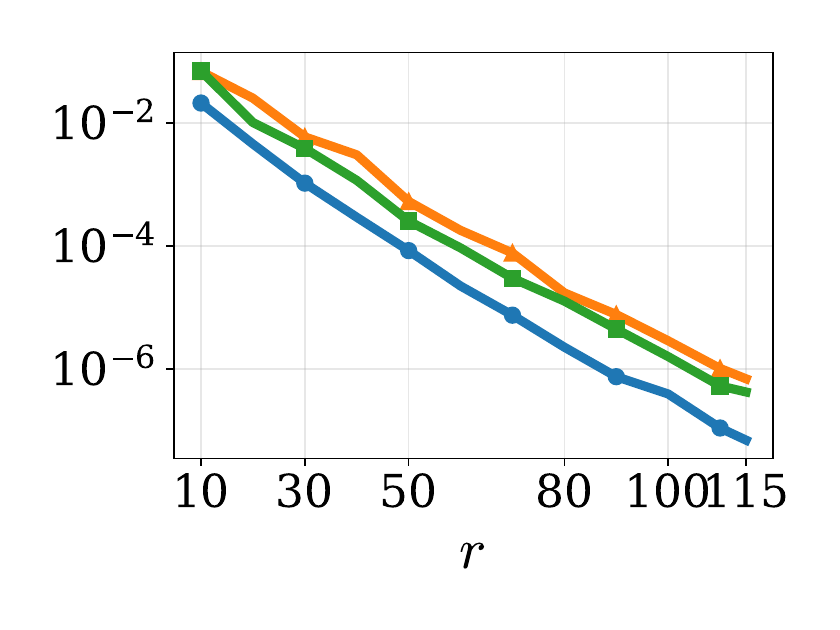}
    \caption{FH chain, abs.~error.}
    \label{fig:num_fixed_rank_proj_abs_fh}
  \end{subfigure}
  \hfill
  \begin{subfigure}{0.24\textwidth}
    \centering
    \includegraphics[width=\textwidth, trim = 7mm 12mm 7mm 7mm]{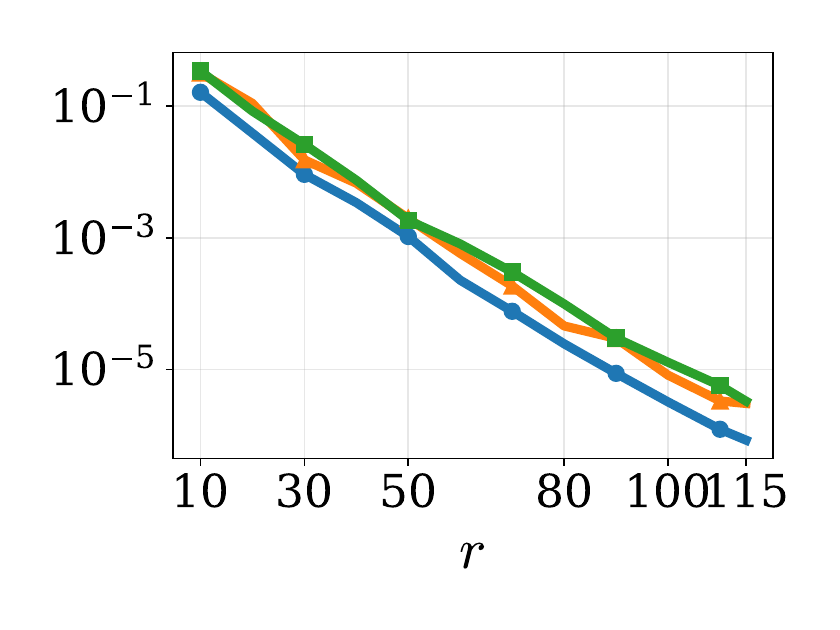}
    \caption{FH chain, rel.~error.}
    \label{fig:num_fixed_rank_proj_rel_fh}
  \end{subfigure}
  \hfill
  \begin{subfigure}{0.24\textwidth}
    \centering
    \includegraphics[width=\textwidth, trim = 7mm 12mm 7mm 7mm]{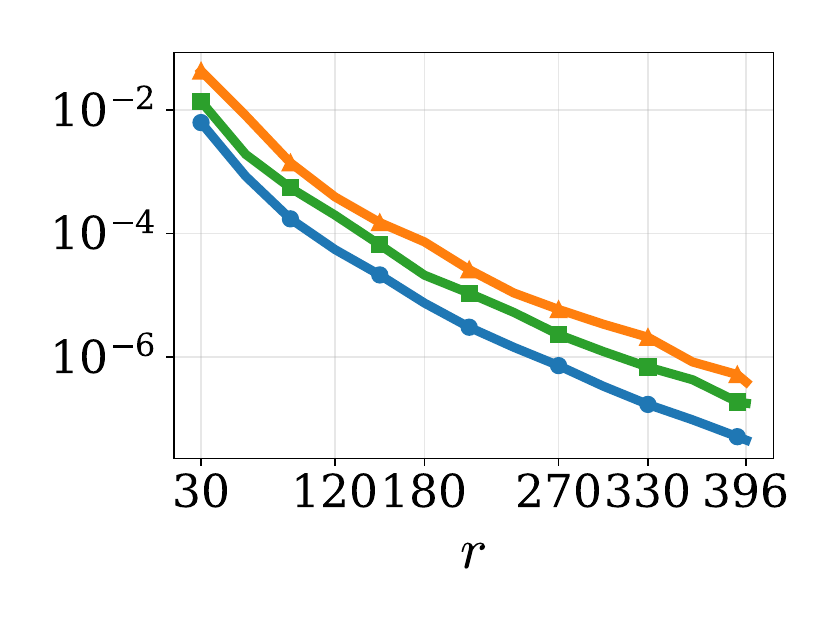}
    \caption{XYZ chain, abs.~error.}
    \label{fig:num_fixed_rank_proj_abs_xyz}
  \end{subfigure}
  \hfill
  \begin{subfigure}{0.24\textwidth}
    \centering
    \includegraphics[width=\textwidth, trim = 7mm 12mm 7mm 7mm]{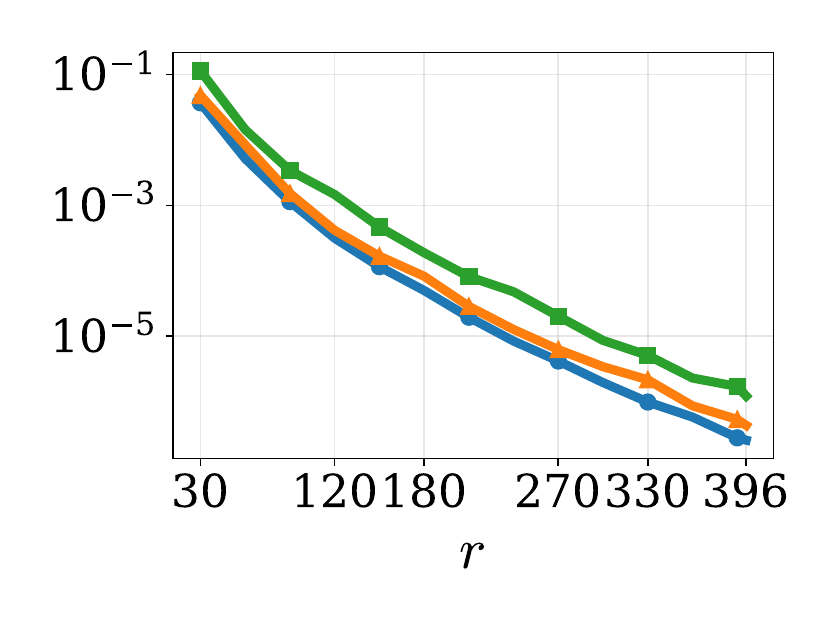}
    \caption{XYZ chain, rel.~error.}
    \label{fig:num_fixed_rank_proj_rel_xyz}
  \end{subfigure}
  \caption{Maximal projection errors $\max_{\prmtr \in \Xi_{\mathrm{train}}}\dist(\SteadyState(\prmtr), V_r)$ on the training grids for
  $L=8$, evaluated at common {RB dimensions} $r$.  The blue, orange, and
  green lines correspond to POD, EigGreedy, and AffGreedy, respectively.}
  \label{fig:num_fixed_rank_projection}
\end{figure}

\subsubsection{Accuracy at fixed {RB dimension}}
We next compare the online approximations at the same {RB dimensions}.
For POD, the online state is computed with the affine residual formulation.
For the greedy spaces, we use the corresponding online reduced solves.

\Cref{fig:num_fixed_rank_accuracy} reports the maximal absolute and relative
approximation errors on the validation grid.
This assesses the online accuracy at parameters not used to construct the
reduced spaces.
Both errors decrease systematically with the {RB dimension}, with POD
providing the most accurate approximation at a prescribed {RB dimension}.
Among the two greedy methods, AffGreedy gives the smaller approximation error
in both measures over nearly the entire range of {RB dimensions}.
The training-grid results show the same ordering and closely follow the
validation curves.
Thus, the advantage of EigGreedy in relative projection error does not carry
over to the approximation error.
The affine formulation enforces unit trace within the reduced solve, whereas
the eigenvalue formulation normalizes the trace afterwards; this additional
normalization may contribute to the larger approximation error of the latter.
At the largest common  {RB dimensions}, the maximal absolute errors of all three methods
are below $4.2\times10^{-5}$ and the maximal relative errors are below
$2.7\times10^{-5}$ for both models.

The attainable accuracy of the residual-driven greedy constructions is also
limited in finite precision.  Their indicators have a quadratic-form
character: they are squared residuals associated with
$\Lindbladian(\prmtr)^\dagger\Lindbladian(\prmtr)$ and, by
\Cref{prp:valid_weak_greedy}, scale like the squared projection error.
Consequently, a projection-error target of a given order requires resolving
indicator values roughly of its square, with the stability factors $\gamma$
and $\beta^2$ imposing a further reduction for EigGreedy and AffGreedy,
respectively.  Hence, reliable comparison over the training grid can break
down before the vector errors reach numerical precision.
POD is less affected during basis construction, 
since it extracts the reduced space from the state snapshots themselves, 
which partly explains its advantage at the highest accuracies considered here.

Although positive semidefiniteness is not enforced, negative eigenvalues of
the reconstructed approximations decrease rapidly with the  {RB dimension}~$r$.
At the final greedy dimensions, the smallest eigenvalues over the validation grids
are at least $1.5\times10^{-9}$ for the FH chain and
$-6.7\times10^{-10}$ for the XYZ chain, so no significant loss of positivity
is observed.

\begin{figure}[htbp]
  \centering
  \begin{subfigure}{0.24\textwidth}
    \centering
    \includegraphics[width=\textwidth, trim = 7mm 12mm 7mm 7mm]{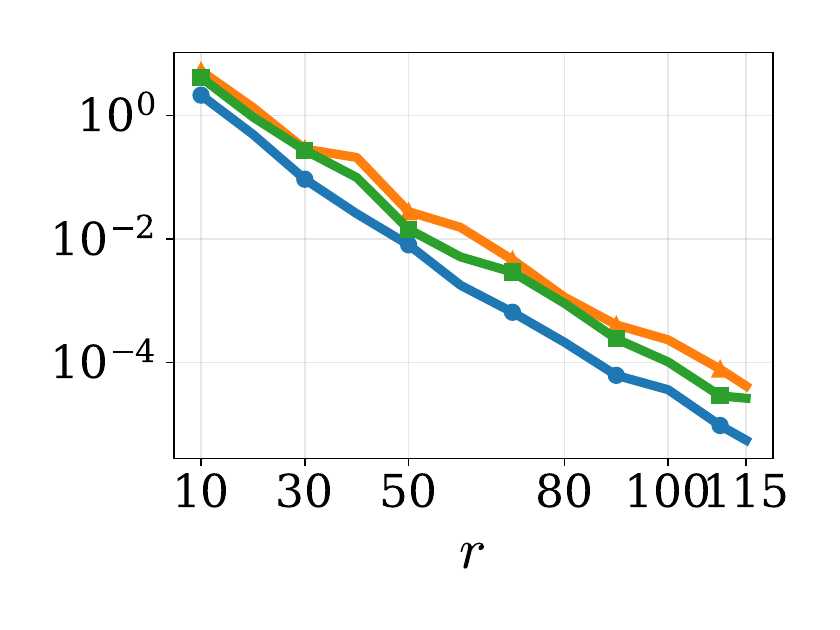}
    \caption{FH chain, abs.~error.}
    \label{fig:num_fixed_rank_abs_fh}
  \end{subfigure}
  \hfill
  \begin{subfigure}{0.24\textwidth}
    \centering
    \includegraphics[width=\textwidth, trim = 7mm 12mm 7mm 7mm]{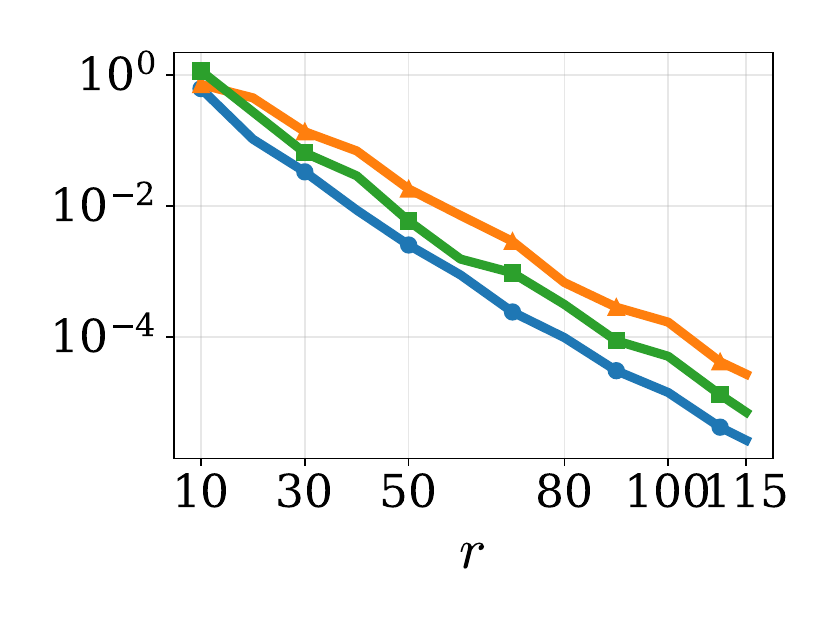}
    \caption{FH chain, rel.~error.}
    \label{fig:num_fixed_rank_rel_fh}
  \end{subfigure}
  \hfill
  \begin{subfigure}{0.24\textwidth}
    \centering
    \includegraphics[width=\textwidth, trim = 7mm 12mm 7mm 7mm]{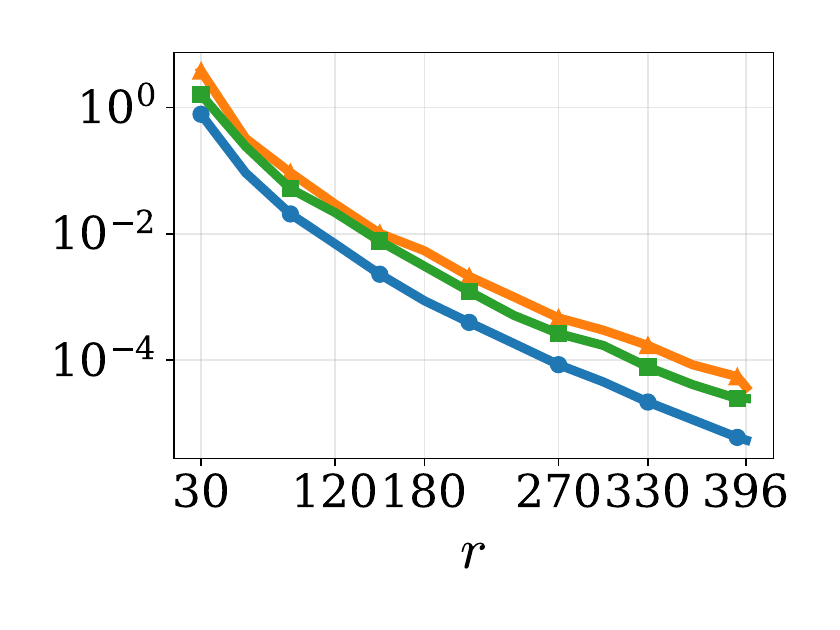}
    \caption{XYZ chain, abs.~error.}
    \label{fig:num_fixed_rank_abs_xyz}
  \end{subfigure}
  \hfill
  \begin{subfigure}{0.24\textwidth}
    \centering
    \includegraphics[width=\textwidth, trim = 7mm 12mm 7mm 7mm]{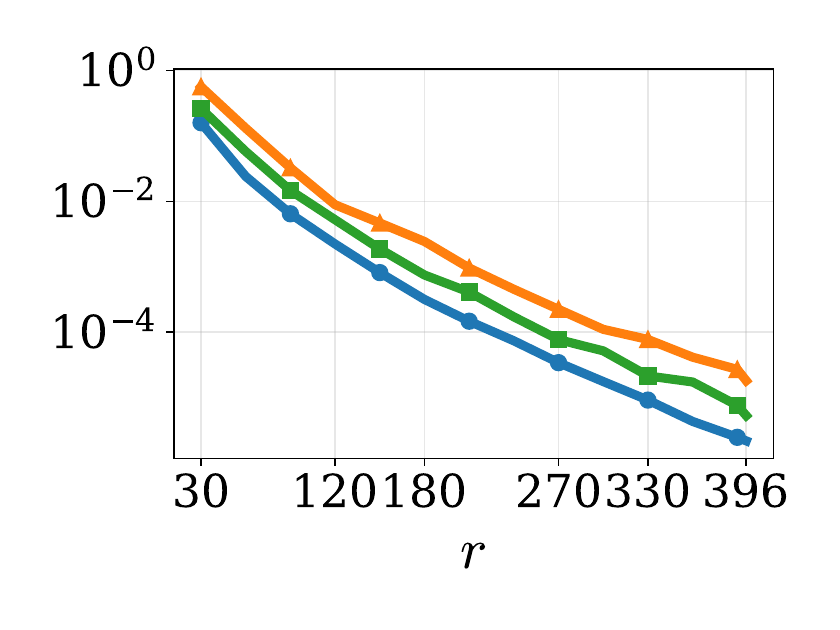}
    \caption{XYZ chain, rel.~error.}
    \label{fig:num_fixed_rank_rel_xyz}
  \end{subfigure}
  \caption{Maximal approximation errors 
  $\max_{\prmtr \in \Xi_{\mathrm{val}}}\|\SteadyState(\prmtr)- \rho^{(r)}_\star(\prmtr)\|$ on the uniform validation grids for
  $L=8$, evaluated at common {RB dimensions} $r$.  
  The blue, orange, and
  green lines correspond to POD, EigGreedy, and AffGreedy, respectively.}
  \label{fig:num_fixed_rank_accuracy}
\end{figure}

\subsection{Greedy indicators and state-error estimates}
\label{subsec:num_error_estimates}

We first examine how the greedy indicators reflect the projection error, 
and then how the residual and the trace-zero stability constant bound the
online state error.

\subsubsection{Indicator effectivity}
The indicator--projection-error equivalences in
\Cref{prp:valid_weak_greedy} motivate the effectivity quotients
$  q_{\star,r}(\prmtr)
  \, \coloneqq \, 
  {\Delta_{\star}^{(r)}(\prmtr)} \big/
       {\dist(\SteadyState(\prmtr),V_{r,\star})^2} $
for $\star\in\{\mathrm{eig},\mathrm{aff}\}$.
By \Cref{prp:valid_weak_greedy}, $q_{\star,r}$ is bounded 
from above and away from zero, 
uniformly in $\prmtr$ and $r$.
We examine this property for $L=8$ on the validation grids.

The left and middle panels of \Cref{fig:num_indicator_effectivity} compare
the maximal indicators and squared projection errors, while the right panels
show the median of $q_{\star,r}$ and its range over the validation grid.
Once the projection error approaches numerical precision, the quotient is no
longer informative.
Hence, we discard parameter points satisfying
\(
\dist(\SteadyState(\prmtr),V_{r,\star})
\leq 10^{-7}\|\SteadyState(\prmtr)\|.
\)

The right panels also relate the observed effectivities to the weakness
parameters $\vartheta$ in the greedy convergence analysis.  By the argument of
\Cref{lem:error_equivalence_weak_greedy}, the square root of the ratio of the
minimum to the maximum of $q_{\star,r}$ is the sampled analogue of the weakness
parameter.  Thus, a narrower effectivity band indicates a stronger greedy
selection rule.

For both models and both greedy procedures, the indicator and squared
projection error have a similar curve form and 
decay at comparable rates over the  {RB dimensions} $r$.
After the initial dimensions, 
the median effectivity depends only weakly on $r$.
For the FH chain, 
it is of order $10 $ for EigGreedy and order $1$ for AffGreedy; 
for the XYZ chain the corresponding orders are $100$ and $10$.
Moreover, AffGreedy gives the narrower effectivity range for both models,
indicating a smaller variation and,  
a larger "intrinsic" weakness parameter $\vartheta$ 
in the sense of \Cref{thm:weak_greedy_convergence_rates}.
The same qualitative behavior is observed for smaller $L$.
Thus, the results support the error equivalences on the sampled parameters.

\begin{figure}[htbp]
  \centering
  \begin{subfigure}{0.32\textwidth}
    \centering
    \includegraphics[width=\textwidth, trim=0 9mm 0 9mm]{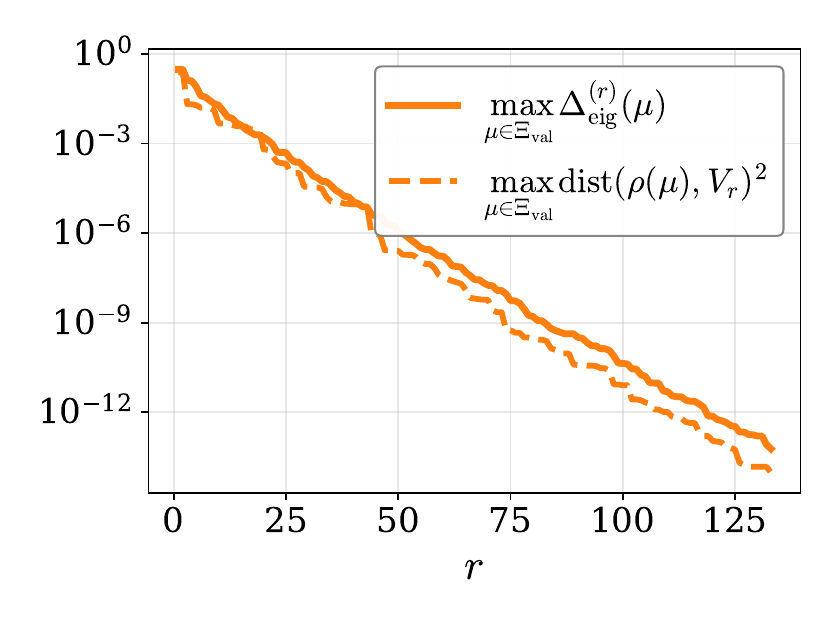}
    \caption{FH, EigGreedy.}
    \label{fig:num_effectivity_eig_fh}
  \end{subfigure}\hfill
  \begin{subfigure}{0.32\textwidth}
    \centering
    \includegraphics[width=\textwidth, trim=0 9mm 0 9mm]{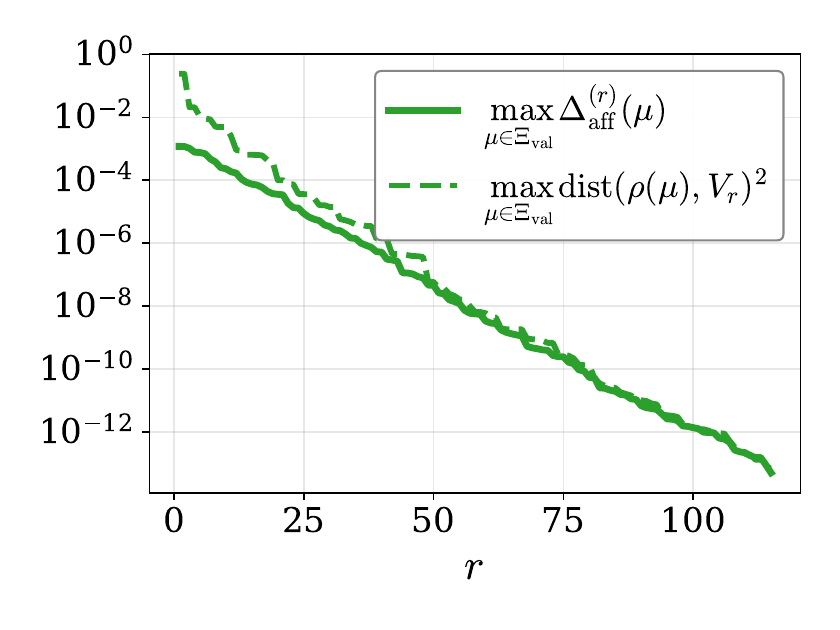}
    \caption{FH, AffGreedy.}
    \label{fig:num_effectivity_aff_fh}
  \end{subfigure}\hfill
  \begin{subfigure}{0.32\textwidth}
    \centering
    \includegraphics[width=\textwidth, trim=0 9mm 0 9mm]{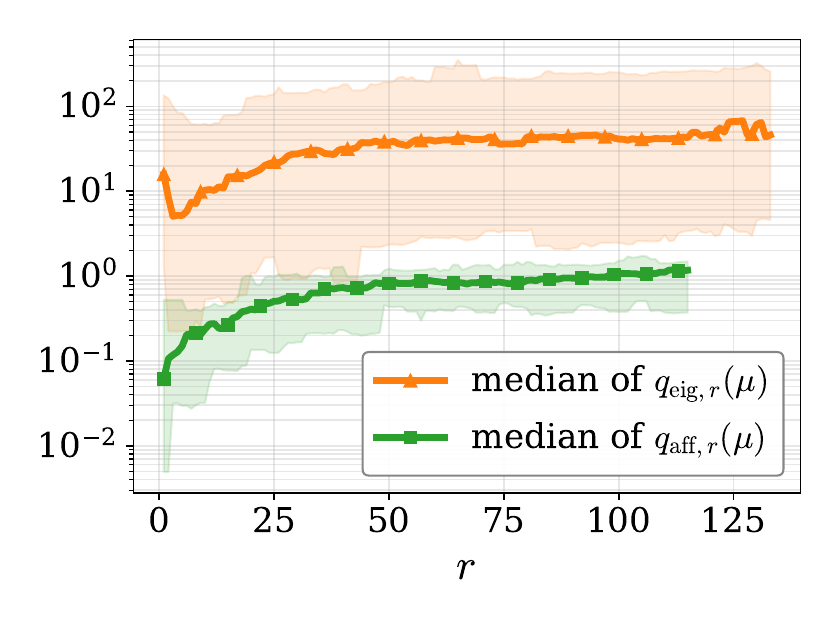}
    \caption{FH effectivity.}
    \label{fig:num_effectivity_band_fh}
  \end{subfigure}
  \vspace{3mm}

  \begin{subfigure}{0.32\textwidth}
    \centering
    \includegraphics[width=\textwidth, trim=0 9mm 0 5mm]{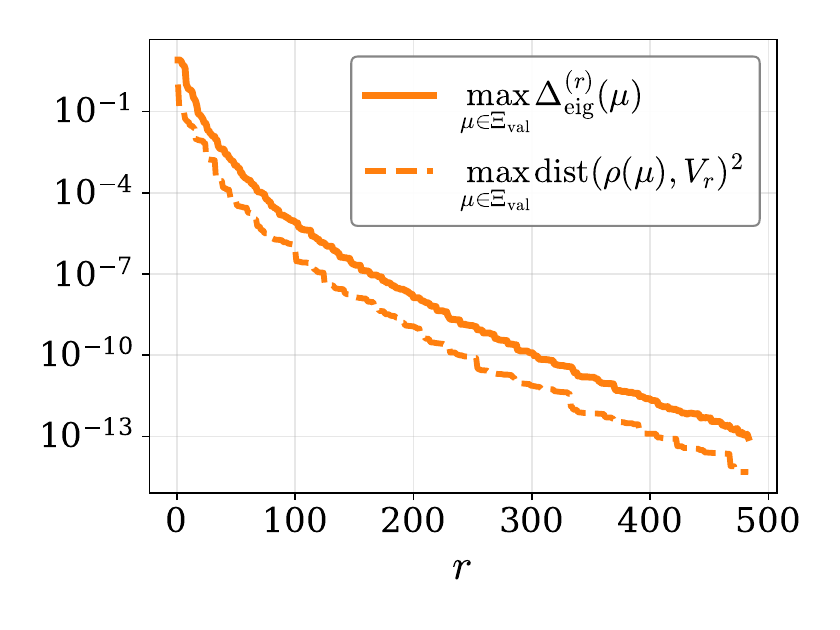}
    \caption{XYZ, EigGreedy.}
    \label{fig:num_effectivity_eig_xyz}
  \end{subfigure}\hfill
  \begin{subfigure}{0.32\textwidth}
    \centering
    \includegraphics[width=\textwidth, trim=0 9mm 0 5mm]{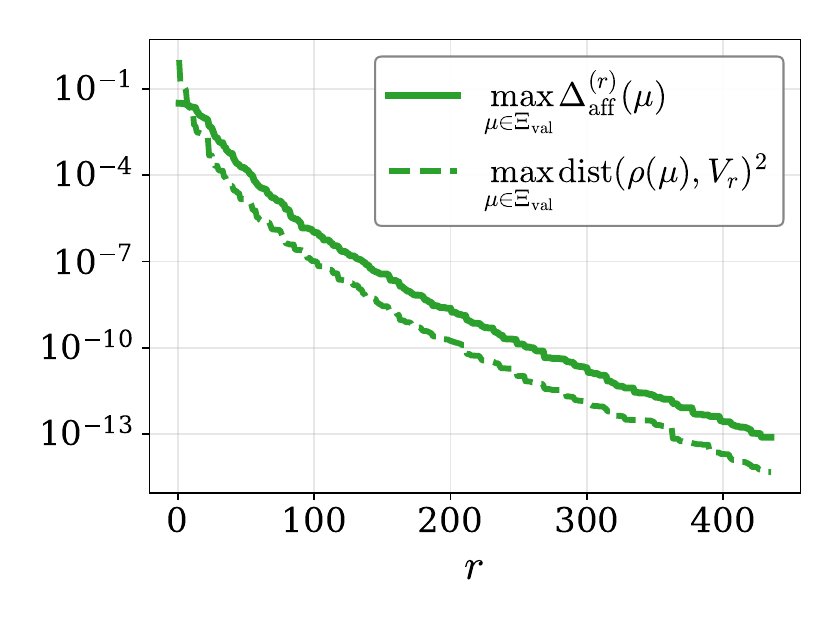}
    \caption{XYZ, AffGreedy.}
    \label{fig:num_effectivity_aff_xyz}
  \end{subfigure}\hfill
  \begin{subfigure}{0.32\textwidth}
    \centering
    \includegraphics[width=\textwidth, trim=0 9mm 0 5mm]{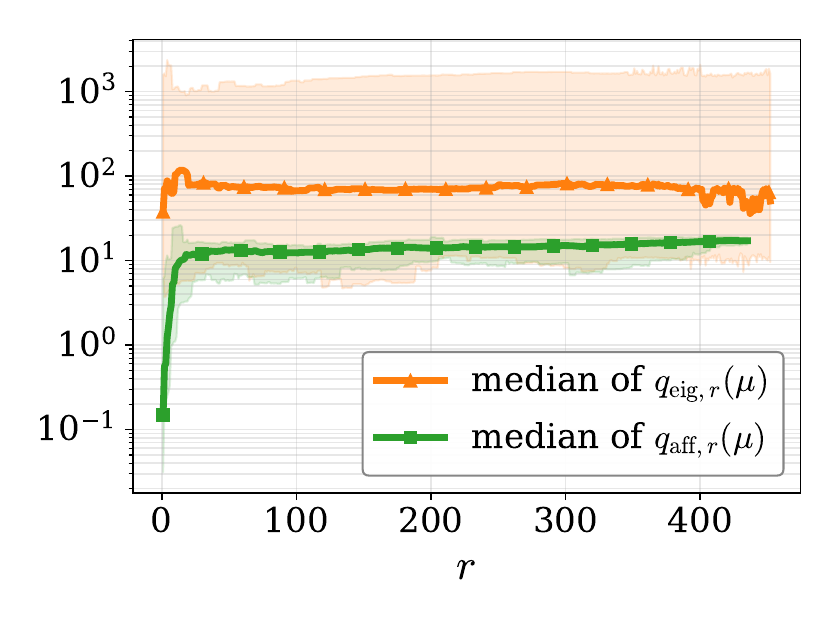}
    \caption{XYZ effectivity.}
    \label{fig:num_effectivity_band_xyz}
  \end{subfigure}
  \caption{Indicator effectivity on the validation grid
  $\Xi_{\mathrm{val}}$ (the $13\times13$ uniform grid) for $L=8$.
  The shaded regions in the right column give its minimum and maximum over the resolvable points.
  }
  \label{fig:num_indicator_effectivity}
\end{figure}

\subsubsection{{Validation of the residual-based state-error bound}}
We next test the state-error estimate from
\Cref{cor:online_approx_steady_state_error} for $L=8$.
At every point of the uniform validation grids, we compute
$\beta(\prmtr)$ and evaluate the local bound
$\|\Lindbladian(\prmtr)\rho_\star^{(r)}(\prmtr)\|/\beta(\prmtr)$.
The left panels of \Cref{fig:num_state_error_bound} show the sampled stability
constant.  Its smallest values are $2.35\times10^{-3}$ for the FH chain and
$7.33\times10^{-2}$ for the XYZ chain.  
The positive, smoothly varying values,
together with \eqref{eq:equivalence_unique_steady_state}, 
provide numerical
evidence for uniqueness of the steady state throughout both parameter domains. 

The right panels compare the true absolute state error with this bound for the
final greedy approximations.  Since the bound is shown on the horizontal axis,
the estimate requires every point to lie on or below the diagonal.  This holds
at all $169$ validation parameters for both greedy methods and also at the two
additional intermediate RB dimensions considered for each method.  At the
final dimensions, the median ratio of the bound to the true error lies between
$97$ and $102$ for the FH chain and is about $13.6$ for the XYZ chain; the
corresponding largest ratios are $2.1\times10^3$ and $45$.  Thus, the
$\beta$-based bound is valid throughout the sampled grids and is markedly
tighter for the XYZ chain.  Compared with the gap--angle lower bound from
\Cref{lem:beta_gap_angle_relation}, using $\beta(\prmtr)$ directly reduces the
median bounds by factors of approximately $1.9$ and $4.8$ for the FH and XYZ
chains, respectively.

\begin{figure}[htbp]
  \centering
  \begin{subfigure}{0.38\textwidth}
    \centering
    \includegraphics[width=\textwidth, trim = 0 13mm 0 12mm]{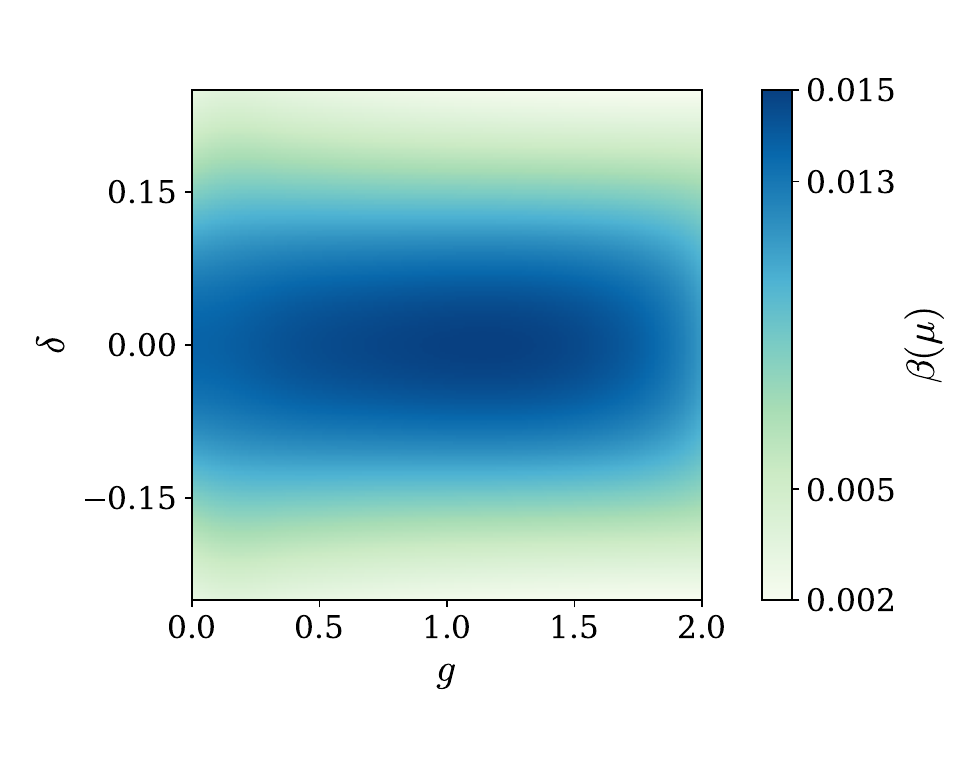}
    \caption{ {FH, sampled $\beta$.}}
    \label{fig:num_cert_beta_fh}
  \end{subfigure}
  \qquad
    \begin{subfigure}{0.32\textwidth}
    \centering
    \includegraphics[width=\textwidth, trim = 0 8mm 0 12mm]{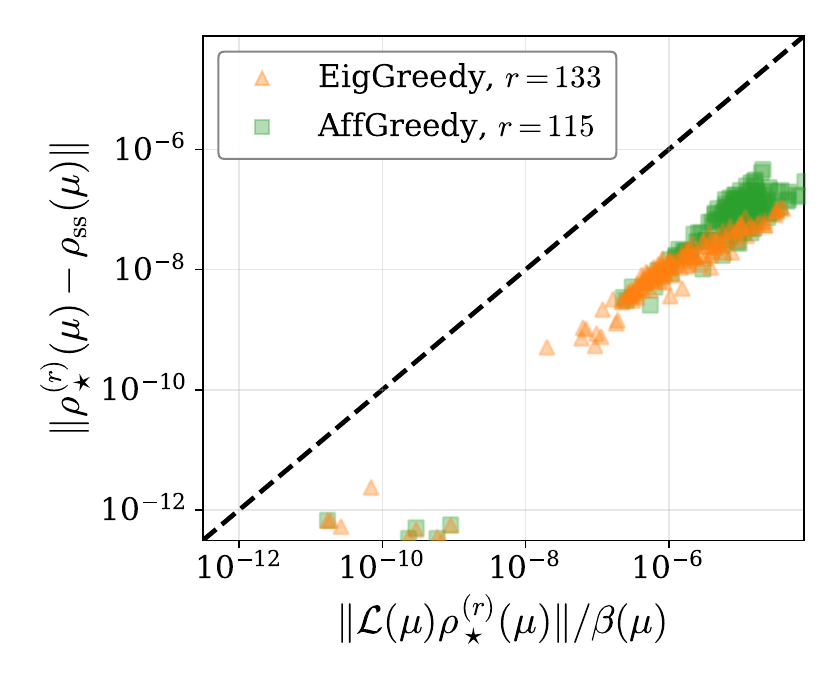}
    \caption{ {FH, error \& upper bound.}}
    \label{fig:num_cert_beta_bound_fh}
  \end{subfigure}

  \vspace{1mm}

  \begin{subfigure}{0.38\textwidth}
    \centering
    \includegraphics[width=\textwidth, trim = 0 13mm 0 5mm]{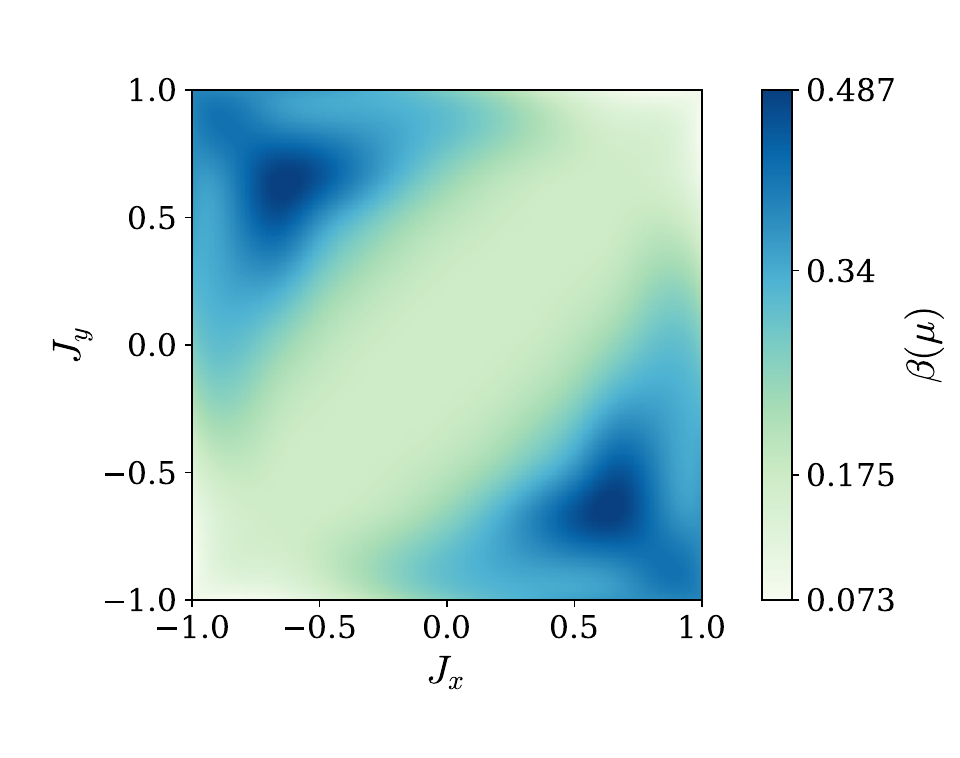}
    \caption{ {XYZ, sampled $\beta$.}}
    \label{fig:num_cert_beta_xyz}
  \end{subfigure}
  \qquad
  \begin{subfigure}{0.32\textwidth}
    \centering
    \includegraphics[width=\textwidth, trim = 0 8mm 0 5mm]{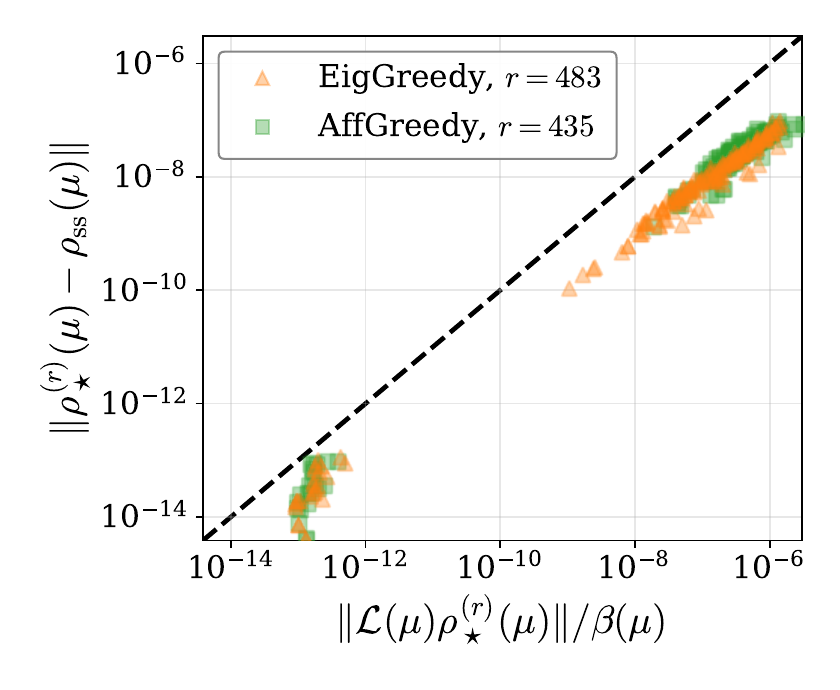}
    \caption{ {XYZ, error \& upper bound.}}
    \label{fig:num_cert_beta_bound_xyz}
  \end{subfigure}
  \caption{Trace-zero stability constants and residual-based state-error bounds
  on the uniform validation grids for $L=8$.  
  The dashed lines in the right panels mark equality.  
  }
  \label{fig:num_state_error_bound}
\end{figure}

\subsection{Online computational performance and physical observables}
\label{subsec:num_performance}

We finally assess the efficiency and accuracy of the online phase at $L=8$,
first for the reconstructed steady states and then for representative physical
observables evaluated directly in the reduced spaces.

\subsubsection{Online cost.}
For each validation parameter, the computation time includes the assembly and
solution of the reduced problem, 
reconstruction of the RB approximation, 
and its symmetrization and trace normalization.
At each validation parameter, the online computation is repeated ten times and
the shortest time is retained.  The reported time is the mean of these
pointwise timings over the validation grid.

\Cref{tab:num_online_cost} reports the resulting mean times and mean pointwise
speedups over the validation grids.
All three methods reconstruct an RB state within a few milliseconds for the
FH chain and within a few tens of milliseconds for the XYZ chain.
Compared with the high-fidelity solves, this gives speedups of order $10^3$
and $10^2$, respectively.

\begin{table}[htbp]
  \centering
  \begin{tabular}{c c c c c}
    \toprule
    Model & Method &  {RB dimension} $r$ & Online time $[\mathrm{ms}]$ & Mean speedup \\
    \midrule
    FH & POD       & $127$ & $2.88$ & $4319$ \\
                   & EigGreedy & $133$ & $3.53$ & $3528$ \\
                   & AffGreedy & $115$ & $2.46$ & $5066$ \\
    \midrule
    XYZ            & POD       & $396$ & $14.1$ & $505$ \\
                   & EigGreedy & $483$ & $25.2$ & $283$ \\
                   & AffGreedy & $435$ & $16.5$ & $431$ \\
    \bottomrule
  \end{tabular}
  \caption{Mean online computation times and mean pointwise speedups on the
  validation grids for the final reduced models at $L=8$.  The online times
  include reconstruction of the RB approximation.
  }
  \label{tab:num_online_cost}
\end{table}

\subsubsection{Reduced evaluation of observables.}
To demonstrate the reduced evaluation of physical observables introduced in \Cref{rem:observable}, 
we choose one local observable for each model.  
For the FH chain, 
we consider the mid-bond current
\(
  J_{\mathrm{mid}}
  \coloneqq
  -\ImgUnit t
  \bigl(c_{j+1}^{\dagger}c_j-c_j^{\dagger}c_{j+1}\bigr),
\)
with $j=L/2$ and the hopping amplitude $t=1$ fixed as in \eqref{eq:num_fh_hamiltonian}.
For the XYZ chain, we take the mid-chain magnetization
$M_{\mathrm{mid}}\coloneqq\sigma_{L/2+1}^{z}$, where
$\sigma_j^z$ is the local Pauli operator defined in
\Cref{subsec:num_setup}.
Their exact expectation values at $\prmtr$ are denoted by 
$\langle J_{\mathrm{mid}}\rangle_\prmtr$ and $\langle M_{\mathrm{mid}}\rangle_\prmtr$, respectively.
The approximated expectation values w.r.t. the RB space $V_r$, 
denoted by $\langle J_{\mathrm{mid}}\rangle_\prmtr^{(r)}$ and $\langle M_{\mathrm{mid}}\rangle_\prmtr^{(r)}$, 
are evaluated directly from the reduced coefficient
vectors as described in \Cref{sec:RBM}, without reconstructing the RB
approximation in the full-order space at any validation point.

\Cref{fig:num_observables} shows the high-fidelity observable maps and the
maximal absolute errors on the validation grids.
Observe that $\langle J_{\mathrm{mid}}\rangle_\prmtr $ 
depends mainly on $|\delta|$, 
whereas $\langle M_{\mathrm{mid}}\rangle_\prmtr $ depends mainly on the
difference $J_x-J_y$.
For all three reduced spaces, the errors decrease by several orders of
magnitude as the  {RB dimension}~$r$ increases.
The maximal error in $\langle J_{\mathrm{mid}}\rangle_{\prmtr}$ is below one percent of
its maximal high-fidelity magnitude at $r=30$, and the corresponding error in
$\langle M_{\mathrm{mid}}\rangle_{\prmtr}$ is below one percent at $r=60$.
At the largest common  {RB dimensions}, the errors are below $4\times10^{-8}$ and
$2\times10^{-7}$, respectively.
For both greedy constructions, the one-time construction of the reduced
observable takes less than $0.07\,\mathrm{s}$, after which each evaluation from
the reduced coefficients takes less than one microsecond.

\begin{figure}[htbp]
  \centering
    \begin{subfigure}{0.32\textwidth}
    \centering
    \includegraphics[width=\textwidth, trim=2mm 9mm 2mm 8mm]{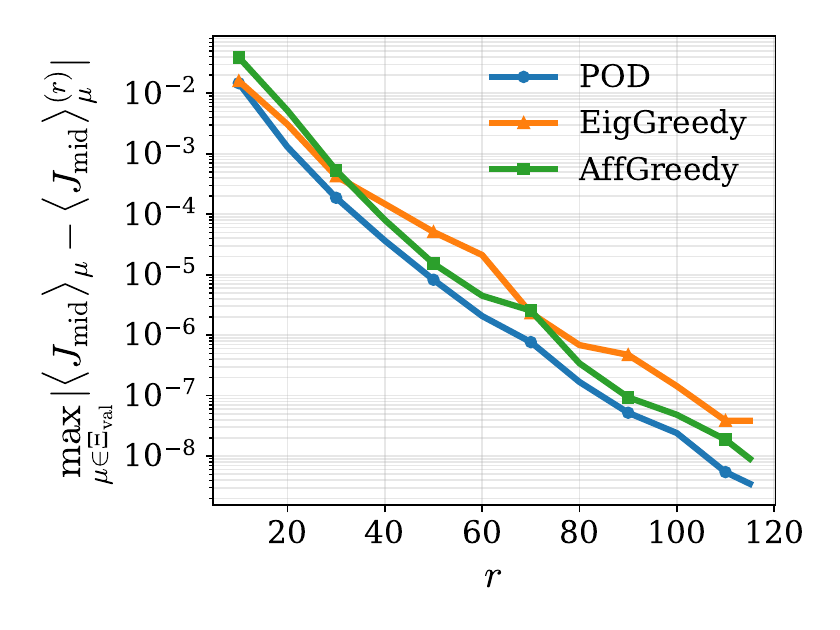}
    \caption{Error in $\langle J_{\mathrm{mid}}\rangle$.}
    \label{fig:num_obserr_fh}
  \end{subfigure}
  \qquad
  \begin{subfigure}{0.32\textwidth}
    \centering
    \includegraphics[width=\textwidth, trim=2mm 9mm 2mm 8mm]{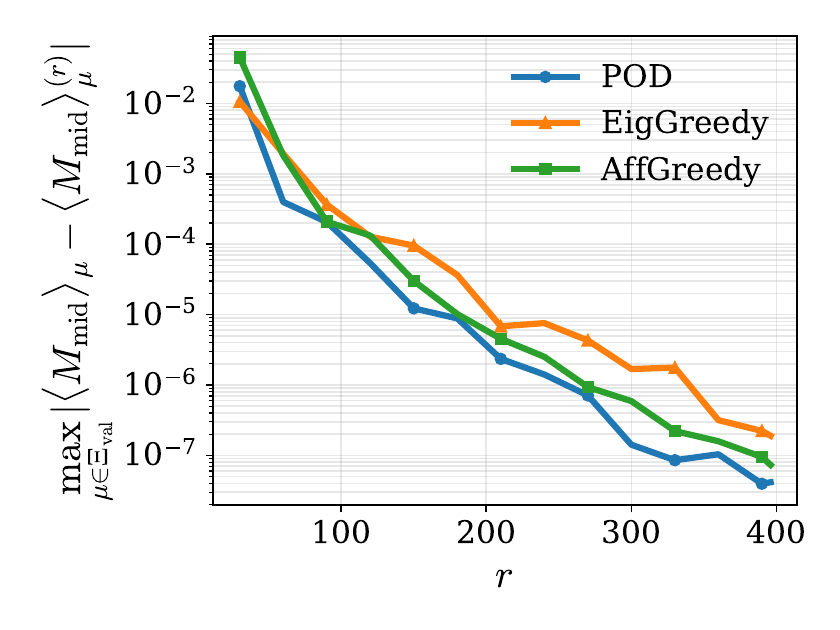}
    \caption{Error in $\langle M_{\mathrm{mid}}\rangle$.}
    \label{fig:num_obserr_xyz}
  \end{subfigure}
  \caption{
  Maximal absolute observable errors 
  on the uniform validation grids for $L=8$.  
  }
  \label{fig:num_observables}
\end{figure}

\section{Conclusion}
\label{sec:conclusion}

In this work, we developed and analyzed an RB framework for the repeated
approximation of parametric Lindbladian steady states; see \Cref{sec:RBM}.
Building on the reduced eigenvalue problem
\eqref{eq:abstract_EVP}, we introduced the trace-constrained residual problem
\eqref{eq:abstract_constraint_res_min} and placed both formulations within a
common residual-driven greedy construction
\Cref{alg:greedy_reduced_basis}.  Both formulations admit an offline--online
decomposition, while positive semidefiniteness can optionally be imposed by
the post-processing procedures from \Cref{subsec:post_processing}.

The analysis exploits the dual interpretation of the steady-state problem as
a parametric eigenvalue and source problem.  Under uniqueness, the
residual-based estimate in
\Cref{cor:online_approx_steady_state_error} controls the state error, while
the indicator equivalences in \Cref{prp:valid_weak_greedy} establish valid
weak-greedy selection for both formulations.  Analytic parameter dependence
then yields generalized-exponential width and greedy convergence; see
\Cref{prop:analytic_width_decay} and \Cref{cor:idealized_greedy_convergence}.  
We also quantify the effect of a finite training set
in \Cref{prop:finite_training_set_convergence}.

The experiments demonstrate compressibility of the solution sets of both tested models
and show comparable sub-exponential decay of the POD spectra and greedy indicators.  The indicators
reliably reflect the projection error, and the residual-based state-error
bounds are valid throughout the sampled parameter grids, although not always
sharp.  The reduced models also provide substantial online speedups and accurate
observable evaluations.  
While POD is generally most accurate at a fixed RB
dimension, AffGreedy avoids computing the full snapshot ensemble, 
and, among the two greedy procedures, 
gives the more favorable balance of offline cost
and approximation accuracy.


\subsection*{Acknowledgments}
Funded by Deutsche Forschungsgemeinschaft (DFG, German Research Foundation) under Germany's Excellence Strategy - EXC 2075 - 390740016. 
We acknowledge the support by the Stuttgart Center for Simulation Science (SimTech).

The authors used generative AI tools during the preparation of this paper for brainstorming,
 theorem revision, improving exposition, numerical implementation and language revision. 
All AI-assisted output was critically reviewed and revised.
The authors take full responsibility for the mathematical arguments and proofs, numerical results, and scientific conclusions
of this work.

\bibliographystyle{macros/plain-doi}
\bibliography{macros/journalabbr,macros/Literature}    

\end{document}